\documentclass[11 pt, final]{article}
\title{Lipschitz 1-connectedness for some solvable Lie groups.}
\author{David Bruce Cohen}

\usepackage[latin1]{inputenc}
\usepackage{tikz}
\usetikzlibrary{shapes,arrows}
\usepackage{graphicx}
\usepackage{amsmath}
\usepackage{amssymb}
\usepackage{amsthm}
\usepackage{epsfig,pinlabel}
\usepackage{amsfonts}
\usepackage{amscd}
\usepackage{mathabx}

\DeclareMathOperator{\NN}{\ensuremath{\mathbb{N}}}

\DeclareMathOperator{\RR}{\ensuremath{\mathbb{R}}}
\DeclareMathOperator{\EE}{\ensuremath{\mathbb{E}}}

\DeclareMathOperator{\Span}{Span}
\DeclareMathOperator{\Kill}{Kill}
\DeclareMathOperator{\image}{image}
\DeclareMathOperator{\Lip}{Lip}
\DeclareMathOperator{\Hom}{Hom}

\DeclareMathOperator{\GL}{GL}
\DeclareMathOperator{\Ad}{Ad}

\def\To{{\rightarrow}}
\def\mfu{{\mathfrak{u}}}
\def\cS{{\mathcal{S}}}

\def\cC{{\mathcal{C}}}
\def\cP{{\mathcal{P}}}
\def\cW{{\mathcal{W}}}
\def\cD{{\mathcal{D}}}
\def\tilg{{\tilde{g}}}
\def\tilx{{\tilde{x}}}
\def\tilr{{\tilde{r}}}
\def\tilu{{\tilde{u}}}
\def\olx{{\overline{x}}}

\def\olu{{\overline{u}}}

\def\hU{{\hat{U}}}

\def\mfu{{\mathfrak{u}}}
\def\mfua{{\mathfrak{u}_{\alpha}}}
\def\mfub{{\mathfrak{u}_{\beta}}}
\def\la{{\langle}}
\def\ra{{\rangle}}

\def\Deps{{D^2_\epsilon}}

\theoremstyle{plain}
\newtheorem{theorem}{Theorem}[section]
\newtheorem{lemma}[theorem]{Lemma}

\newtheorem{proposition}[theorem]{Proposition}
\newtheorem{corollary}[theorem]{Corollary}
\theoremstyle{definition}
\newtheorem*{definition}{Definition}

\begin{document}

\maketitle

\begin{abstract}
A space $X$ is said to be Lipschitz 1-connected if every Lipschitz loop $\gamma:S^1\rightarrow X$ bounds a $O(\text{Lip}(\gamma))$-Lipschitz disk $f:D^2\rightarrow X$. A Lipschitz 1-connected space admits a quadratic isoperimetric inequality, but it is unknown whether the converse is true. Cornulier and Tessera showed that certain solvable Lie groups have quadratic isoperimetric inequalities, and we extend their result to show that these groups are Lipschitz 1-connected.
\end{abstract}

\section{Introduction.}
\paragraph{Lipschitz 1-connectedness.} Consider a complete, simply connected Riemannian manifold $X$ which is homogeneous in the sense that some Lie group acts on $X$ transitively by isometries. It is well known that in such a manifold, every loop admits a Lipschitz filling (see Proposition \ref{proposition:fillingspan}; for a much stronger result see \cite[Corollary 1.4]{my}). A classical way to study the large scale geometry of $X$ is by asking how hard it is to fill loops in $X$.

\begin{definition}
Let $\gamma:S^1\To X$ be a loop. The filling span of $\gamma$ denoted $\Span(\gamma)$, is defined to be
$$\inf\{\Lip(f)|f:D^2\To X;f|_{S^1}=\gamma\}.$$

We say that $X$ is Lipschitz 1-connected if there exists a constant $C$ such that $\Span(\gamma)\leq C\Lip(\gamma)$ for all Lipschitz loops $\gamma:S^1\To X$.
\end{definition}

For instance, Euclidean $n$-dimensional space $\EE^n$ is Lipschitz $1$-connected because one may ``cone off" loops in $\EE^n$: given a loop $\gamma:S^1\To X$ with $\gamma(1)=0$, we may take $f(re^{i\theta})=r\gamma(e^{i\theta})$ to obtain a $1$-Lipschitz filling. A similar argument shows that all CAT(0) manifolds are Lipschitz 1-connected.

\paragraph{Solvable groups.} In this paper, we will be interested in the case where $X$ is some solvable real Lie group $G$ equipped with a left invariant metric. To motivate this, note that Lipschitz 1-connectedness is a QI-invariant (Proposition \ref{proposition:QI}), and that every Lie group is quasi isometric to a solvable Lie group. We will further assume that $G$ has the form $U\rtimes A$, where the following conditions hold.
\begin{itemize}
\item $A$ is abelian Lie group. 
\item $U$ is a closed subgroup of the group of real $n\times n$ upper triangular matrices with diagonal entries equal to $1$ (for some $n$).
\item $A$ and $U$ are contractible.
\end{itemize}
We will now see some examples of groups which are not Lipschitz 1-connected.

\paragraph{Groups of Sol type.} Fix real numbers $t_2 > 1 > t_1 >0$ and consider the group $G$ of matrices of the form
\[\begin{bmatrix}t_1^s & 0 & x \\ 0 & t_2^s & y \\ 0 & 0 & 1\end{bmatrix}\]

where $s,x,y\in \RR$. Note that $G$ decomposes as $U\rtimes A$ as above if we take $A$ to be the diagonal matrices of $G$ and $U$ to be the matrices with diagonal entries equal to $1$. It is known \cite{wp}[Theorem 8.1.3] that there exists loops $\gamma$ in $G$ such that the minimal area of any filling of $\gamma$ is on the order of $\exp(\Lip(\gamma))$. Hence, $G$ is not Lipschitz 1-connected. Groups of this form are called groups of Sol type.

More generally, Cornulier and Tessera have shown that if a group $G=U\rtimes A$ as above surjects onto a group of Sol type, then it has loops of exponentially large area, and hence cannot be Lipschitz 1-connected \cite[Theorem 12.C.1]{ct}. If $G$ surjects onto a group of Sol type we say that $G$ has the SOL obstruction.

\paragraph{Tame groups.} On the other hand, if the conjugation action of some $a\in A$ contracts $U$, then $G=U\rtimes A$ will be Lipschitz 1-connected (Proposition \ref{proposition:tamelip1c}). For $a$ to be a contraction means that there is some compact subset $\Omega$ of $U$ such that for any compact subset $K$ of $U$, some positive power of $a$ conjugates $K$ into $\Omega$. If such $a$ and $\Omega$ exist, $G$ is said to be tame.

\paragraph{The theorem of Cornulier and Tessera.} It is clear that a space which is Lipschitz 1-connected admits a quadratic isoperimetric inequality---i.e., any loop of length $\ell$ must bound a disk of area $O(\ell^2)$. Cornulier and Tessera have given a large class of solvable Lie groups admitting quadratic isoperimetric inequalities, and we shall extend their result to show that these groups are Lipschitz 1-connected.

Given an action of the abelian group $A$ on a vector space $V$, let $V_0\subset V$ be the subspace consisting of vectors $v$ such that $\frac{\log \|a^n v\|}{n}\To 0$ as $n\To \infty$ for all $a\in A$. Their theorem \cite[Theorem F]{ct} states that $G=U\rtimes A$ satisfies a quadratic isoperimetric inequality if the following conditions hold.
\begin{itemize}
\item $(U/[U,U])_0=0$. ($G$ is said to be standard solvable if this condition holds.)
\item $G$ does not surject onto a group of Sol type.
\item $H_2(\mfu)_0=0$, where $\mfu$ is the Lie algebra of $U$ and $H_2$ denotes second Lie algebra homology---see Definition \ref{definition:killing}.
\item $\Kill(\mfu)_0=0$, where the killing module $\Kill(\mfu)$ is the quotient of the symmetric square $\mfu\odot\mfu$ of $\mfu$ by the $A$-subrepresentation spanned elements of the form $[x,y]\odot z-y\odot[x,z]$.
\end{itemize}

\paragraph{Main Theorem.} Our primary objective in this paper is to establish the following theorem, proved as Theorem \ref{theorem:MAIN}, improving Cornulier and Tessera's result to show Lipschitz 1-connectedness. 
\begin{theorem}
\label{theorem:mainintro}
Let $G$ be a group of the form $U\rtimes A$ where $U$ and $A$ are contractible real Lie groups, $A$ is abelian, and $U$ is a real unipotent group (i.e., a closed group of strictly upper triangular real matrices.)

If $(U/[U,U])_0$, $H_2(\mfu)_0$ and $\Kill(\mfu)_0$ are all trivial and $G$ does not surject onto a group of Sol type, then $G$ is Lipschitz $1$-connected.
\end{theorem}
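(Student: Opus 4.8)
The strategy should parallel and upgrade the Cornulier–Tessera proof of the quadratic isoperimetric inequality. Their argument builds fillings of loops by a "combing"/decomposition procedure: one first straightens a loop within the abelian direction $A$ (which is Lipschitz 1-connected on its own, being essentially Euclidean), then one is reduced to filling loops that live in bounded neighborhoods of cosets of $U$, and there the algebraic hypotheses $(U/[U,U])_0 = 0$, $H_2(\mfu)_0 = 0$, $\Kill(\mfu)_0 = 0$, together with the absence of the SOL obstruction, are exactly what is needed to produce fillings of controlled size. To get Lipschitz 1-connectedness rather than merely a quadratic area bound, I would rephrase every step so that it produces not just a disk of small area but a genuinely $O(\Lip(\gamma))$-Lipschitz map $f : D^2 \to G$; the main point is that all the pieces assembled in the Cornulier–Tessera construction are in fact coarsely Lipschitz with uniformly bounded constant, and that the combinatorics of gluing them respects Lipschitz bounds.

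Here is the order in which I would carry this out. Step 1: reduce to the tame-type building blocks. Using Proposition~\ref{proposition:QI} (QI-invariance) and Proposition~\ref{proposition:fillingspan}, reduce to filling \emph{loops of length at most $1$} scaled up, i.e. reduce the general statement to a uniform statement about loops of a fixed small size after rescaling the metric; a loop of Lipschitz constant $L$ becomes, after rescaling $G$'s metric by $1/L$, a $1$-Lipschitz loop in a rescaled group, and one must fill it by a uniformly Lipschitz disk \emph{independently of $L$}. Step 2: split off the $A$-direction. Project $\gamma$ to $A$; since $A$ is contractible abelian (hence bi-Lipschitz to a Euclidean space) it is Lipschitz 1-connected by the coning argument, so one can first homotope $\gamma$, through a uniformly Lipschitz homotopy, to a loop whose $A$-coordinate is constant — reducing to filling loops in a single fiber $U \times \{a\}$, with control depending only on how the $U$-metric distorts under the $A$-action. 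Step 3: fill loops in $U$. This is where the four algebraic hypotheses enter. Decompose $\mfu$ into $A$-weight spaces; the hypothesis that $G$ is standard solvable and has no SOL obstruction guarantees that after conjugating by a suitable element of $A$ one can contract all the "abelianized" directions, while $H_2(\mfu)_0 = 0$ and $\Kill(\mfu)_0 = 0$ control, respectively, the relations among commutators (second homology) and the quadratic-order corrections coming from the Baker–Campbell–Hausdorff formula (the Killing module). Concretely I expect to build the filling by a telescoping product: write the loop as a concatenation of short pieces, each supported in a coset where one weight direction dominates, use contraction to push each piece to where it is cheap to fill, fill it there by coning, and then reassemble — checking at each stage that the BCH error terms that appear are absorbed by the hypotheses $H_2(\mfu)_0 = \Kill(\mfu)_0 = 0$. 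Step 4: assemble the pieces into a single map $D^2 \to G$ and bound its Lipschitz constant, using that a disk glued from $O(1)$ uniformly Lipschitz pieces along a controlled pattern is itself uniformly Lipschitz.

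The main obstacle, and the place where real work is needed beyond citing Cornulier–Tessera, is Step 3: the passage from "an area bound" to "a Lipschitz bound" is not formal. An area-efficient filling produced by a homological or coarse argument need not be Lipschitz at all — it can oscillate wildly while keeping total area small. So I expect the heart of the paper to be a \emph{geometric} reconstruction of the filling in Step 3: rather than invoking $H_2(\mfu)_0 = 0$ abstractly to kill a $2$-cycle, I must exhibit an explicit uniformly Lipschitz nullhomotopy, presumably by an inductive scheme on the lower central series of $\mfu$, where at each stage one fills in the abelianization (cheap, by $(U/[U,U])_0 = 0$ and coning), then corrects the resulting error, which lands in $[\mfu,\mfu]$, and the correction is controllable precisely because $\Kill(\mfu)_0$ (the quadratic BCH obstruction) and $H_2(\mfu)_0$ (the obstruction to writing that error as a sum of commutators in a bounded way) both vanish. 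Managing the interaction between the Lipschitz constant and the number of inductive steps — ensuring the constants do not blow up with the nilpotency class or with $L$ — will be the delicate quantitative point.
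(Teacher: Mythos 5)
Your high-level diagnosis is right: the passage from the Cornulier--Tessera area bound to a Lipschitz bound is not formal, and the algebraic hypotheses must be reinterpreted quantitatively. But the route you sketch has a genuine gap at Step 2. You propose to straighten the $A$-coordinate of the loop and thereby reduce to filling loops inside a single fiber $U \times \{a\}$. This cannot work: $U$ is exponentially distorted in $G$ (Proposition~\ref{proposition:shortcuts}), so a loop of Lipschitz constant $L$ in $G$ can trace a curve of length $\exp(L)$ in the intrinsic metric of the $U$-fiber, and unipotent groups by themselves can have super-quadratic Dehn functions (the $3$-dimensional Heisenberg group, for example, has cubic Dehn function). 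The filling must use the $A$-direction throughout. Accordingly, the paper never reduces to $U$; its building blocks are the standard tame subgroups $G_C = U_C \rtimes A$, each of which still contains all of $A$, and these are Lipschitz $1$-connected (Proposition~\ref{proposition:tamelip1c}) precisely because some $a \in A$ conjugation-contracts $U_C$ --- a phenomenon visible only in $G$, not inside a fiber.

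Your Step 3 is also not what is done, and I do not see how to make it close. You speculate about an induction on the lower central series of $\mfu$ with BCH-error control supplied by $\Kill(\mfu)_0 = H_2(\mfu)_0 = 0$, but these vanishing hypotheses are purely qualitative and by themselves give no length bounds. In the paper they enter exactly once, as a black box, via Cornulier and Tessera's multiamalgam isomorphism $\hat{U}(\cP) \cong U(\cP)$ over arbitrary commutative $\RR$-algebras $\cP$ (Theorem~\ref{theorem:multiamalgam}). The quantitative control comes from a device your plan is missing: working over the algebra $\cP_Y$ of polynomially bounded function families, so a single algebraic identity in $U(\cP_Y)$ produces a whole family of identities in $U$ with uniform polynomial bounds on all constituent word lengths (Lemmas~\ref{lemma:omega} and~\ref{lemma:factoring}). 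The rest of the argument is combinatorial rather than Riemannian: reduce to filling words in a compact generating set (Proposition~\ref{proposition:words}), invoke Gromov's normal-form observation that it suffices to fill $\omega$-triangles (Lemma~\ref{lemma:gromov}), write $\omega(u)$ as a bounded product $\overline{x_1}\cdots\overline{x_k}$ with $x_i \in U_{C_i}$ for a \emph{fixed} sequence of conic subsets, commute $A$-parts to one side (Proposition~\ref{proposition:conjugation}), and fill the resulting relation among the $x_i$ by factoring it in the free product $H_U = \Asterisk_C U_C$ as a bounded product of conjugated amalgamation relations and then applying the filling of freely trivial words (Lemma~\ref{lemma:freehomotopy}). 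None of this combinatorial scaffolding appears in your outline, and without the $\cP_Y$ uniformity trick your Step~3 has no mechanism for preventing the Lipschitz constant from degrading.
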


\paragraph{Quadratic isoperimetric inequality versus Lipschitz 1-connectedness.} As noted above, if $X$ is Lipschitz 1-connected, then it has a quadratic isoperimetric inequality. It is not known whether the converse is true---that is, there are no known examples where $X$ has a quadratic isoperimetric inequality but is not Lipschitz 1-connected.  Recently, Lytchak, Wenger, and Young\cite{ryw} have shown some results about the existence of Holder fillings in spaces admitting quadratic isoperimetric inequalities. 

\subsection{Organization.}
\label{subsection:organization}
This paper is organized as follows. \S\ref{section:preliminaries} recalls some known results about Lipschitz filling in homogeneous manifolds. \S\ref{section:moves} develops a combinatorial language for describing fillings in Lie groups. \S\ref{section:multiamalgam} specializes to solvable Lie groups and reviews the theory of tame groups and Abels's multiamalgam. Finally, we prove our main theorem in \S\ref{section:main}.

\section{Acknowledgments.} We wish to thank Robert Young and Yves Cornulier for productive conversations. This work has been supported by NSF award 1148609.

\section{Preliminaries.}
\label{section:preliminaries}

\subsection{Filling.}
\label{subsection:filling}

In this section, $X$ will be a complete, simply connected Riemannian manifold admitting a transitive Lie group action by isometries. We will collect several facts about filling loops in $X$.  Everything in this section is both trivial and well known, but it seems easier to write the proofs than to find them in the literature. The key facts we will prove are as follows.

\begin{itemize}
\item Proposition \ref{proposition:smallscale} shows that $X$ is Lipschitz 1-connected on a small scale.  That is, there is some constant $D$ such that $\Span(\gamma)=O(\Lip(\gamma))$ when $\Lip(\gamma)<D$.
\item Corollary \ref{corollary:finitespan} shows that all loops in $X$ have Lipschitz fillings.
\item Proposition \ref{proposition:fillingspan} proves the existence a ``filling span function"---meaning that there is a function $C(X,R)$ such that $\Span(\gamma)<C(X,\Lip(\gamma))$ for all Lipschitz loops $\gamma:S^1\To X$.
\item Proposition \ref{proposition:QI} shows that Lipschitz 1-connectedness is a QI-invariant in this setting.
\end{itemize}

\subsection{Templates.}

Let $D^2$ be the unit disk, equipped with the Euclidean metric.  Purely as a matter of convenience, we will think of $D^2$ and the unit circle $S^1$ as subsets of $\mathbb{C}$. We will need some convenient cellular decompositions of $D^2$.

\begin{figure}[t]
\labellist
\small\hair 2pt

\endlabellist

\centering
\centerline{\psfig{file=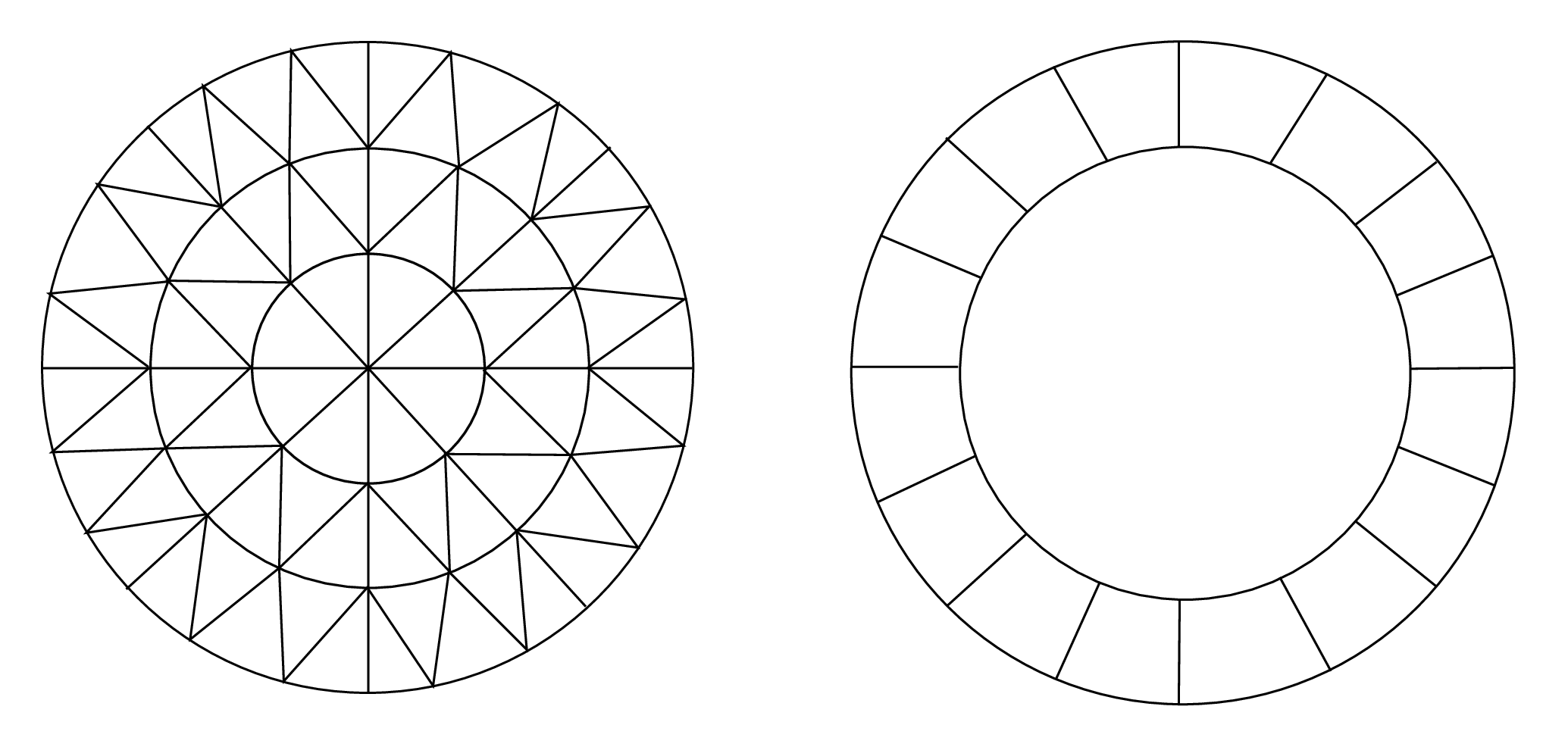,scale=60}}
\caption{On the right, a decomposition of the unit disk into $O(\epsilon^{-2})$ triangles uniformly bilipschitz to $\Deps$ as in Proposition \ref{proposition:web}. On the right, a decomposition of $D^2$ into a central disk together with $O(\epsilon^{-1})$ ``sectors" uniformly bilipschitz to $\Deps$ as in Proposition \ref{proposition:sun}}
\label{figure:templates}
\end{figure}

\begin{definition}
Let $\Deps$ be the Euclidean disk of radius $\epsilon$.
\end{definition}

\begin{proposition}
\label{proposition:web}
There exists a constant $C$ such that for any $0<\epsilon<1$ there is a triangulation of $D^2$ into at most $C\epsilon^{-2}$ triangles which are $C$-bilipschitz to $\Deps$.
\end{proposition}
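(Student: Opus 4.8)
The plan is to decorate $D^2$ with a ``spiderweb'': cut it into concentric annuli of equal width $\asymp\epsilon$, then subdivide each annulus into curved quadrilaterals by equally spaced radial spokes, choosing the number of spokes in each annulus so that every resulting cell has all of its sides comparable to $\epsilon$. Concretely, set $n=\lceil\epsilon^{-1}\rceil$ and $\delta=1/n$, so $\epsilon/2<\delta\le\epsilon$, and let $A_k=\{\,1-(k+1)\delta\le|z|\le 1-k\delta\,\}$ for $0\le k\le n-1$; these cover $D^2$, and the innermost one is the disk $A_{n-1}=\{|z|\le\delta\}$. For $0\le k\le n-2$ the annulus $A_k$ has inner radius $\rho_k:=1-(k+1)\delta\ge\delta$; cut it into $m_k:=\lceil 2\pi\rho_k/\delta\rceil$ congruent curved quadrilaterals $S_{k,j}$ ($0\le j<m_k$) by the radial segments at angles $2\pi j/m_k$. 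Keep $A_{n-1}$ as a single cell (or, if literal triangles are wanted, split it into three curved sectors, handled below exactly like the $S_{k,j}$).

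For the cardinality bound I would use $n\delta=1$ to compute
\[\sum_{k=0}^{n-2}\rho_k=(n-1)-\delta\cdot\frac{(n-1)n}{2}=\frac{n-1}{2},\]
and then, since $m_k\le 2\pi\rho_k/\delta+1$, the total number of cells is at most $1+\sum_{k=0}^{n-2}m_k\le 1+\frac{2\pi}{\delta}\cdot\frac{n-1}{2}+(n-1)=1+\pi n(n-1)+(n-1)\le(\pi+2)n^2$, which is $O(\epsilon^{-2})$ because $\delta>\epsilon/2$ forces $n<2\epsilon^{-1}$.

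For the shape estimate I would observe that (up to a rotation) $S_{k,j}$ is the image of the Euclidean rectangle $\mathcal R_{k,j}=[0,\delta]\times[0,\,2\pi\rho_k/m_k]$ under $\psi(a,t)=(\rho_k+a)e^{it/\rho_k}$, whose differential has singular values $1$ and $(\rho_k+a)/\rho_k\in[1,2]$ (here $a\le\delta\le\rho_k$), so $\psi$ is $2$-bilipschitz onto $S_{k,j}$. Since the ceiling gives $\delta/2\le 2\pi\rho_k/m_k\le\delta$ and $\epsilon/2<\delta\le\epsilon$, the rectangle $\mathcal R_{k,j}$ has both side lengths in $[\epsilon/4,\epsilon]$; a rectangle of aspect ratio at most $2$ with side lengths $\Theta(\epsilon)$ is uniformly bilipschitz to $\Deps$ with an absolute constant $C_0$. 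Composing, each $S_{k,j}$ is $2C_0$-bilipschitz to $\Deps$, and the central disk $\{|z|\le\delta\}$ is $(\epsilon/\delta)\le2$-bilipschitz to $\Deps$ outright; taking $C$ to be the larger of $2C_0$ and the constant from the cardinality bound completes the argument.

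There is no genuine obstacle here; the only thing that needs care is that \emph{all} of these bilipschitz constants be bounded simultaneously, and this is exactly what dictates the construction: the annuli must have width $\asymp\epsilon$, an annulus of inner radius $\rho$ must receive $\asymp 2\pi\rho/\epsilon$ spokes (so that its cells have angular extent $\asymp\epsilon$ rather than $\asymp\epsilon/\rho$), and everything within radius $\delta$ of the origin must be swept into one central cell rather than continuing the pattern inward, which would produce cells of ever-worsening aspect ratio. If one insists on a literal simplicial triangulation, I would refine along each circle $\{|z|=1-k\delta\}$ by adjoining the spoke endpoints of both adjacent annuli as extra vertices --- since consecutive spoke counts $m_k,m_{k+1}$ differ by $O(1)$ while both exceed $6$, each cell acquires only $O(1)$ extra sides --- and then cone each resulting curved polygon from one of its vertices; this multiplies the cell count by a bounded factor, and each piece remains $O(1)$-bilipschitz to $\Deps$.
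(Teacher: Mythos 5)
Your proposal is correct and, as far as one can tell, matches the intended construction: the paper declares the proof ``left to the reader,'' and the accompanying figure depicts exactly the concentric-annuli-with-spokes (``spiderweb'') decomposition you describe. Two small remarks. First, to produce literal triangles you can simply cut each curved quadrilateral $S_{k,j}$ along a diagonal rather than coning from a vertex; this sidesteps the need to count how many spoke endpoints from neighboring annuli land on each arc, though your $O(1)$ count for that is also fine since consecutive spoke numbers differ by about $2\pi$. Second, your bound $m_k - m_{k+1} = O(1)$ (and hence $m_{k-1}/m_k \le 2$ for all $k$, including the innermost annulus where $m_{n-2} = 7$) is exactly the point that makes the refined decomposition honest, so it was worth stating; the computation is right. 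The rest of the details --- the singular-value estimate for $\psi(a,t) = (\rho_k + a)e^{it/\rho_k}$, the side-length bounds $\delta \in (\epsilon/2,\epsilon]$ and $2\pi\rho_k/m_k \in [\delta/2,\delta]$, and the telescoping sum giving $\sum \rho_k = (n-1)/2$ --- all check out.
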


See the left side of Figure \ref{figure:templates} for a decomposition of this type. The proof of Proposition \ref{proposition:web} is left to the reader.

\paragraph{Using templates.} Proposition \ref{proposition:web} will help us fill loops in the following way. Suppose that $\cC$ is a class of loops in $X$ such that $\Span(\gamma)=O(\Lip(\gamma))$ for $\gamma\in\cC$---typically, $\cC$ will consist of loops with small Lipschitz constant. Now, given some other loop $\gamma:S^1\To X$, we may wish to find a $O(\Lip(\gamma))$-Lipschitz filling $f:D^2\To X$ of $\gamma$. We can often use the following abstract strategy to construct $f$.
\begin{itemize}
\item For some sufficiently small $\epsilon$, take a decomposition of $D^2$ as in Proposition \ref{proposition:web}. For a 2-cell $\Delta$ of this decomposition, let $\psi_\Delta:\Delta\To\Deps$ be a $C$-bilipschitz map, where $C$ is the universal constant guaranteed by the proposition.
\item Take $f$ to be $\gamma$ on $\partial D^2$, and extend $f$ over the 1-skeleton of the decomposition in such a way that $\Lip(f)=O(\Lip(\gamma))$ and the restriction of $f$ to the boundary of each 2-cell represents an element of $\cC$. That is, we want $f$ to be such that for every 2-cell $\Delta$, the map $\gamma_\Delta:S^1\To X$ given by
$$\gamma_\Delta:e^{i\theta}\mapsto f|_{\partial\Delta}\circ\psi_\Delta^{-1}(\epsilon e^{i\theta})$$
satisfies $\gamma_\Delta\in\cC$.
\item For each 2-cell $\Delta$, observe that $\Lip(\gamma_\Delta)=O(\epsilon\Lip(\gamma))$, so there exists a $O(\epsilon\Lip(\gamma))$-Lipschitz filling $f_\Delta:D^2\To X$ of $\gamma_\Delta$.
\item Extend $f$ over $\Delta$ with Lipschitz constant $O(\Lip(\gamma))$ by taking, for each 2-cell $\Delta$,
$$f|_\Delta(z)=f_\Delta\left(\frac{1}{\epsilon}\psi_\Delta(z)\right).$$
\end{itemize}
The nontrivial part of this type of argument comes when we try to extend $f$ over the 1-skeleton with the desired properties, so we will typically suppress the other details. 

\begin{proposition}
\label{proposition:sun}
There exists a constant $C$ such that, for each $0<\epsilon<1$, the unit disk $D^2$ may be cellularly decomposed into an inner disk of radius $1-\epsilon$, surrounded by an annular region divided into 2-cells (we call these sectors) with the following properties.
\begin{itemize}
\item Each sector is bounded by two radial line segments, an arc of $\partial D^2$, and an arc of the boundary of the inner disk.
\item Sectors are $C$-bilipschitz to $\Deps$.
\item The number of sectors is between $\frac{1}{C\epsilon}$ and $\frac{C}{\epsilon}$.
\end{itemize}
\end{proposition}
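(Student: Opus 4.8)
The plan is to write the decomposition down explicitly and then verify the three bullets, of which only the bilipschitz estimate is more than a formality. Put $N=\lceil 2\pi/\epsilon\rceil$, take the inner disk to be $\{|z|\le 1-\epsilon\}$, and cut the closed annulus $\{1-\epsilon\le|z|\le1\}$ into the $N$ congruent sectors $S_k=\{re^{i\theta}:1-\epsilon\le r\le1,\ 2\pi k/N\le\theta\le2\pi(k+1)/N\}$, $k=0,\dots,N-1$. This is plainly a cellular decomposition, the first bullet is built into the definition, and since $\epsilon<1$ we have $2\pi/\epsilon\le N\le(2\pi+1)/\epsilon$, so $C=2\pi+1$ makes the count lie between $1/(C\epsilon)$ and $C/\epsilon$. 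For the bilipschitz bullet, rotational symmetry lets us fix one sector $S:=S_0$; write $\delta=2\pi/N$ for its angular width, so that $\frac{2\pi}{2\pi+1}\epsilon\le\delta\le\epsilon$. I would treat two ranges of $\epsilon$ separately.

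\emph{The range $\epsilon\le 1/2$.} Here $r$ stays in $[1/2,1]$ on $S$, so in the ``unrolled'' coordinates $u=(r-(1-\epsilon))/\epsilon$, $v=\theta/\delta$ the Euclidean metric $dr^2+r^2\,d\theta^2$ becomes $\epsilon^2\,du^2+r^2\delta^2\,dv^2$, whose coefficients lie between $c\epsilon^2$ and $\epsilon^2$ for a universal $c\in(0,1)$ (the $du^2$ coefficient is exactly $\epsilon^2$, and $r^2\delta^2\in[c\epsilon^2,\epsilon^2]$). Thus the unrolling map is a uniformly bilipschitz homeomorphism of $S$ onto the square of side $\epsilon$, and the latter is uniformly bilipschitz to $\Deps$ by a fixed map rescaled by $\epsilon$; composing yields the claim with a universal constant in this range.

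\emph{The range $1/2<\epsilon<1$.} Now $N$ runs over the finite set $\{7,\dots,13\}$, $\Deps$ is uniformly bilipschitz to the unit disk, and $S$ is a circular sector of angle $\delta$ out of whose vertex a disk of radius $\lambda:=1-\epsilon\in(0,1/2)$ has been excised. I claim such truncated sectors are uniformly bilipschitz (as $\lambda\to 0$) to the full sector $\widehat S$ of angle $\delta$ and radius $1$; since $\widehat S$ is uniformly bilipschitz to the disk (rescale the angle to $\pi$ and apply a fixed half-disk-to-disk map), this finishes the case. For $\lambda$ bounded below the claim is immediate by compactness of the parameter, so take $\lambda$ small. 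Decompose both $\widehat S$ and $S$ into the dyadic annular pieces $\{2^{-m-1}\le r\le2^{-m}\}\cap\widehat S$: these are all similar to one fixed truncated sector of aspect ratio $\asymp 1$, hence uniformly bilipschitz to a single model, and $\widehat S$ and $S$ share all such pieces with $2^{-m}\ge 2^{-K}$, where $K$ is chosen so that $2^{-K}/\lambda$ lies in a fixed compact subinterval of $(1,\infty)$. Put the identity on this shared part; on the core $\{r\le 2^{-K}\}$ one has, after rescaling by $2^{K}$, a truncated sector whose truncation ratio is bounded away from $0$ and $1$, and this maps to the full sector of radius $2^{-K}$ by a bilipschitz homeomorphism of uniformly bounded constant that fixes the circle $\{r=2^{-K}\}$ pointwise. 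Gluing the two maps along that circle produces the desired bilipschitz homeomorphism $S\to\widehat S$.

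The one place where care is genuinely needed is this last range: the bilipschitz constant of the reassembled map must not deteriorate as the number of dyadic pieces grows, i.e. as $\lambda\to 0$. This is exactly why the annular pieces are all taken similar to a single bounded-geometry model — so the piecewise map is governed by one constant — and why the core-truncation ratio is kept away from $1$ and $\infty$, so that the core neither pinches to a cusp nor thins into a sliver. One also invokes the standard fact that a homeomorphism which is bilipschitz on each of two uniformly quasiconvex pieces and continuous across their common boundary arc is bilipschitz on the union with a controlled constant (the pieces here are uniformly quasiconvex, with constant $\le\pi/2$, since a chord of the excised disk may be replaced by the shorter boundary arc of a sector of angle $<\pi$), together with the easy explicit construction of the core map fixing the outer circle. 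Everything else is routine bookkeeping.
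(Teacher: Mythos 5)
The paper offers no argument here (the proof is explicitly left to the reader), so there is nothing to compare against; your write-up simply supplies the omitted details, using the obvious decomposition into $\lceil 2\pi/\epsilon\rceil$ congruent sectors, and I find it correct. The count and the range $\epsilon\le 1/2$ are fine: the pointwise comparison of the metric coefficients under unrolling does yield a bilipschitz map because both the truncated sector and the square are uniformly quasiconvex, a point you use implicitly there and make explicit only later. The only genuine content is the range $\epsilon\to 1$, where the sector degenerates to a full circular sector with a small disk excised at the vertex, and your two-piece construction (identity away from the vertex, plus a uniformly bilipschitz ``core'' map on a rescaled truncated sector whose truncation ratio lies in a fixed compact subinterval of $(0,1)$, fixing the interface circle) is a correct and natural way to handle it; the dyadic decomposition you set up is actually superfluous, since the final map only ever uses the two pieces glued along one circle. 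Two small caveats: the gluing lemma should be stated with quasiconvexity of the two \emph{unions} (source and target) together with the fact that the interface arc separates the pieces --- quasiconvexity of the pieces alone does not suffice (glue two convex sets at a small interface and fold one back against the other) --- though in your application $S$ and the full sector are indeed uniformly quasiconvex, so nothing breaks; and ``immediate by compactness'' and ``easy explicit construction of the core map'' deserve a sentence each (nearby parameters give truncated sectors bilipschitz to one another with constant near $1$, which makes the optimal constant locally bounded, and the core map can be built by flattening the inner arc to its chord and then comparing two convex regions), but these are routine and do not constitute gaps.
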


See the right side of Figure \ref{figure:templates} for a decomposition of this type.  The proof of Proposition \ref{proposition:sun} is left to the reader. This proposition will typically used to convert a filling of a loop $\tilde{\gamma}$ to a filling $f$ of a nearby loop $\gamma$, by taking $f$ restricted to the inner disk to be a slightly rescaled filling of $\tilde{\gamma}$.

\subsection{Basic results on filling in homogeneous manifolds.}
\begin{proposition}
\label{proposition:smallscale}
Let X be a simply connected, complete homogeneous Riemannian manifold. There exist constants $C(X)$ and $D(X)$ such that the following hold.
\begin{itemize}
\item If $x,y\in X$ and $d(x,y)\leq D(X)$, then there is a unique geodesic in $X$ connecting $x$ to $y$.
\item Suppose $\gamma:S^1\To X$ is such that $0<\Lip(\gamma)<D(X)$.  Then $\Span(\gamma)<C(X)\Lip(\gamma)$.
\end{itemize}
\end{proposition}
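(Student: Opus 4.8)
The plan is to reduce both assertions to uniform forms of standard facts about the exponential map, and then to fill small loops by coning them off inside a single normal-coordinate chart.

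First I would extract the uniform constants from homogeneity. Since the isometry group acts transitively, the injectivity radius of $X$ is a positive constant $\rho=\rho(X)$, so for every $p$ the map $\exp_p$ is a diffeomorphism of the $\rho$-ball in $T_pX$ onto $B(p,\rho)$; in particular any two points at distance less than $\rho$ are joined by a unique minimizing geodesic, which settles the first bullet as soon as $D(X)<\rho$. Transitivity also forces the curvature tensor to have constant pointwise norm, so Rauch comparison gives a constant $K=K(X)\geq 1$ and a radius $\rho_0=\rho_0(X)\in(0,\rho]$ such that, for every $p$, $\exp_p$ restricts to a $K$-bilipschitz map between the $\rho_0$-ball of $T_pX$ and $B(p,\rho_0)$; recall moreover that $\exp_p$ is a radial isometry, so $\|\exp_p^{-1}(y)\|=d(p,y)$ there. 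I would then set $D(X):=\rho_0/(2\pi)$.

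Now fix a loop $\gamma$ with $L:=\Lip(\gamma)<D(X)$ and put $x_0:=\gamma(1)$. The image of $\gamma$ has diameter at most $\pi L$, hence lies in $B(x_0,\pi L)\subseteq B(x_0,\rho_0)$, so $v:=\exp_{x_0}^{-1}\circ\gamma\colon S^1\To T_{x_0}X$ is defined, satisfies $\|v\|_\infty\leq \pi L$, and is $KL$-Lipschitz. Define $f\colon D^2\To X$ by $f(re^{i\theta}):=\exp_{x_0}\!\bigl(r\,v(\theta)\bigr)$; this is continuous (it equals $x_0$ at the origin), restricts to $\gamma$ on $S^1$, and factors as $\exp_{x_0}\circ g$ with $g(re^{i\theta}):=r\,v(\theta)$ mapping $D^2$ into the $\rho_0$-ball of $T_{x_0}X$. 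It remains to bound $\Lip(g)$, which is the one genuine computation: for $p_j=r_je^{i\theta_j}$ with $r_1\leq r_2$ and $|\theta_1-\theta_2|\leq\pi$, the identity $r_2v(\theta_2)-r_1v(\theta_1)=r_1\bigl(v(\theta_2)-v(\theta_1)\bigr)+(r_2-r_1)v(\theta_2)$ together with the elementary planar inequalities $|r_1-r_2|\leq\|p_1-p_2\|$ and $r_1|\theta_1-\theta_2|\leq\tfrac{\pi}{2}\|p_1-p_2\|$ (the latter from $\|p_1-p_2\|\geq 2\sqrt{r_1r_2}\,\sin\tfrac{|\theta_1-\theta_2|}{2}$ and $\sin x\geq\tfrac{2}{\pi}x$ on $[0,\tfrac\pi2]$) gives $\Lip(g)\leq\tfrac\pi2\Lip(v)+\|v\|_\infty=O(L)$. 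Hence $\Lip(f)\leq K\,\Lip(g)=O(L)$, so $\Span(\gamma)\leq\Lip(f)\leq C(X)\Lip(\gamma)$ with $C(X)$ depending only on $K$.

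The main obstacle here is purely bookkeeping: pinning down the uniform constants $\rho$, $\rho_0$ and $K$ from homogeneity (injectivity radius together with Rauch comparison) and keeping the polar-coordinate cone estimate honest near the center of the disk, where the naive bound ``angular arclength $\leq$ chord length'' degenerates and one must instead use the geometric-mean estimate above.
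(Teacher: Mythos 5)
Your proposal is correct and follows essentially the same route as the paper: both arguments fill the small loop by coning it off inside a uniformly bilipschitz normal-coordinate chart at the basepoint (the paper's bilipschitz map $\psi$ is exactly such a chart), the only differences being that you obtain unique short geodesics from the uniform injectivity radius rather than from a CAT($\kappa$) comparison, and you carry out the polar-coordinate cone estimate explicitly where the paper simply asserts the Lipschitz bound. There is no gap.
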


\begin{proof}
It is clear that there is a uniform upper bound on sectional curvatures of $X$, since $X$ is homogeneous. Therefore, $X$ is a CAT($\kappa$) space for some $\kappa\geq 0$ \cite{bh}[Theorem 1A.6]. This implies \cite{bh}[Proposition 1.4(1)] that there is some constant $D(\kappa)$ depending only on $\kappa$ such that points of $X$ separated by less than $D(\kappa)$ are connected by a unique geodesic, so taking $D(X)<D(\kappa)$ ensures unique geodesics.

By the existence of normal coordinates in Riemannian manifolds \cite[Proposition 8.2]{kn}, there exists a bilipschitz map $\psi:U\To V$ where $U$ is a neighborhood in $X$ and $V$ a neighborhood in $\RR^{\dim(X)}$. Take $D>0$ small enough that there is some $x\in U$ with $U\supset B_D(x)$ and $\psi(B_D(X))\subset V_0\subset V$ for some convex set $V_0$. Now let $\gamma:S^1\To X$ be $D$-Lipschitz with $\gamma(1)=x$. Without loss of generality, $\psi(x)=0$, and we may fill $\psi\circ\gamma$ by coning off--that is, we define a $\Lip(\psi)\Lip(\gamma)$-Lipschitz filling $f_0:D^2\To V_0$ of $\psi\circ\gamma$ by letting $f_0(re^{i\theta})=r\psi(\gamma(e^{i\theta}))$. If we let $f=\psi^{-1}\circ f_0$, then $f$ is a filling of $\gamma$ and $\Lip(f)\leq \Lip(\psi^{-1})\Lip(\psi)\Lip(\gamma)$. Taking $D(X)<D$ and $C(X)>\Lip(\psi^{-1})\Lip(\psi)$, the result follows.
\end{proof}

\begin{corollary}
\label{corollary:finitespan}
For all Lipschitz maps $\gamma:S^1\To X$, we have $\Span(\gamma)<\infty$.
\end{corollary}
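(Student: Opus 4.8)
The plan is to exhibit \emph{some} Lipschitz filling of $\gamma$, with no control whatsoever on its Lipschitz constant; finiteness of $\Span(\gamma)$ follows at once. The only ingredients needed are that $X$ is simply connected and the small-scale statement, Proposition \ref{proposition:smallscale}.

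First I would use simple connectedness to fix a continuous map $h\colon D^2\To X$ with $h|_{S^1}=\gamma$. Since $D^2$ is compact, $h$ is uniformly continuous, so there is a $\delta>0$ such that $h$ carries any subset of $D^2$ of Euclidean diameter $\le\delta$ into a ball of some small radius $r_0$, where $r_0$ is chosen once and for all so small that $r_0<D(X)$ and that every ball of radius $3r_0$ in $X$ lies inside one of the bilipschitz normal-coordinate charts used in the proof of Proposition \ref{proposition:smallscale} (such charts exist around every point with uniform bilipschitz constants, by homogeneity of $X$); shrinking $\delta$ if necessary, I would also ask that $\gamma$ send every arc of $S^1$ of diameter $\le\delta$ into an $r_0$-ball. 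Then I would fix $\epsilon$ small enough that every triangle of the triangulation of $D^2$ furnished by Proposition \ref{proposition:web} has Euclidean diameter $<\delta$.

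Now I would assemble a map $f\colon D^2\To X$ skeleton by skeleton. Set $f=\gamma$ on $\partial D^2$ and $f=h$ at every interior vertex of the triangulation; on each interior edge $[a,b]$ let $f$ be the constant-speed parametrization of the unique geodesic from $f(a)$ to $f(b)$, which exists since $d(f(a),f(b))<r_0<D(X)$. Once $\epsilon$ is fixed this $f$ is Lipschitz on the $1$-skeleton, with a (possibly gigantic but) finite constant. For a $2$-cell $\Delta$ with $C$-bilipschitz map $\psi_\Delta\colon\Delta\To\Deps$, the loop $\gamma_\Delta\colon e^{i\theta}\mapsto f|_{\partial\Delta}\circ\psi_\Delta^{-1}(\epsilon e^{i\theta})$ traces out $f(\partial\Delta)$, a concatenation of at most three geodesic segments and $\gamma$-subarcs all within distance $3r_0$ of $f(a)$, hence a Lipschitz loop contained in a single normal-coordinate chart. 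Coning it off inside the convex coordinate image exactly as in the proof of Proposition \ref{proposition:smallscale} produces a Lipschitz filling $f_\Delta$ of $\gamma_\Delta$, and the rescaling recipe $f|_\Delta(z)=f_\Delta\left(\tfrac1\epsilon\psi_\Delta(z)\right)$ extends $f$ over $\Delta$ compatibly with its boundary values. Gluing over the finitely many cells yields a Lipschitz map $f\colon D^2\To X$ with $f|_{S^1}=\gamma$, whence $\Span(\gamma)\le\Lip(f)<\infty$.

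There is no genuine obstacle here beyond bookkeeping; the one point to keep in mind is that replacing the merely continuous $h$ by geodesics along a \emph{fixed} fine mesh costs nothing for finiteness but inflates the Lipschitz constant by a large, mesh-dependent factor. That is precisely why this argument says nothing about the \emph{size} of the filling: producing a filling with $\Lip(f)=O(\Lip(\gamma))$ for \emph{every} loop is the hard work carried out later in the paper.
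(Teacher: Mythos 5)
Your proof is correct and follows essentially the same route as the paper's: take a continuous filling guaranteed by simple connectedness, use Proposition \ref{proposition:web} to triangulate $D^2$ at a fine enough scale (controlled by the uniform continuity of the continuous filling, which the paper establishes by a compactness/contradiction argument rather than quoting it directly), put geodesics on the edges, and fill each small triangle via Proposition \ref{proposition:smallscale}. If anything your version is slightly more careful than the paper's in insisting that $f$ equal $\gamma$ exactly on all of $\partial D^2$ rather than merely at the boundary vertices, but this is a matter of bookkeeping, not a different idea.
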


\begin{proof}
Let $\tilde{f}:D^2\To X$ be a (continuous) filling of a Lipschitz $\gamma:S^{1}\To X$. We must find a Lipschitz filling $f$ of $\gamma$. For $\epsilon>0$, let $\tau_\epsilon$ be the triangulation of $D^2$ given in Proposition \ref{proposition:web}, and for each 2-cell $\Delta$ of $\tau_\epsilon$, let $\psi_\Delta:\Delta\to\Deps$ be the $C$-bilipschitz map guaranteed by the proposition. Let $F(\epsilon)$ be the largest value of $d(\tilde{f}(x),\tilde{f}(y))$ such that $x,y$ are adjacent vertices of $\tau_\epsilon$.  If $F(\epsilon)\To 0$ as $\epsilon\To 0$, then Proposition \ref{proposition:smallscale} lets us produce a Lipschitz filling $f$ of $\gamma$ as follows.  Set $f(x)=\tilde{f}(x)$ for each vertex $x$ of $\tau_\epsilon$, where $\epsilon$ is small enough that $CF(\epsilon)<D(X)$ where $C$ is the constant given by Proposition \ref{proposition:web} and $D(X)$ is the constant given by Proposition \ref{proposition:smallscale}. Set $f$ to be a constant speed geodesic on each edge of $\tau_\epsilon$, so that the map $S^1\To X$ given by
$$e^{i\theta}\mapsto f\left(\psi_\Delta^{-1}\left(\epsilon e^{i\theta}\right)\right)$$
is $D(X)$-Lipschitz. By Proposition \ref{proposition:smallscale}, $f$ now admits a Lipschitz extension over 2-cells.

On the other hand, if $F(\epsilon)$ does not go to $0$ as $\epsilon\To 0$, then for every natural number $n$, we may find points $x_n,y_n\in D^2$ such that $d(x_n,y_n)<\frac{1}{n}$ but $d(\tilde{f}(x_n),\tilde{f}(y_n))>\delta$ for some fixed $\delta>0$. Because $D^2\times D^2$ is compact, we know that $(x_n,y_n)$ must subconverge to some point $(x,x)$ in the diagonal of $D^2\times D^2$. But $(\tilde{f}(x_n),\tilde{f}(y_n))$ cannot subconverge to $(\tilde{f}(x),\tilde{f}(x))$ because $d(\tilde{f}(x_n),\tilde{f}(y_n))>\delta$. This contradicts continuity of $\tilde{f}$. 
\end{proof}

\begin{proposition}
\label{proposition:fillingspan}
Let X be a simply connected, complete homogeneous Riemannian manifold. For any $L>0$, there exists $C(X,L)>0$ such that if $\gamma:S^1\To X$ is $L$-Lipschitz, then $\Span(\gamma)\leq C(X,L)$.
\end{proposition}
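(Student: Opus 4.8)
The plan is to argue by compactness. Because $X$ is homogeneous and isometries of $X$ preserve both the Lipschitz constant and the filling span of a loop, we may fix a basepoint $x_0\in X$ and restrict attention to the set $\mathcal L$ of $L$-Lipschitz loops $\gamma\colon S^1\To X$ with $\gamma(1)=x_0$: an arbitrary $L$-Lipschitz loop $\gamma$ is moved into $\mathcal L$ by postcomposing with an isometry $\phi$ satisfying $\phi(\gamma(1))=x_0$, without changing $\Lip(\gamma)$ or $\Span(\gamma)$. Every $\gamma\in\mathcal L$ has image inside the closed ball $\overline B(x_0,\pi L)$, which is compact since a complete Riemannian manifold is proper (Hopf--Rinow). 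Hence $\mathcal L$ is equicontinuous and pointwise relatively compact, so by Arzel\`a--Ascoli it is relatively compact in $C(S^1,X)$ with the uniform metric, and it is closed there because a uniform limit of $L$-Lipschitz loops based at $x_0$ is again such a loop. Thus $\mathcal L$ is compact. By Corollary~\ref{corollary:finitespan}, $\Span(\gamma)<\infty$ for each $\gamma\in\mathcal L$; the task is to make this bound uniform over $\mathcal L$.

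The heart of the argument is a \emph{stability estimate}: there are constants $\delta=\delta(X)>0$ and $C_1=C_1(X)$ so that if $\gamma_0,\gamma_1\in\mathcal L$ satisfy $\sup_{z\in S^1}d(\gamma_0(z),\gamma_1(z))<\delta$, then
$$\Span(\gamma_1)\le C_1\bigl(\Span(\gamma_0)+L+1\bigr).$$
To prove this, take $\delta$ smaller than a third of the constant $D(X)$ from Proposition~\ref{proposition:smallscale}, so that any two points of $X$ occurring within distance $3\delta$ of one another are joined by a unique geodesic. Subdivide $S^1$ into $N=O(L+1)$ equal arcs, fine enough that $\gamma_0$ and $\gamma_1$ each move by less than $\delta$ across a single arc, and subdivide $[0,1]$ into $N$ intervals, obtaining a grid on the annulus $S^1\times[0,1]$ whose cells are uniformly bilipschitz to $\Deps$ with $\epsilon=1/N$. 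Define $h$ on the $1$-skeleton of this grid by $h(\cdot,0)=\gamma_0$, $h(\cdot,1)=\gamma_1$, letting $h$ traverse the geodesic from $\gamma_0(z)$ to $\gamma_1(z)$ along each radial segment $\{z\}\times[0,1]$ (for $z$ a grid point) and the unique short geodesic between consecutive vertex images along each transverse edge. Each cell boundary then maps to a loop of length $O(\delta)$, hence of Lipschitz constant $O(L+1)$ after rescaling the cell to $D^2$; so Proposition~\ref{proposition:smallscale} together with the template strategy following Proposition~\ref{proposition:web} lets us extend $h$ across every cell with Lipschitz constant $O(L+1)$. Finally, build a filling of $\gamma_1$ on $D^2$ by placing a near-optimal filling of $\gamma_0$, rescaled, on the disk $|z|\le\tfrac12$ and the reparametrized annulus $h$ on $\tfrac12\le|z|\le1$ (this is the use of Proposition~\ref{proposition:sun} foreshadowed above, with $\tilde\gamma=\gamma_0$); since $h(\cdot,0)=\gamma_0$, $h(\cdot,1)=\gamma_1$ and $h(1,\cdot)\equiv x_0$, these glue to a filling of $\gamma_1$ of Lipschitz constant $O(L+1)+O(\Span(\gamma_0))$, which yields the estimate after adjusting $C_1$.

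With the stability estimate established, use compactness of $\mathcal L$ to cover it by finitely many balls $B(\gamma^{(1)},\delta),\dots,B(\gamma^{(k)},\delta)$ in the uniform metric. Every $\gamma\in\mathcal L$ lies in some $B(\gamma^{(i)},\delta)$, so $\Span(\gamma)\le C_1\bigl(\Span(\gamma^{(i)})+L+1\bigr)$, and since the finitely many values $\Span(\gamma^{(1)}),\dots,\Span(\gamma^{(k)})$ are all finite by Corollary~\ref{corollary:finitespan}, the constant
$$C(X,L)=C_1\Bigl(L+1+\max_{1\le i\le k}\Span(\gamma^{(i)})\Bigr)$$
works. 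The one genuinely delicate point is the bound $\Lip(h)=O(L+1)$ on the interpolating annulus inside the stability estimate; this is precisely where subdividing so finely that every cell boundary has small Lipschitz constant — and hence falls under Proposition~\ref{proposition:smallscale} — does the work. Everything else is the routine template bookkeeping suppressed elsewhere in this section.
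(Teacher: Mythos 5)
Your proof is correct and takes essentially the same route as the paper: reduce to basepointed loops, use Arzel\`a--Ascoli to get compactness of $\mathcal L$, establish a local stability estimate showing that $\Span$ cannot jump too much between nearby loops (by gluing a rescaled filling of the nearby loop onto an annular region subdivided into cells small enough for Proposition~\ref{proposition:smallscale} to apply), then take a finite subcover. The only cosmetic difference is that you build the annular homotopy with an explicit grid on $S^1\times[0,1]$ and then glue via Proposition~\ref{proposition:sun}, whereas the paper works directly with the sector decomposition of Proposition~\ref{proposition:sun} (with $\epsilon=D(X)/L$) inside $D^2$; both yield the same estimate.
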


Gromov \cite[\S 5]{gromov} refers to $C(X,L)$ as the filling span function of $X$.

\begin{proof}
Fix $L$ (all constants from here on will be presumed to depend uncontrollably on $L$). Let $\cC_L$ denote the $L$-Lipschitz loops in $X$ with some fixed basepoint $x$, and equip $\cC_L$ with the uniform metric. By Arzela-Ascoli, $\cC_L$ is compact. Certainly, $\Span$ is not continuous on $\cC_L$, but we will show that it is bounded. Because $\cC_L$ is compact, for every $\epsilon>0$, $\cC_L$ may be covered by a finite number of $\epsilon$ balls.  Hence, it suffices to find a constant $C>1$ such that for $\gamma_0,\gamma\in\cC_L$ with $d(\gamma,\gamma_0)<\frac{1}{C}$ we have
$$\Span(\gamma)<C\max\{1,\Span(\gamma_0)\}.$$
To do this, let $D(X)$ be as in proposition \ref{proposition:smallscale} (assuming without loss of generality that $D(X)<1$), and let $\gamma_0,\gamma\in\cC_L$ with $d(\gamma_0,\gamma)<D(X)$. Let $f_0:D^2\To X$ be a Lipschitz filling of $\gamma_0$. Let $\epsilon=\frac{D(X)}{L}$ and decompose $D^2$ into an inner disk and sectors as in Proposition \ref{proposition:sun}. We will produce a filling $f:D^2\To X$ of $\gamma$ which is $\frac{\Lip(f_0)}{1-\epsilon}$-Lipschitz on the inner disk and $O(1)$-Lipschitz on the annular region as desired.

Define $f|_{S^1}$ to be equal to $\gamma$. Let $R=1-\epsilon$ be the radius of the inner disk and define $f$ to be a rescaled copy of $f_0$ on the inner disk---i.e., $f(re^{i\theta})=f_0(\frac{r}{R}e^{i\theta})$ for $r\leq R$. This implies that $f$ is $\frac{\Lip(f_0)}{R}$-Lipschitz on the inner disk. If $x$ and $y$ are the images under $f$ of the endpoints of a radial segment separating two sectors, then $d(x,y)<D(X)$ because $d(\gamma,\gamma_0)<D(X)$. Define $f$ to be a minimal speed geodesic on each of the radial segments separating two sectors, so that $f$ is $O(1)$-Lipschitz on the boundary of each sector (considering $L$ as fixed). Since sectors are uniformly bilipschitz to $\Deps$, $f$ admits a $O(1)$-Lipschitz extension over each sector by Proposition \ref{proposition:smallscale}.

We see that $f$ is $O(1)$-Lipschitz on the annular region and $\frac{\Lip(f_0)}{1-\epsilon}$-Lipschitz on the inner disk, implying the desired bound for $\Span(\gamma)$, taking any sufficiently large $C$.
\end{proof}

\begin{proposition}
\label{proposition:QI}
Let $X$ and $Y$ be simply connected, complete, homogeneous Riemannian manifolds, and suppose that $X$ is quasi-isometric to $Y$. If $Y$ is Lipschitz 1-connected, then so is $X$.
\end{proposition}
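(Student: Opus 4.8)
The plan is to carry loops and their fillings back and forth across the quasi-isometry, repairing the discontinuity of the quasi-isometry with the small-scale Lipschitz 1-connectedness of $X$ guaranteed by Proposition \ref{proposition:smallscale}. I would fix a quasi-isometry $q:X\To Y$ and a quasi-inverse $\bar q:Y\To X$ for which $d(\bar q(q(x)),x)$ is bounded uniformly in $x$, and recall that $X$ and $Y$ are geodesic metric spaces, being complete connected Riemannian manifolds. Let $\gamma:S^1\To X$ be $L$-Lipschitz; we want $\Span(\gamma)=O(L)$. When $L<D(X)$ this is Proposition \ref{proposition:smallscale}, and when $D(X)\le L\le L_0$ for a fixed threshold $L_0$ it follows from Proposition \ref{proposition:fillingspan}, which bounds $\Span(\gamma)$ by a constant while $L$ stays bounded below by $D(X)$. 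So I may take $L$ larger than any constant I wish; fix a small $\delta_0>0$, to be pinned down at the end in terms of $X$, $Y$, $q$ and $\bar q$ alone, and set $\epsilon=\delta_0/L<1$.

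The first step is to produce a companion loop in $Y$. Triangulate $D^2$ with mesh parameter $\epsilon$ as in Proposition \ref{proposition:web}; since the triangles are uniformly bilipschitz to $\Deps$, their edges have length $\asymp\epsilon$, so the vertices $w_0,\dots,w_{M-1}$ of the triangulation lying on $S^1$ are spaced at distance $\asymp\epsilon$ along $S^1$ and $d(\gamma(w_j),\gamma(w_{j+1}))=O(L\epsilon)=O(\delta_0)$. Hence $d(q(\gamma(w_j)),q(\gamma(w_{j+1})))=O(1)$, and joining the values $q(\gamma(w_j))$ in cyclic order by constant-speed geodesics of $Y$ yields a loop $\delta:S^1\To Y$ with $\Lip(\delta)=O(L)$. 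Because $Y$ is Lipschitz 1-connected, $\delta$ bounds a disk $g:D^2\To Y$ with $\Lip(g)=O(L)$.

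The second step is to pull $g$ back to a filling of $\gamma$, using the same triangulation of $D^2$. Define $f$ on the $0$-skeleton by $f(w)=\gamma(w)$ for $w\in S^1$ and $f(w)=\bar q(g(w))$ for interior vertices $w$. A short estimate—combining $\Lip(g)=O(L)$, the mesh bound, $g(w_j)=\delta(w_j)=q(\gamma(w_j))$, and the uniform bound on $d(\bar q(q(x)),x)$—shows that $f$ sends any two adjacent vertices to points $O(1)$ apart in $X$, with the implied constant independent of $L$. Extend $f$ over each interior edge by a constant-speed geodesic of $X$ between the vertex values, and over each boundary edge by $\gamma$ itself; since interior edges have length $\asymp\epsilon$, this makes $f$ restricted to the $1$-skeleton $O(1/\epsilon)=O(L)$-Lipschitz. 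For a triangle $\Delta$ with bilipschitz chart $\psi_\Delta:\Delta\To\Deps$, the loop $e^{i\theta}\mapsto f|_{\partial\Delta}(\psi_\Delta^{-1}(\epsilon e^{i\theta}))$ then has Lipschitz constant $O(L\epsilon)=O(\delta_0)$, so if $\delta_0$ was chosen small enough that this is $<D(X)$, Proposition \ref{proposition:smallscale} fills it by an $O(\delta_0)$-Lipschitz disk; rescaling this disk through $\psi_\Delta$ (exactly as in the template recipe following Proposition \ref{proposition:web}) extends $f$ over $\Delta$ with Lipschitz constant $O(1/\epsilon)=O(L)$. The pieces agree along the $1$-skeleton, so $f:D^2\To X$ is a continuous $O(L)$-Lipschitz filling of $\gamma$, and $\Span(\gamma)=O(L)$.

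The main obstacle is twofold. First, $q$ and $\bar q$ are only quasi-isometries, not continuous, so nothing may be composed directly: $\delta$, the vertex assignment defining $f$, and the small triangle loops must all be assembled by hand from discretizations. Second—and this is why the construction is run at the fine scale $\epsilon$—one cannot extend $f$ over the $2$-cells by fiat, since that would require $X$ itself to be Lipschitz 1-connected; instead the cells are taken small enough that their boundary loops have Lipschitz constant below $D(X)$ and so can be filled via Proposition \ref{proposition:smallscale}. The real work is therefore the uniformity bookkeeping: verifying that every implied constant above, and in particular the Lipschitz constants of the loops around the triangles $\Delta$, depends only on $X$, $Y$, $q$ and $\bar q$ and not on $L$, so that the scale $\delta_0$ at which Proposition \ref{proposition:smallscale} is invoked can be fixed once and for all.
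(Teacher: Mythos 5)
Your overall architecture is the same as the paper's (discretize $\gamma$ at scale $\epsilon\asymp 1/L$, push the vertex values to $Y$, interpolate geodesically, fill there by Lipschitz 1-connectedness, pull the filling back to the vertices via the quasi-inverse, and then fill the small cells), but the final step as you wrote it fails. You claim that the loop $e^{i\theta}\mapsto f|_{\partial\Delta}(\psi_\Delta^{-1}(\epsilon e^{i\theta}))$ has Lipschitz constant $O(L\epsilon)=O(\delta_0)$, so that choosing $\delta_0$ small puts you below $D(X)$ and lets you invoke Proposition \ref{proposition:smallscale}. This is a big-$O$ bookkeeping error at exactly the crucial point: adjacent vertices are sent by $f$ to points whose distance is bounded by something like $\lambda\,\Lip(g)\,C\epsilon + c$, where $c$ absorbs the additive quasi-isometry constants and the bound on $d(\bar q(q(x)),x)$; this is $O(\delta_0)+c$, and the additive term $c$ does not shrink with $\delta_0$. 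Hence $f$ on the $1$-skeleton is only $O(1/\epsilon)$-Lipschitz with a hidden constant $\asymp c$, and after composing with the $O(\epsilon)$-Lipschitz map $\psi_\Delta^{-1}(\epsilon e^{i\theta})$ the factors of $\epsilon$ cancel: the cell-boundary loops are $O(1)$-Lipschitz with a constant determined by the quasi-isometry data, and no choice of $\delta_0$ makes them shorter than $D(X)$ in general. So Proposition \ref{proposition:smallscale} cannot be applied as stated; this is the same obstruction you correctly flag in your "main obstacle" paragraph (discontinuity of $\bar q$ at unit scale), but your quantitative claim contradicts it.

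The repair is immediate and is what the paper does: since the cell-boundary loops are $L_1$-Lipschitz for a fixed $L_1=L_1(X,Y,q,\bar q)$ independent of $L$, fill each of them by Proposition \ref{proposition:fillingspan} (the filling span function) with a disk of Lipschitz constant $C(X,L_1)=O(1)$, and rescale through $\psi_\Delta$ to extend $f$ over $\Delta$ with constant $O(1/\epsilon)=O(L)$; with this substitution you can simply take $\epsilon=1/L$ and drop $\delta_0$ altogether. With that fix your proof is correct and is essentially the paper's argument, with one mild difference: you set $f=\gamma$ on the boundary edges and on boundary vertices, so the filling matches $\gamma$ directly, whereas the paper first builds a map $f_0$ agreeing with $\Psi\circ\tilde f$ at all vertices (so $f_0|_{S^1}$ is only uniformly $O(1)$-close to $\gamma$) and then glues in an annular collar of sectors via Proposition \ref{proposition:sun}; your boundary treatment avoids that collar, but both routes rest on the same two pillars, Lipschitz 1-connectedness of $Y$ at scale $L$ and Proposition \ref{proposition:fillingspan} at scale $O(1)$.
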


It seems likely that $X$ and $Y$ are bilipschitz if they are quasi-isometric.

\begin{proof}
Let $\psi:X\To Y$ be a quasi isometry, and $\Psi:Y\To X$ a quasi inverse to $\psi$.
Let $\gamma:S^1\To X$ be an $L$-Lipschitz loop, where $L>1$ without loss of generality, and let $\epsilon=\frac{1}{L}$. To obtain a $O(L)$-Lipschitz filling for $\gamma$, we proceed as follows.

Using Proposition \ref{proposition:web}, subdivide $D^2$ into $O(L^2)$ triangles bilipschitz to $\Deps$, so that adjacent vertices on the boundary are mapped to within $O(1)$ of each other by $\psi\circ\gamma$. Let $\tilde{\gamma}:S^1\To Y$ be a $O(L)$-Lipschitz loop in $Y$ which agrees with $\psi\circ\gamma$ on vertices of our triangulation. By assumption, $Y$ is Lipschitz 1-connected, so $\tilde{\gamma}$ admits a $O(L)$-Lipschitz filling $\tilde{f}:D^2\To Y$.

We wish to convert $\Psi\circ\tilde{f}$ into a filling $f:D_2\To X$ of $\gamma$.  Let $f_0:D^2\To X$ be a $O(L)$-Lipschitz map which agrees with $\Psi\circ\tilde{f}$ on vertices of our triangulation---such a map exists because we may fill edges with constant speed geodesics and then fill triangles in $X$ by Proposition \ref{proposition:fillingspan}, as $\Psi\circ\tilde{f}$ maps adjacent vertices to within $O(1)$ of each other. This gives us a $O(L)$-Lipschitz filling of $f_0|_{S^1}$. Note that the distance from $\gamma$ to $f_0|_{S^1}$ in the uniform metric is $O(1)$ because on $f_{0}|_{S^1}$ agrees with $\Psi\circ\psi\circ\gamma$ on vertices.

Now we take a new subdivision of $D^2$, using Proposition \ref{proposition:sun} to subdivide $D^2$ into an inner disk surrounded by $O(L)$ sectors which are uniformly bilipschitz to $\Deps$. We build our filling $f$ of $\gamma$ as follows. On the inner disk, let $f$ be given by a rescaled copy $f_0$. On radial segments, take $f$ to be a constant speed geodesic, and fill sectors using Proposition \ref{proposition:fillingspan}.
\end{proof}

\section{Lipschitz moves.}
\label{section:moves}

We now specialize to the case where $X$ is equal to some simply connected Lie group $G$ equipped with a left-invariant Riemannian metric. Our main goal in this section is to reduce questions about filling loops in $G$ to questions about manipulating words in some compact generating set for $G$.

\paragraph{Notation.} Any simply connected Lie group $G$ admits a compact generating set $\cS$. The set $\cS^*$ consists of all words $s_1s_2\ldots s_\ell$ where $s_1,\ldots,s_\ell\in\cS$ and $\ell\in\NN$, together with the empty word $\varepsilon$. The length of a word $w\in\cS^*$ will be denoted $\ell(w)$. Given $w=s_0\ldots s_\ell\in\cS^*$, $w^{-1}$ denotes the word $s_\ell^{-1}\ldots s_1^{-1}$. If $w\in \cS^*$ represents the identity element $1_G$ of $G$, $w$ is said to be a relation. If $w,w'\in\cS^*$ represent the same element of $G$, we write $w=_G w'$. The word norm with respect to $\cS$ will be denoted by $|\cdot|_\cS$. That is for $g\in G$, we define $|g|_\cS$ to be $\inf\{\ell(w):g=_G w\in\cS^*\}$.

\paragraph{Assumptions.} Any time we take a compact generating set $\cS$ for $G$, we assume without loss of generality that $\cS$ is symmetric---meaning $s\in\cS\Leftrightarrow s^{-1}\in\cS$---and that $1_G\in\cS$, unless otherwise indicated.

\paragraph{It suffices to fill over the unit square.} It will be convenient to consider loops as maps $[0,1]\To X$ rather than $S^1\To X$ and fillings as maps from the unit square $[0,1]\times[0,1]\To X$ rather than $D^2\To X$.

\begin{definition}
Given $\beta:[0,1]\To X$ with $\beta(0)=\beta(1)$, a filling of $\beta$ over the unit square is a map $f:[0,1]\times[0,1]\To X$ with the following properties:
\begin{itemize}
\item $f$ agrees with $\beta$ on the bottom edge of the unit square, meaning that $f(t,0)=\beta(t)$ for all $t\in [0,1]$.
\item $f$ is constant on the set on the other three edges of the unit square, meaning that $f(x,y)=\beta(0)$ for all
$$(x,y)\in ([0,1]\times {1})\cup ({0}\times[0,1])\cup({1}\times [0,1]).$$
\end{itemize}
We define $\Span(\beta)$ to be the infimum of $\Lip(f)$ as $f$ ranges over Lipschitz fillings of $\beta$ over the unit square.

Similarly, given $\beta,\gamma:[0,1]\To X$, a homotopy from $\beta$ to $\gamma$ is a map $[0,1]\times[0,1]\To X$ such that $f(t,1)=\gamma(t)$, $f(t,0)=\beta(t)$ and the restrictions $f|_{0\times[0,1]}$ and $f|_{1\times[0,1]}$ are constant.
\end{definition}

We will now see that it makes no difference whether we consider loops as maps from $[0,1]$ or $S^1$. The key tool is the following lemma.

\begin{lemma}
\label{lemma:reparameterization}
There exists a constant $C$ such that, given loops $\beta:S^1\To X$ and $\gamma:S^1\To X$ such that $\gamma$ is a reparameterization of $\beta$, and given a filling $f:D^2\To X$ of $\gamma$, we have
$$\Span(\beta)\leq C\max\{\Lip(f),\Lip(\beta)\}.$$
\end{lemma}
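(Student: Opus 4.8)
The plan is to reparameterize the filling $f$ of $\gamma$ so that it becomes a filling of $\beta$, paying only a bounded multiplicative cost in the Lipschitz constant. First I would observe that since $\gamma$ is a reparameterization of $\beta$, there is an orientation-preserving homeomorphism $\phi:S^1\To S^1$ with $\beta = \gamma\circ\phi$. The naive idea — precompose $f$ with a self-map of $D^2$ extending $\phi$ — fails in general because $\phi$ need not be Lipschitz (a reparameterization can speed up and slow down arbitrarily), so $f\circ(\text{extension of }\phi)$ need not be Lipschitz even though its boundary values are. The point of the lemma is that $\Lip(\beta)$ is also assumed finite, which controls how badly $\phi$ can distort.

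The key step is therefore a collar construction rather than a global reparameterization. I would split $D^2$ into an inner disk $D_{1/2}$ of radius $\tfrac12$ and an outer annulus $A$. On the inner disk, set $f':D_{1/2}\To X$ to be a rescaled copy of $f$, i.e. $f'(z) = f(2z)$; this is $2\Lip(f)$-Lipschitz and its boundary circle traces out $\gamma$ (suitably reparameterized, but still the loop $\gamma$). On the outer annulus $A\cong S^1\times[\tfrac12,1]$, I want to build a homotopy in $X$ from $\gamma$ (on the inner boundary) to $\beta$ (on the outer boundary, which is $\partial D^2$) that is $C\max\{\Lip(f),\Lip(\beta)\}$-Lipschitz. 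Such a homotopy exists essentially because $\gamma$ and $\beta$ have the same image curve; concretely one can use a ``sweep'': for a point at radius $r\in[\tfrac12,1]$ and angle parameter $t$, interpolate between the arc of $\gamma$ and the arc of $\beta$ that cover the same portion of the common image. Because both loops are Lipschitz (with constants $\Lip(f)\ge\Lip(\gamma)$ — note $\Lip(\gamma)\le\Lip(f)$ since $\gamma = f|_{S^1}$ — and $\Lip(\beta)$), the interpolation can be arranged to have Lipschitz constant comparable to the larger of the two, using a partition-of-unity / piecewise-geodesic argument together with Proposition \ref{proposition:smallscale} on small scales. Finally, glue $f'$ on $D_{1/2}$ to the homotopy on $A$; continuity and Lipschitz bounds match up along the common circle because both restrict to $\gamma$ there, and the resulting map is a filling of $\beta$ with Lipschitz constant $O(\max\{\Lip(f),\Lip(\beta)\})$.

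The main obstacle is making the homotopy on the annulus genuinely Lipschitz without assuming anything about the modulus of continuity of the reparameterizing homeomorphism $\phi$. The trick is that one does not need to follow $\phi$: one builds the homotopy by moving each point \emph{along the common image curve} to the nearest point on the target parameterization, and a priori control comes from the fact that the curve, as a set with its arclength, is traced by both $\gamma$ and $\beta$ at bounded speed. I expect this to reduce, after invoking Proposition \ref{proposition:smallscale} to fill the small triangles produced by a fine grid on $A$ as in the ``using templates'' paragraph, to a finite combinatorial interpolation between two Lipschitz loops that agree as sets — which is exactly the kind of boundary-extension step the paper repeatedly defers as routine. Once the annular homotopy is in hand, the gluing and the bookkeeping of constants are straightforward.
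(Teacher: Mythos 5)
The paper's own ``proof'' of this lemma is a one-line citation to \cite[Lemma 8.13]{ry}, so there is no detailed argument in the paper to compare against; I will assess your sketch on its merits.

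Your skeleton is correct: the collar decomposition, a rescaled copy of $f$ on the inner disk, and an annular homotopy from $\gamma$ out to $\beta$ is the right reduction, and you correctly identify that the whole difficulty is building a Lipschitz annular homotopy with no control on the modulus of continuity of the reparameterizing homeomorphism $\phi$. You also point at the right source of control, namely that both loops traverse the common image at bounded speed. But the key construction is left as a handwave, and the phrase ``move each point along the common image curve to the nearest point on the target parameterization'' is both ill-defined (the curve may self-intersect, so there is no canonical nearest point) and not the operation that works. The clean construction is to factor $\gamma = \gamma_0\circ g$ and $\beta = \gamma_0\circ h$ through the constant-speed parameterization $\gamma_0$, where $g,h:S^1\To S^1$ are monotone degree-one maps sharing a basepoint, lift to $\tilde g,\tilde h:[0,1]\To[0,1]$ with $\tilde g(0)=\tilde h(0)=0$ and $\tilde g(1)=\tilde h(1)=1$, and set $H(t,s)=\gamma_0\bigl((1-s)\tilde g(t)+s\tilde h(t)\bigr)$. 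Writing $\ell$ for the length of $\gamma$, one has $\Lip(\gamma_0)=\ell$, $\Lip(\tilde g)\leq\Lip(\gamma)/\ell$, $\Lip(\tilde h)\leq\Lip(\beta)/\ell$, and $|\tilde h(t)-\tilde g(t)|\leq 1$, which give $|\partial_t H|\leq\max\{\Lip(\gamma),\Lip(\beta)\}$ and $|\partial_s H|\leq\ell\leq\Lip(\gamma)\leq\Lip(f)$. Without something this explicit, the sentence ``the interpolation can be arranged to have Lipschitz constant comparable to the larger of the two'' restates the goal rather than achieving it, and appealing to a fine grid plus Proposition \ref{proposition:smallscale} does not help: to use that proposition you must still assign values on the grid edges and verify that adjacent vertices land close together, which is precisely the content you have not supplied. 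Finally, you should record the preliminary reductions to the case where $\phi$ preserves orientation and basepoint, achieved by precomposing $f$ with a reflection or rotation of $D^2$, which are isometries.
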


\begin{proof}
An equivalent statement is proved in \cite[Lemma 8.13]{ry}. 
\end{proof}

\begin{corollary}
\label{corollary:square}
There is a universal constant $C>0$ with the following property. Suppose that $\gamma:S^1\To X$ is a Lipschitz loop and $\beta:[0,1]\To X$ a Lipschitz path with $\beta(t)=\gamma(e^{2\pi it})$ for all $t\in [0,1]$. Then
$$\frac{\Span(\beta)}{C}<\Span(\gamma)<C\Span(\beta).$$
\end{corollary}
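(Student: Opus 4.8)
The plan is to deduce Corollary \ref{corollary:square} from Lemma \ref{lemma:reparameterization} by applying it in both directions, after dealing with the mild nuisance that the corollary mixes the ``filling over the unit square'' picture with the ``filling over $D^2$'' picture. First I would fix a standard Lipschitz quotient map $q:D^2\To [0,1]\times[0,1]$ that collapses the three non-bottom edges of the square to the basepoint and is bilipschitz away from a neighborhood of those edges; equivalently, one may build a standard bilipschitz-except-on-a-null-set identification that lets any filling of $\beta$ over the unit square be reinterpreted as a filling of (a reparameterization of) the loop $t\mapsto\beta(t)$ on $S^1$, and conversely. Concretely, given $\gamma:S^1\To X$ and $\beta(t)=\gamma(e^{2\pi i t})$, note that $\beta$ is a path with $\beta(0)=\beta(1)$, and its associated loop on $S^1$ is literally $\gamma$ up to the reparameterization $e^{2\pi i t}\leftrightarrow t$.

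For the inequality $\Span(\beta)<C\Span(\gamma)$: take any filling $f:D^2\To X$ of $\gamma$ with $\Lip(f)$ close to $\Span(\gamma)$. Precompose with the collapsing map $q$ (or its inverse on the complement of a measure-zero set) to obtain a filling of $\beta$ over the unit square whose Lipschitz constant is $O(\Lip(f))$; here one uses that $\Lip(\beta)\le 2\pi\Lip(\gamma)\le 2\pi\Span(\gamma)$ when $\Span(\gamma)\ge\Lip(\gamma)$ — and if $\Span(\gamma)<\Lip(\gamma)$ the statement is trivial after adjusting $C$, since actually $\Span(\gamma)\ge$ a constant times $\Lip(\gamma)$ is false in general but we always have $\Lip(\beta)=O(\Lip(f))$ for any filling $f$ of $\gamma$ because $\beta$ is just $f$ restricted to part of the boundary circle. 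So in fact $\Lip(\beta)\le 2\pi\Lip(f)$ directly, and we get $\Span(\beta)\le C\Lip(f)$; taking the infimum over $f$ gives $\Span(\beta)\le C\Span(\gamma)$.

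For the reverse inequality $\Span(\gamma)<C\Span(\beta)$: take a filling $g:[0,1]\times[0,1]\To X$ of $\beta$ over the unit square with $\Lip(g)$ near $\Span(\beta)$. Reinterpret $g$ via the collapsing identification as a filling $\tilde f:D^2\To X$ of a loop $\tilde\gamma:S^1\To X$ which is a reparameterization of $\gamma$ (it is $\gamma$ run along the bottom edge and constant along the other three edges, then viewed on the circle), with $\Lip(\tilde f)=O(\Lip(g))$ and $\Lip(\tilde\gamma)=O(\Lip(\beta))=O(\Lip(g))$. Now apply Lemma \ref{lemma:reparameterization} with $\beta\rightsquigarrow\gamma$, $\gamma\rightsquigarrow\tilde\gamma$, $f\rightsquigarrow\tilde f$ to conclude $\Span(\gamma)\le C\max\{\Lip(\tilde f),\Lip(\gamma)\}=O(\Lip(g))$, hence $\Span(\gamma)\le C\Span(\beta)$ after taking the infimum. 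Combining the two inequalities and enlarging $C$ gives the corollary.

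The only genuinely nontrivial point is the bookkeeping in the identification between $D^2$ and the unit square: one must make sure the collapsing map $q$ and a fixed-Lipschitz ``inverse'' exist so that Lipschitz constants are distorted only by a universal factor, and that the boundary data matches up (the bottom edge of the square, traversed once, corresponds to going once around $S^1$). This is routine — it is the same device used implicitly throughout \S\ref{section:moves} — but it is where all the care goes; everything else is two applications of Lemma \ref{lemma:reparameterization} together with the trivial bound $\Lip(\beta)=O(\Lip(\text{filling of }\gamma))$. I would present the identification once, cleanly, and then let the two inequalities fall out symmetrically.
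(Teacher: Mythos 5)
Your second inequality is exactly the paper's argument: fix a bilipschitz $\psi:D^2\To[0,1]^2$, note that $g\circ\psi$ is a filling of a reparameterization of $\gamma$, and invoke Lemma~\ref{lemma:reparameterization}. That part is fine.

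The first inequality, however, has a gap as written, and it traces to your ``collapsing map'' $q$. You announce $q:D^2\To[0,1]\times[0,1]$ ``that collapses the three non-bottom edges of the square to the basepoint and is bilipschitz away from a neighborhood of those edges.'' These two requirements are incompatible: a map cannot collapse a curve to a point and simultaneously be (even essentially) bilipschitz near it, and moreover you have the direction reversed --- the three edges live in the \emph{codomain} $[0,1]^2$, so $q$ cannot collapse them. When you then say ``precompose with $q$ (or its inverse on the complement of a measure-zero set),'' the thing you would actually be forming is $f\circ\psi^{-1}:[0,1]^2\To X$ for a bilipschitz $\psi$, and this is \emph{not} a filling of $\beta$ over the unit square: its restriction to $\partial([0,1]^2)$ is $\gamma$ traversed around the whole square boundary, not $\beta$ on the bottom and constant on the other three edges. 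Nothing in your argument turns off $\gamma$ on those three edges.

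The paper's fix is the same device you use in your second direction, just applied symmetrically. Let $\beta_0:\partial([0,1]^2)\To X$ be $\beta$ on the bottom edge and the constant $\beta(0)$ on the other three. Then $\gamma$ is a reparameterization of $\beta_0\circ\psi:S^1\To X$. Given a filling $f$ of $\gamma$, Lemma~\ref{lemma:reparameterization} produces a filling $\tilde f$ of $\beta_0\circ\psi$ with $\Lip(\tilde f)=O(\max\{\Lip(f),\Lip(\beta_0\circ\psi)\})=O(\Lip(f))$ (since $\Lip(\beta_0\circ\psi)=O(\Lip(\gamma))=O(\Lip(f))$), and now $\tilde f\circ\psi^{-1}$ \emph{is} a filling of $\beta$ over the unit square, because its boundary trace is $\beta_0$ by construction. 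So you should drop the collapsing-map device entirely: there is no need for it, Lemma~\ref{lemma:reparameterization} already carries the ``collapse'' for you (it is precisely what lets you replace $\gamma$ by a loop that is constant on three-quarters of the circle), and the bilipschitz $\psi$ is the only identification you need, used identically in both directions.
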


\begin{proof}
Let $\beta_0:\partial([0,1]^2)\To X$ be equal to $\beta$ on the bottom edge and $\beta(0)$ on the other three edges, and fix a bilipschitz map $\psi:D^2\To [0,1]\times[0,1]$.  Observe that $\gamma$ is a reparameterization of $\beta_0\circ\psi:S^1\To X$. If $f:D^2\To X$ is a Lipschitz filling of $\gamma$, then by Lemma \ref{lemma:reparameterization} there is some $O(\Lip(f))$-Lipschitz filling $\tilde{f}:D^2\To X$ of $\beta_0\circ\psi$, so $\tilde{f}\circ\psi^{-1}$ is a $O(\Lip(f))$-Lipschitz filling of $\beta$. We see that $\Span(\beta)=O(\Span(\gamma))$. The reverse inequality is proved similarly.
\end{proof}

\subsection{Filling relations.}

\paragraph{It suffices to fill words.} Let $\cS$ be a compact generating set for $G$. For each $s\in \cS$ choose a Lipschitz curve $\gamma_s:[0,1]\To G$ connecting $1$ to $s$, such that the following properties hold.
\begin{itemize}
\item There is some uniform bound on $\Lip(\gamma_{s})$ as $s$ ranges over $\cS$.
\item $\gamma_{1_G}$ is constant.
\item $\gamma_s(1-t)=\gamma_{s^{-1}}(t)$ for all $s\in \cS$ and $t\in[0,1]$.
\end{itemize}
Given a word $w=s_1s_2\ldots s_\ell\in\cS^*$, let $\gamma_{w}:[0,1]\To G$ denote the concatenation of the paths $\gamma_{s_1},s_1\gamma_{s_2},\ldots,s_1\ldots s_{\ell-1}\gamma_{s_\ell}$, reparameterized so that the $i$-th of these paths is used on $[\frac{i-1}{\ell},\frac{i}{\ell}]$. That is, for $0\leq t \leq 1$ and $i=1,\ldots \ell$, we have
$$\gamma_w\left(\frac{i-1+t}{\ell}\right)=s_1s_2\ldots s_{i-1}\gamma_{s_i}(t).$$

\begin{proposition}
\label{proposition:words}
Suppose that there exists a constant $C$ such that for every $w\in\cS^*$ with $w=_G 1_G$, there exists a $C\ell(w)$-Lipschitz filling of $\gamma_{w}$ over the unit square. Then $G$ is Lipschitz 1-connected.
\end{proposition}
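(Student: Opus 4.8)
The plan is to reduce an arbitrary Lipschitz loop $\gamma:S^1\To G$ to the word case covered by the hypothesis, and then glue the pieces together. By Corollary \ref{corollary:square} it suffices to bound $\Span(\beta)$ for a path $\beta:[0,1]\To G$ representing $\gamma$, so fix an $L$-Lipschitz such $\beta$ with $\beta(0)=\beta(1)$, and assume $L$ is large (the case $L<D(G)$ is handled by Proposition \ref{proposition:smallscale}). First I would discretize: choose $N=O(L)$ and let $x_i=\beta(i/N)$ for $i=0,\ldots,N$, so consecutive $x_i$ are within $O(1)$ of each other. Since $\cS$ is a compact generating set containing a neighborhood of $1_G$ up to the word metric, there is a uniform bound $M$ with $|x_{i-1}^{-1}x_i|_{\cS}\leq M$; pick words $u_i\in\cS^*$ of length $\leq M$ with $u_i=_G x_{i-1}^{-1}x_i$. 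Then $w=(x_0^{-1}\cdot\text{nothing})$—more precisely the concatenation $w=u_1u_2\cdots u_N$, conjugated appropriately—represents $x_0^{-1}x_N=1_G$, so it is a relation of length $\ell(w)\leq MN=O(L)$, and the hypothesis gives an $O(L)$-Lipschitz filling of $\gamma_w$ over the unit square.

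The remaining work is to interpolate between $\beta$ and $\gamma_w$. I would build a homotopy over the unit square in two stages. For the first stage, on a thin strip I connect $\beta$ to the piecewise-geodesic (or piecewise-$\gamma_s$) path through the points $x_0,\ldots,x_N$: on each subinterval $[i/N,\,(i+1)/N]$ both $\beta$ and the interpolant stay within $O(1/N\cdot L)=O(1)$ of $x_i$, so Proposition \ref{proposition:fillingspan} (applied to the $O(1)$-Lipschitz loops formed by the two arcs together with short connecting geodesics) produces $O(1)$-Lipschitz fillings of the $N$ small squares making up the strip; stacking them gives an $O(L)$-Lipschitz homotopy. For the second stage I must pass from the interpolant to $\gamma_w$ itself; here the point is that each $\gamma_{u_i}$ and the corresponding arc of the interpolant both travel from $x_{i-1}$ to $x_i$ within a bounded region (diameter $O(M)=O(1)$), so again each of the $N$ constituent loops is $O(1)$-Lipschitz and fillable with $O(1)$ Lipschitz constant by Proposition \ref{proposition:fillingspan}. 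Concatenating these two homotopies with the given filling of $\gamma_w$, and reparameterizing the resulting map on the unit square to have comparable Lipschitz constant, yields an $O(L)$-Lipschitz filling of $\beta$, hence $\Span(\gamma)=O(L)=O(\Lip(\gamma))$, as required.

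The main obstacle is bookkeeping rather than anything deep: one must set up the cellular decomposition of $[0,1]^2$ so that the two interpolation strips and the central region carrying $\gamma_w$ fit together, check that all the auxiliary loops really do have $O(1)$ Lipschitz constant (this uses the uniform bound on $\Lip(\gamma_s)$ and the fact that $\beta$ moves only $O(1)$ across each subinterval), and verify that the finitely many ``filling span function'' constants $C(G,L_0)$ invoked for the small loops are genuinely bounded—which holds because all those loops have Lipschitz constant bounded by a universal $L_0$ independent of $\gamma$. I would also note that $\gamma_w$ is automatically $O(L)$-Lipschitz, so no rescaling issue arises there. Modulo these routine verifications, the reduction is complete.
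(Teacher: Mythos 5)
Your proposal is correct and follows essentially the same strategy as the paper's proof: discretize $\beta$ at scale $1/N$ with $N=O(L)$, replace consecutive increments $\beta((i-1)/N)^{-1}\beta(i/N)$ by bounded-length words $u_i$ to produce a relation $w=u_1\cdots u_N$ of length $O(L)$, fill $\gamma_w$ by the hypothesis, and then bridge from $\beta$ to the word path by filling a row of $O(1)$-Lipschitz loops on squares of side $1/N$ via Proposition \ref{proposition:fillingspan}. The only stylistic difference is that you insert a piecewise-geodesic intermediate path and interpolate in two strips, while the paper goes directly from $\beta$ to the (left-translated) word path in a single row of small squares; the two-stage version is harmless but the intermediate path is unnecessary since the relevant loops are already $O(1)$-Lipschitz. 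Two small points worth tightening: the assertion that $\cS^K$ contains the unit ball for some fixed $K$ deserves a word of justification (the paper invokes the Baire category theorem), and the parenthetical ``conjugated appropriately'' should really be ``left-translated by $\beta(0)$'' -- the point being that $\gamma_w$ is based at $1_G$ while $\beta$ need not be, and left-invariance of the metric lets you translate the filling of $\gamma_w$ for free.
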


\begin{proof}
Note that $G=\bigcup_{k=1}^{\infty}\cS^k$, and each $\cS^k$ is compact. Hence, by the Baire category theorem, some power $\cS^k$ must contain an open neighborhood, so $\cS^{2k}$ contains some open neighborhood of the identity, which in turn contains the $r$-ball around the identity for some $r>0$. Take $K$ to be a natural number larger than $\frac{2k}{r}$, so that $\cS^K$ contains the $1$-ball around the identity in $G$.

Given a loop $\gamma:[0,1]\To G$, we produce a $O(\Lip(\gamma))$-Lipschitz filling $f:[0,1]^2\To G$ of $\gamma$ as follows. Let $n=\lceil\Lip(\gamma)\rceil$---by Propositions \ref{proposition:smallscale} and \ref{proposition:fillingspan} it suffices to only consider $\gamma$ such that $n$ is at least $2$. Cellularly decompose the unit square into the rectangle $[0,1]\times\left[\frac{1}{n},1\right]$ together with the collection of squares $\left[\frac{i}{n},\frac{i+1}{n}\right]\times\left[0,\frac{1}{n}\right]$ as $i$ runs from $0$ to $n-1$.

Define $f$ to be constant on all the vertical edges. Note that for each $i$, the element $\gamma\left(\frac{i}{n}\right)^{-1}\gamma\left(\frac{i+1}{n}\right)$ lies in the $1$-ball in $G$, so it may be represented by some $w_i\in\cS^K$. On the horizontal edge connecting $\left(\frac{i}{n},\frac{1}{n}\right)$ to $\left(\frac{i+1}{n},\frac{1}{n}\right)$, let $f$ be a copy of $\gamma_{w_i}$ translated by $\gamma\left(\frac{i}{n}\right)$---that is for $0\leq t \leq 1$, take $f\left(\frac{i+t}{n},\frac{1}{n}\right)=\gamma\left(\frac{i}{n}\right)\gamma_{w_i}(t)$. Letting $w=w_1\ldots w_n$, observe that $f$ agrees with $\gamma_w$ the the bottom of the rectangle $[0,1]\times\left[\frac{1}{n},1\right]$, so $f$ may be extended over this rectangle with
$$\Lip(f)=O(\Lip(\gamma_w))=O(\ell(w))=O(\Lip(\gamma))$$
where the first bound comes from our hypothesis.  Finally, $f$ admits a $O(\Lip(\gamma))$-Lipschitz extension over each square by Proposition \ref{proposition:fillingspan}.
\end{proof}

\begin{definition}
Suppose $\cC$ is a collection of pairs $(v,w)$ where $v,w\in\cS^*$ are words in $\cS$. We say that $v\leadsto w$ for $(v,w)\in\cC$ if there exists some constant $C$ depending only on $\cC$ with the following property: given $(v,w)\in\cC$, the map from $\partial([0,1]\times [0,1])$ to $X$ which is equal to $\gamma_v$ on the bottom edge, $\gamma_w$ on the top edge, and constant on the sides admits a $C\ell(v)$-Lipschitz filling.
\end{definition}

We now give some basic rules for manipulating fillings. Often, the set $\cC$ will be inferred from context. 

\begin{lemma}
\label{lemma:big}
The following rules hold in any simply connected Lie group $G$.
\begin{itemize}
\item Suppose $\cC\subset\cS^*\times\cS^*$. Then $v\leadsto w$ for $(v,w)\in\cC$ if and only if $vw \leadsto \varepsilon$ for $(v,w)\in\cC$ and $\Lip(w)=O(\Lip(v))$ for each pair $(v,w)\in\cC$.
\item Fix $n$, and suppose given sets $\cC_i\subset\cS^*\times\cS^*$ for $i=1$ to $n$. If, for each $i=1$ to $n$, we have $v\leadsto w$ for $(v,w)\in\cC_i$, then $v_1\ldots v_n \leadsto w_1\ldots w_n$ for all $(v_1,w_1)\in\cC_1,\ldots,(v_n,w_n)\in\cC_n$.
\item Given $\cC,\cC'\subset\cS^*\times\cS^*$, if $v_1\leadsto v_2$ for $(v_1,v_2)\in\cC$ and $v_2\leadsto v_3$ for $(v_2,v_3)\in\cC'$, then $v_1\leadsto v_3$ for pairs $(v_1,v_3)$ such that there exists $v_2\in\cS^*$ with $(v_1,v_2)\in\cC$ and $(v_2,v_3)\in\cC'$.
\item $ww^{-1}\leadsto\varepsilon$ for $w\in\cS^*$.
\item Let $U\subset G$ be an bounded neighborhood of the identity in $G$. If $\mathcal{D}$ is the collection of relations $w=w_1\ldots w_\ell\in\cS^*$ such every prefix $w_1\ldots w_i$ represents an element of $U$, then $w\leadsto \varepsilon$ for $w\in\mathcal{D}$.
\item $G$ is Lipschitz 1-connected if and only if and $v\leadsto\varepsilon$ for $v\in\cS^*$. 
\end{itemize}
\end{lemma}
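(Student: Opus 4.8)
The plan is to read all six rules as statements obtained by cutting, pasting, folding and reparameterizing Lipschitz maps out of the unit square, feeding in only the facts already established: Propositions \ref{proposition:smallscale}, \ref{proposition:fillingspan}, \ref{proposition:words}, Lemma \ref{lemma:reparameterization} and Corollary \ref{corollary:square}. I would fix once and for all a constant $M$ with $\Lip(\gamma_s)\le M$ for all $s\in\cS$, so that $\Lip(\gamma_w)\le M\ell(w)$ for every word $w$; all the domain reparameterizations that occur will be piecewise-affine self-maps of $[0,1]^2$ (or of a quadrilateral) whose bilipschitz constant is controlled by ratios of the relevant word lengths, and essentially the only bookkeeping is to check that none of them inflates the target bound $C\ell(v)$.

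I would dispose of the cut-and-paste rules first. For $ww^{-1}\leadsto\varepsilon$: after the standard bilipschitz reparameterization of the square, $\gamma_{ww^{-1}}$ is the concatenation of $\gamma_w$ with its reversal (this is exactly what the symmetry condition imposed on the $\gamma_s$ was for), and the explicit ``retract along itself'' homotopy $f(s,t)=\gamma_w\!\big((1-t)\min\{2s,\,2-2s\}\big)$ is a filling, with $\Lip(f)=O(\Lip(\gamma_w))=O(\ell(w))=O(\ell(ww^{-1}))$. For the third rule, given fillings of the $(v_1,v_2)$- and $(v_2,v_3)$-pictures I would stack vertically: a vertically compressed copy of the first on $[0,1]\times[0,\tfrac12]$ and of the second on $[0,1]\times[\tfrac12,1]$; these agree along $[0,1]\times\{\tfrac12\}$ (both restrict to $\gamma_{v_2}$ in the same parameterization) and along the vertical edges (the constants match since $v_1=_Gv_2=_Gv_3$), giving a filling of the $(v_1,v_3)$-picture of Lipschitz constant $O(\max\{\ell(v_1),\ell(v_2)\})$. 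For the second rule I would juxtapose the $n$ given fillings horizontally in $n$ strips of width $1/n$, first left-translating the $i$-th by the element represented by $v_1\cdots v_{i-1}$ so adjacent strips agree on their common vertical edge; the bottom and top of the result are reparameterizations of $\gamma_{v_1\cdots v_n}$ and $\gamma_{w_1\cdots w_n}$, which I would correct to the standard parameterizations by a piecewise-affine self-map of the square of Lipschitz constant $O(n)=O(1)$, leaving a bound $O(\max_i\ell(v_i))=O(\ell(v_1\cdots v_n))$. The first rule then follows formally: reading off the boundary of the square exhibits a filling of the $(v,w)$-homotopy-picture and a filling of the relation $vw^{-1}$ as the same object up to a domain reparameterization that is bilipschitz with constant $O(\max\{1,\ell(w)/\ell(v)\})$, which is $O(1)$ precisely under the auxiliary hypothesis $\Lip(w)=O(\Lip(v))$ --- that is why the hypothesis appears --- and the identity $\ell(vw^{-1})=\ell(v)+\ell(w)$ closes the bookkeeping in both directions.

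The fifth rule is the one needing a genuinely geometric input, and I expect it to be the main obstacle. If every prefix of $w=w_1\cdots w_\ell$ represents an element of the bounded set $U$, then $\gamma_w$ has length $O(\ell)$ and its image lies in a single bounded set $B\subset G$ (enlarge $U$ to absorb the bounded paths $\gamma_s$ and the point $1_G$). Set $\epsilon=1/\ell$ (assuming $\ell\ge 2$), decompose the square into $O(\ell^2)$ triangles each $O(1)$-bilipschitz to $\Deps$ by Proposition \ref{proposition:web}, and extend $\gamma_w$ over the $1$-skeleton by sending every interior vertex to $1_G$ and every interior edge to a constant-speed geodesic. Because $\operatorname{diam}(B)=O(1)$, each interior--boundary edge has length $O(1)$ over a domain edge of length $\Theta(\epsilon)$, so the extension is $O(\ell)$-Lipschitz on the $1$-skeleton, whence the boundary loop $\gamma_\Delta$ of each triangle (which lies in $B$) is $O(1)$-Lipschitz. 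By Proposition \ref{proposition:fillingspan} each $\gamma_\Delta$ bounds an $O(1)$-Lipschitz disk; rescaling these disks into the triangles by the factor $1/\epsilon$ produces an $O(\ell)$-Lipschitz filling of $\gamma_w$, which after the usual bilipschitz identification of $D^2$ with $[0,1]^2$ is the desired $C\ell(w)$-Lipschitz filling over the square.

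Finally the sixth rule is a dictionary entry. If $v\leadsto\varepsilon$ holds for all relations $v$ with a uniform constant, that is exactly the hypothesis of Proposition \ref{proposition:words}, so $G$ is Lipschitz $1$-connected. Conversely, if $G$ is Lipschitz $1$-connected then for a relation $v$ the loop $\gamma_v$ is $O(\ell(v))$-Lipschitz, hence bounds an $O(\ell(v))$-Lipschitz disk, and Corollary \ref{corollary:square} converts this into an $O(\ell(v))$-Lipschitz filling of $\gamma_v$ over the unit square, i.e. $v\leadsto\varepsilon$. The subtlety that recurs throughout --- and the one thing really requiring care --- is that every bound is measured against $\ell(v)$ for the ``source'' word $v$, so each reparameterization must be checked not to blow this up; this is automatic for the template decomposition in the fifth rule and for the fixed-size juxtaposition in the second, and is exactly what the auxiliary hypothesis controls in the first.
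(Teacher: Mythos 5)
Your proposal is essentially correct and, for rules 1--4 and 6, follows the same route as the paper: the paper dispatches the first item by citing Lemma \ref{lemma:reparameterization}, proves the second and third by exactly your horizontal juxtaposition and vertical stacking (your remark that the $i$-th homotopy must be left-translated by the element represented by $v_1\cdots v_{i-1}$ is a detail the paper suppresses), gives an explicit folding formula for $ww^{-1}\leadsto\varepsilon$ equivalent to yours, and derives the sixth item from Proposition \ref{proposition:words}. Where you genuinely diverge is the fifth item: the paper chooses $K$ with $\cS^K\supset U$, decomposes the square into a constant top rectangle plus a single row of $\ell$ squares of side $1/\ell$, places on each vertical edge the path $\gamma_{w_i}$ of a bounded-length word representing the prefix $w_1\cdots w_i$, and fills each square by Proposition \ref{proposition:fillingspan}; you instead triangulate at scale $1/\ell$ via Proposition \ref{proposition:web} and cone to $1_G$ along minimizing geodesics, using only the metric boundedness of the prefixes. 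Both arguments reduce to filling uniformly bounded loops via Proposition \ref{proposition:fillingspan}; yours trades the ``some power $\cS^K$ contains $U$'' step for completeness (Hopf--Rinow) and uses $O(\ell^2)$ cells rather than $O(\ell)$, which is harmless. One caution: your quantitative claims about the correcting reparameterizations---an ``$O(n)$-Lipschitz'' self-map of the square in rule 2 and an ``$O(\max\{1,\ell(w)/\ell(v)\})$-bilipschitz'' one in rule 1---are not literally correct when the constituent word lengths are very unequal, and a boundary correspondence that collapses the constant side edges to points can never be realized bilipschitzly; either compose and track the local products of Lipschitz constants (which do give the right bound), or simply invoke Lemma \ref{lemma:reparameterization}, which is exactly what the paper does and which you already list among your tools, so the conclusions are unaffected.
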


\begin{proof}
The first item follows immediately from Lemma \ref{lemma:reparameterization}.

To prove the second item, subdivide the unit square into rectangles of the form $\left[\frac{i-1}{n},\frac{i}{n}\right]\times[0,1]$ for $i=1,\ldots,n$. Suppose given $(v_1,w_1)\in\cC_1,\ldots,(v_n,w_n)\in\cC_n$. By hypothesis, there exists a homotopy $f_i:[0,1]\times [0,1]\To G$ from $v_i$ to $w_i$ with $\Lip(f_i)=O(\Lip(v_i))$. Define a map $f:[0,1]\times[0,1]$ by putting a rescaled copy of $f_i$ in $\left[\frac{i-1}{n},\frac{i}{n}\right]\times[0,1]$---that is, for $0\leq t \leq 1$ and $i=1,\ldots,n$, we take
$$f\left(\frac{i-1+t}{n},y\right)=f_i(t,y).$$
Thus $\Lip(f)=O(\max\{n\Lip(f_i):i=1\ldots n\})=O(\Lip(v))$ because $n$ is fixed. Since $f$ restricted to the top of the unit square is a reparameterization of $v_1\ldots v_n$, and $f$ restricted to the bottom of the unit square is a reparameterization of $w_1\ldots w_n$, we are done.

The proof of the third item is similar: divide the unit square into two rectangles $[0,1]\times\left[0,\frac{1}{2}\right]$ and $[0,1]\times\left[\frac{1}{2},1\right]$ and put a rescaled copy of the homotopy from $v_1$ to $v_2$ into the bottom rectangle and a rescaled copy of the homotopy from $v_2$ to $v_3$ into the top rectangle to get the desired homotopy $v_1$ to $v_3$.

To prove the fourth item, let $w\in\cS^*$, and let $f:[0,1]\times[0,1]$ be defined by
$$f(t,s)=\gamma_w(2\max\{0,\min\{t-s,1-t-s\}\}).$$
The reader may check that $f$ is a $\Lip(\gamma_{ww^{-1}})$-Lipschitz filling of $\gamma_{ww^{-1}}$.

To prove the fifth item, let $K$ be such that $\cS^K$ contains $U$. Given $w\in\cD$, let $\ell=\ell(w)$ so that we can write $w=s_1\ldots s_\ell$. We will find a $O(\ell)$ filling $f$ of $\gamma_w$ as follows. Subdivide $[0,1]\times[0,1]$ into the rectangle $[0,1]\times \left[\frac{1}{\ell},1\right]$ together with the squares of the form $\left[\frac{i-1}{\ell},\frac{i}{\ell}\right]\times\left[0,\frac{1}{\ell}\right]$. Define $f$ to be constant on $[0,1]\times \left[\frac{1}{\ell},1\right]$. For each $i$, choose $w_i\in\cS^K$ such that $w_i=_G s_1\ldots s_i$ and take $f$ to be $\gamma_{w_i}$ on the vertical edge $\frac{i}{\ell}\times\left[0,\frac{1}{\ell}\right]$---that is
$$f\left(\frac{i}{\ell},\frac{1-t}{\ell}\right)=\gamma_{w_i}(t)$$
for $0\leq t \leq 1$ and $i=0,\ldots,n$. We may extend $f$ over each square with Lipschitz constant $O(\ell)$ by Proposition \ref{proposition:fillingspan}.

The sixth item is a consequence of Proposition \ref{proposition:words}.
\end{proof}

\subsection{Normal form triangles.}

We now discuss normal forms $\omega$ and $\omega$-triangles. Using a technique of Gromov, we shall see that $G$ is Lipschitz 1-connected if $\omega$-triangles are Lipschitz 1-connected.

\begin{definition}
A normal form for a compactly generated group $G$ equipped with compact generating set $\cS$ is a map
$$\omega:G\To \cS^*$$
such that $\ell(\omega(g))=O(|g|_\cS)$ for $g\in G$. If $\omega$ is a normal form, then an $\omega$-triangle is a word in $\cS^*$ of the form
$$\omega(g_1)\omega(g_2)\omega(g_3)$$
where $g_1 g_2 g_3=_{G}1.$
\end{definition}

\begin{figure}[t]
\labellist
\small\hair 2pt


\pinlabel $\gamma_{\omega(\gamma(\frac{1}{2}))}$ at 215 162
\pinlabel $\gamma_{\omega(\gamma(\frac{1}{2})^{-1})}$ at 375 162
\pinlabel $\gamma_{\omega(\gamma(\frac{1}{4}))}$ at 175 98
\pinlabel $\gamma_{\omega(\gamma(\frac{1}{4})^{-1}\gamma(\frac{1}{2}))}$ at 255 98
\pinlabel $\gamma_{\omega(\gamma(\frac{1}{2})^{-1}\gamma(\frac{3}{4}))}$ at 335 98
\pinlabel $\gamma_{\omega(\gamma(\frac{3}{4})^{-1})}$ at 415 98
\endlabellist

\centering
\centerline{\psfig{file=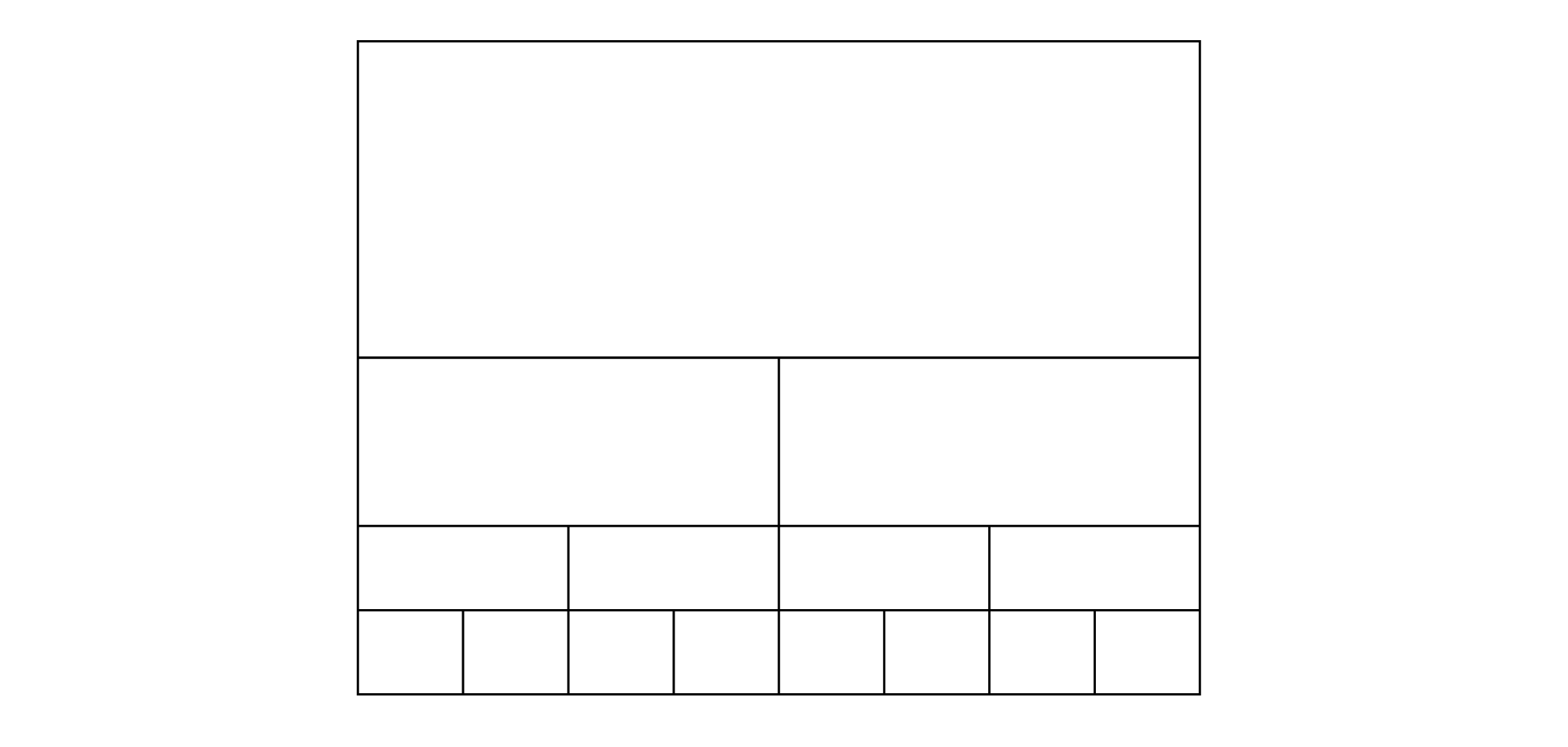,scale=100}}
\caption{This figure indicates how to fill $\gamma=\gamma_w$ for an arbitrary relation $w\in\cS^*$ given that one knows how to fill $\omega$-triangles. The top edge and all the vertical edges are taken to be constant, and each horizontal edge is understood to be an appropriate translate of its label, so that each rectangle represents an $\omega$-triangle, except the bottom row. The bottom edge is taken to be $\gamma_w$, and there is a row of squares along the bottom which may be filled by Proposition \ref{proposition:fillingspan}}
\label{figure:gromov}
\end{figure}

\begin{lemma}
\label{lemma:gromov}
Let $\omega:G\To\cS^*$ be a normal form. If $\Delta\leadsto \varepsilon$ for $\omega$-triangles $\Delta$, then $G$ is Lipschitz 1-connected.
\end{lemma}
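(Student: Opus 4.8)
The plan is to build, following the technique of Gromov referred to above, a van~Kampen--like diagram out of $\omega$-triangles arranged in a balanced binary tree. By the last item of Lemma~\ref{lemma:big} it suffices to show $v\leadsto\varepsilon$ for every relation $v\in\cS^*$, i.e.\ that $\gamma_v$ admits an $O(\ell(v))$-Lipschitz filling over the unit square. Two inputs will be used repeatedly: the hypothesis gives a \emph{uniform} constant $C$ with the property that every $\omega$-triangle $\Delta$ bounds a $C\ell(\Delta)$-Lipschitz disk; and Proposition~\ref{proposition:fillingspan} (equivalently the fifth item of Lemma~\ref{lemma:big}) gives a uniform constant $C'$ such that $\gamma_{s\,\omega(s)^{-1}}$ bounds a $C'$-Lipschitz disk for every $s\in\cS$, since $\cS$ is compact and $\ell(\omega(s))=O(1)$, so these loops have uniformly bounded length and stay in a bounded neighbourhood of $1_G$. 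Without loss of generality $\omega(1_G)=\varepsilon$, and, after padding a given relation $w=s_1\cdots s_\ell$ with trivial letters, $\ell=2^k$.

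For a dyadic subinterval $[a2^m,(a+1)2^m]\subseteq[0,\ell]$ set $h_{m,a}=s_{a2^m+1}\cdots s_{(a+1)2^m}\in G$, so that $h_{0,a}=s_{a+1}$, $h_{k,0}=1_G$ (as $w$ is a relation), $h_{m,a}=h_{m-1,2a}h_{m-1,2a+1}$, and $|h_{m,a}|_\cS\le 2^m$. Put $W_m=\omega(h_{m,0})\omega(h_{m,1})\cdots\omega(h_{m,2^{k-m}-1})$, so $W_k=\varepsilon$ while $\gamma_{W_0}$ is the concatenation of translates of the curves $\gamma_{\omega(s_i)}$. Decompose the unit square into a bottom strip $[0,1]\times[0,\tfrac1\ell]$ cut into $\ell$ squares of side $\tfrac1\ell$, together with strips $S_m=[0,1]\times[y_{m-1},y_m]$ for $m=1,\dots,k$, where $y_0=\tfrac1\ell$ and $y_m-y_{m-1}=\tfrac{2^{m-1}}\ell$; a telescoping computation gives $y_k=1$, so the strips exactly fill the square. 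Cut $S_m$ by vertical lines into $2^{k-m}$ rectangles $R_{m,a}$ of dimensions $\tfrac{2^m}\ell\times\tfrac{2^{m-1}}\ell$. I would then prescribe $f$ on the $1$-skeleton: $\gamma_w$ on the bottom of the square, $\gamma_{W_m}$ along the horizontal line at height $y_m$ (with $W_0$ at height $\tfrac1\ell$), all vertical edges constant; the $i$-th bottom square then carries $\gamma_{s_i}$ below and $\gamma_{\omega(s_i)}$ above, and $R_{m,a}$ carries $\gamma_{\omega(h_{m-1,2a})},\gamma_{\omega(h_{m-1,2a+1})}$ below and $\gamma_{\omega(h_{m,a})}$ above. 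All of this is consistent because $h_{m,a}=h_{m-1,2a}h_{m-1,2a+1}$; in particular the left and right edges of the whole square are constantly $1_G$, and the top edge is constantly $1_G$ since $W_k=\varepsilon$.

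It remains to fill each cell with Lipschitz constant $O(\ell)$. The $i$-th bottom square has boundary a reparametrization of $\gamma_{s_i\,\omega(s_i)^{-1}}$; by the uniform bound above and Corollary~\ref{corollary:square} it bounds a $C''$-Lipschitz disk, and rescaling affinely into a square of side $\tfrac1\ell$ produces a piece of Lipschitz constant $O(\ell)$. The boundary of $R_{m,a}$ traces $\gamma$ of the word $\omega(h_{m-1,2a})\,\omega(h_{m-1,2a+1})\,\omega(h_{m,a})^{-1}$; since $h_{m-1,2a}h_{m-1,2a+1}h_{m,a}^{-1}=1_G$, the word $\omega(h_{m-1,2a})\,\omega(h_{m-1,2a+1})\,\omega(h_{m,a}^{-1})$ is an $\omega$-triangle of length $O(2^m)$, hence bounds an $O(2^m)$-Lipschitz disk; the difference between $\omega(h_{m,a}^{-1})$ and $\omega(h_{m,a})^{-1}$ is absorbed by gluing on the $O(2^m)$-Lipschitz filling of the $\omega$-triangle $\omega(h_{m,a}^{-1})\,\omega(h_{m,a})\,\omega(1_G)$, so $\gamma$ of $\omega(h_{m-1,2a})\,\omega(h_{m-1,2a+1})\,\omega(h_{m,a})^{-1}$ bounds an $O(2^m)$-Lipschitz disk. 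Since $R_{m,a}$ is uniformly bilipschitz to a square of side $\asymp\tfrac{2^m}\ell$, the affine rescaling has stretch $O(\ell/2^m)$, and composing it with this filling (after the boundary reparametrization allowed by Lemma~\ref{lemma:reparameterization}) gives a piece of Lipschitz constant $O(2^m)\cdot O(\ell/2^m)=O(\ell)$. The cells have disjoint interiors and agree on shared edges by construction, so assembling them yields a well-defined $O(\ell)$-Lipschitz filling of $\gamma_w$, as required.

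The routine parts will be checking that the boundary curves and their translates match along shared edges, that the heights telescope to $1$, and that the various boundary reparametrizations cost only uniform constants. The one point that must be handled with care is the scaling: an $\omega$-triangle appearing at ``level $m$'' has length comparable to $2^m$ but lives on a cell of diameter comparable to $2^m/\ell$, and it is exactly this cancellation that makes all $O(\ell)$ cells of the diagram contribute Lipschitz constant $O(\ell)$ rather than something growing with $\ell$. A minor nuisance, noted above, is that reading the boundary of $R_{m,a}$ naturally produces $\omega(h_{m,a})^{-1}$ rather than $\omega(h_{m,a}^{-1})$, which is what makes the word a genuine $\omega$-triangle; this is repaired by one further application of the hypothesis.
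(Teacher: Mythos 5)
Your proof is correct, and it is exactly the argument that the paper outsources to \cite[Proposition~8.14]{ry} and sketches in Figure~\ref{figure:gromov}: a dyadic binary tree of $\omega$-triangles with a bottom row of small squares filled by Proposition~\ref{proposition:fillingspan}. You have merely written out in full the details that the paper leaves to the figure and the citation, including the small but genuine point that reading off $\partial R_{m,a}$ produces $\omega(h_{m,a})^{-1}$ rather than $\omega(h_{m,a}^{-1})$, which the figure glosses over.
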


\begin{proof}
This is proved in \cite[Proposition 8.14]{ry}. The proof is sketched in Figure \ref{figure:gromov}.
\end{proof}

\section{Tame subgroups and the multiamalgam.}
\label{section:multiamalgam}

\paragraph{Assumptions.} From here on, we specialize to the case where $G=U\rtimes A$, where $U$ and $A$ are contractible Lie groups, with $A$ abelian and $U$ a closed group of strictly upper triangular real matrices.

\paragraph{Standard solvable groups.} Observe that $A$ acts on the abelianization $U/[U,U]$. Fix a norm on the vector space $U/[U,U]$. If there is no vector $X$ such that $\lim_{n\To\infty}\frac{1}{n}\log
\|a^n\cdot X\|\To 0$ for all $a\in A$, we say that $G$ is standard solvable.

In this section we will be interested in the structure and geometry of standard solvable groups. \S \ref{subsection:weights} will describe the so-called standard tame subgroups of a standard solvable group. Lemma \ref{lemma:freehomotopy} will show that how to find Lipschitz fillings for words which already represent the identity in the free product of the standard tame subgroups. Theorem \ref{theorem:multiamalgam}---quoted from \cite{ct}---will give conditions under which $G$ may be presented as the free product of its standard tame subgroups modulo certain easily understood amalgamation relations.

\subsection{Weights.}
\label{subsection:weights}
Let $G=U\rtimes A$ be standard solvable, and let $\mfu$ be the Lie algebra of $U$, identified as usual with the tangent space of $U$ at $1_U$, and fix any norm $\|\cdot\|$ on $\mfu$. For $a\in A$ we denote the conjugation action of $a$ on $\mfu$ by $\Ad(a)$, so that $\Ad(a)X=\frac{d}{dt}\big|_{t=0}a^{-1}\exp(tX)a$. Observe that $\Hom(A,\RR)$ is a vector space.

\begin{definition}
(See \cite[\S 4.B]{ct}). For a homomorphism $\alpha:A\To \RR$, define the $\alpha$-weight space $\mfua\subset\mfu$ to consist of $0$ together with all $X\in\mfu$ such that for all $a\in A$,
$$\lim_{n\To\infty}\frac{1}{n}\log\|\Ad(a)^nX\|=\alpha(a).$$
Define the set of weights $\cW$ to consist of all $\alpha\in\Hom(A,\RR)$ for which $\dim\mfua>0$.
By a conic subset, we mean the intersection of $\cW$ with an open, convex cone in $\Hom(A,\RR)$ that does not contain $0$. Denote the set of all conic subsets by $\cC$. For $C\in \cC$, let $U_C$ denote the closed connected subgroup of $U$ whose Lie algebra is $\bigoplus_{\alpha\in C}\mfua$, and let $G_C=U_C\rtimes A$. These groups $G_C$ are referred to as standard tame subgroups of $G$ (see remarks in \S \ref{subsection:tame}).
\end{definition}

As an exercise, the reader may wish to compute the weights and weight spaces for a group of Sol type. We have that $\cC$ is finite, that $\mfu=\bigoplus_{\alpha\in\cW}\mfua$, and that $[\mfua,\mfub]\subset \mfu_{\alpha+\beta}$ for $\alpha,\beta\in\cW$ \cite[\S 4.B]{ct}. We now define $H_2(\mfu)$ and recall the definition of $\Kill(\mfu)$ so that we will be able to state Theorem \ref{theorem:MAIN}, our main theorem.

\begin{definition}
\label{definition:killing}
Let $d_3:\bigwedge^3\mfu\To\bigwedge^2\mfu$ and $d_2:\bigwedge^2\mfu\To\mfu$ be the maps of $A$-modules induced by taking
$$d_3(x\wedge y \wedge z)=[x,y]\wedge z+[y,z]\wedge x+[z,x]\wedge y$$
$$d_2(x\wedge y)=-[x,y].$$
Define $H_2(\mfu)=\ker(d_2)/\image(d_3).$

Define $\Kill(\mfu)$ to be the quotient of the symmetric square $\mfu\odot\mfu$ by the subspace spanned by elements of the form $[x,y]\odot z-y\odot[x,z]$.
\end{definition}

\paragraph{$H_2(\mfu)$ and $\Kill(\mfu)$ are $A$-representations.} Observe that the natural $A$-action on $\bigwedge^3\mfu$ descends to an $A$-action on $H_2(\mfu)$ because, by the Jacobi-like identity $\Ad(a)[X,Y]=[\Ad(a)X,Y]+[X,\Ad(a)Y]$, the subspaces $\image(d_3)$ and $\ker(d_2)$ are preserved by the action of $A$. Similarly, $\Kill(\mfu)$ is also an $A$-representation. Recall that, for an $A$-representation $V$ we define $V_0$ to consist of $0$ together with vectors $X$ such that $\lim_{n\To\infty}\frac{1}{n}\log\|a^{n}\cdot X\|=0$ for all $a\in A$. We thus define the subspaces $H_2(\mfu)_0$ and $\Kill(\mfu)_0$.

\subsection{Tame subgroups.}
\label{subsection:tame}

\begin{definition}
Given $a\in A$, a vacuum subset for $a$ is a compact $\Omega\subset U$ such that for every compact $K\subset U$, there is some $n>0$ with $\Ad(a)^nK\subset \Omega$.
We say that $G$ is tame if there exists $a\in A$ with a vacuum subset.
\end{definition}

$G$ is tame if and only if there is some $a\in A$ with $\alpha(a)<0$ for all $\alpha\in\cW$, so $G_C$ is tame for $C\in \cC$ \cite[Proposition 4.B.5]{ct}. We now wish to show that if $G$ is tame, then it is Lipschitz 1-connected.  Our starting point is the following.

\begin{proposition}
\label{proposition:square}
Suppose $G$ is tame, then there is some $a\in A$ and a compact generating set $\cS\subset U$ for $U$ such that $\Ad(a)\cS^2\subset \cS$.
\end{proposition}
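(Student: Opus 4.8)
The plan is to exploit tameness directly. By definition, tameness gives some $a_0 \in A$ admitting a vacuum subset $\Omega_0 \subset U$: every compact subset of $U$ is conjugated into $\Omega_0$ by some positive power of $a_0$. The first step is to replace $a_0$ by a suitable power $a = a_0^N$ so that the conjugation action $\Ad(a)$ not merely eventually absorbs compact sets into a fixed compact set, but does so in \emph{one step}. Concretely, I would fix any compact generating neighborhood $\Omega$ of $1_U$ in $U$ (for instance a closed ball), enlarge it if necessary so that $\Omega_0 \subset \Omega$ and $\Omega$ is symmetric and a neighborhood of the identity, and then observe that $\Omega^2$ (or any fixed compact set built from $\Omega$) is compact, so there is $N > 0$ with $\Ad(a_0)^N(\Omega^2) \subset \Omega_0 \subset \Omega$. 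Setting $a = a_0^N$ and $\cS = \Omega$ then gives $\Ad(a)\cS^2 \subset \cS$, and $\cS$ is a compact generating set for $U$ by construction.

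The one subtlety is making sure $\cS$ can simultaneously be taken to generate $U$ and to satisfy the containment. Since $U$ is a connected Lie group, any compact neighborhood of the identity generates it, so there is no tension: I would start with \emph{any} compact symmetric identity-neighborhood $\Omega$ containing $\Omega_0$, note it generates $U$, and then the argument above produces the required $N$. It is worth checking that $\Ad(a)$ maps compact sets to compact sets and is continuous (it is the restriction to $U$ of conjugation by $a$ in $G$, which is a homeomorphism of $G$ preserving $U$ since $U$ is normal), and that $\Omega^2$ is compact (continuous image of $\Omega \times \Omega$ under multiplication). These are all routine.

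I expect no serious obstacle here; the statement is essentially an unwinding of the definition of tameness combined with the passage to a power of $a_0$. The only point requiring a moment's care is the quantifier structure: the vacuum condition says that for \emph{each} compact $K$ there is \emph{some} $n = n(K)$; we need a single $n$ that works for the particular compact set $\Omega^2$ we care about, which is immediate since we only need to absorb one compact set, not all of them simultaneously. After that, the inclusion $\Ad(a_0^n)(\Omega^2) \subset \Omega_0 \subset \Omega$ is exactly the desired $\Ad(a)\cS^2 \subset \cS$.
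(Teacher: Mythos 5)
Your proposal is correct and follows essentially the same strategy as the paper: fix a compact generating set $\cS$ containing the vacuum set, and pass to a power $a=a_0^N$ of the tame element chosen so that $\Ad(a)$ conjugates the compact set $\cS^2$ into the vacuum set, hence into $\cS$. The only cosmetic difference is how the generating set containing the vacuum set is produced---you take a compact symmetric identity neighborhood and use that $U$ is connected (being contractible), while the paper takes a power $\cS_0^M$ of an arbitrary compact generating set after noting some power contains an open neighborhood; both are fine.
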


\begin{proof}
By hypothesis, there is some $b\in A$ with a vacuum subset $\Omega$. Let $\cS_0$ be a compact generating set for $U$. As in the proof of Proposition \ref{proposition:words}, we see that some power of $\cS_0$ contains an open ball around the identity, hence for some $M>0$, the set $\cS_0^M$ contains $\Omega$.  As $\Omega$ is a vacuum set for $b$, there exists $L$ such that $\Ad(b)^L\cS_0^{2M}\subset\Omega$. Taking $a=b^L$ and $\cS=\cS_0^M$, we have that $\cS$ is a generating set because it contains $\cS_0$ (because $1\in\cS_0$ by our standing assumption that generating sets contain the identity), and $\Ad(a)\cS^2=\Ad(b)^L\cS_0^{2M}\subset\Omega\subset\cS$ as desired.
\end{proof}

\begin{proposition}
\label{proposition:tamelip1c}
If $G=U\rtimes A$ is tame, then $G$ is Lipschitz 1-connected.
\end{proposition}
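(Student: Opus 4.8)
The plan is to exploit the strong contraction provided by Proposition \ref{proposition:square}: we have $a\in A$ and a compact generating set $\cS\subset U$ for $U$ with $\Ad(a)\cS^2\subset\cS$. We will enlarge $\cS$ to a compact generating set $\hat{\cS}$ for all of $G$ by throwing in $a^{\pm1}$ (and $1_G$, and making it symmetric), choose the arc system $\gamma_s$ for $\hat{\cS}$ as in \S\ref{subsection:filling}, and then verify the hypothesis of Lemma \ref{lemma:big}(6), namely $v\leadsto\varepsilon$ for all $v\in\hat{\cS}^*$. The key point is that conjugation by $a$ lets us ``push a word into a bounded region'': if $w=s_1\ldots s_k\in\cS^*$, then repeatedly applying $\Ad(a)\cS^2\subset\cS$ shows that $\Ad(a)^{\lceil\log_2 k\rceil}$ sends the element represented by $w$ into $\cS$, and more usefully, one can interleave conjugations so that the whole word $w$ gets ``folded'' into a single generator at logarithmic cost.

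The main technical step I would carry out is a folding lemma of the following shape: for a word $w=s_1\ldots s_k\in\cS^*$ there is a homotopy, of Lipschitz constant $O(k)$, from $\gamma_{a^{-n}wa^{n}}$ (conjugation by a suitable power $a^n$ with $n=O(\log k)$) to $\gamma_{s}$ for a single $s\in\cS$, built by inserting $aa^{-1}$ pairs between consecutive letters and using $\Ad(a)\cS^2\subset\cS$ repeatedly to halve the length at each level. Concretely, $a^{-1}(s_i s_{i+1})a =_G \Ad(a)(s_is_{i+1})\in\cS$, so one pass of this replacement turns a length-$k$ word in $\cS$ conjugated by $a$ into a length-$\lceil k/2\rceil$ word in $\cS$; after $O(\log k)$ passes we reach a single generator, and each pass is implemented by an application of Lemma \ref{lemma:big}(2) and (4) (inserting and cancelling $a^{-1}a$'s and relabelling letters), which is exactly the kind of manipulation the $\leadsto$ calculus is designed for, producing total Lipschitz cost $O(k)$ because only $O(\log k)$ passes occur and each pass costs $O(k)$. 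Combined with the obvious homotopy moving $a^n$ past $w$ (which uses that $\hat{\cS}$ contains $a$ and costs $O(n\cdot k)=O(k\log k)$; one must be slightly careful here, but in fact one can arrange the conjugations to be performed incrementally during the folding so the honest cost stays $O(k)$), this shows $v\leadsto\varepsilon$ for $v$ a relation supported in $\cS^*$.

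Next I would handle a general relation $v\in\hat{\cS}^*$, which also involves the letters $a^{\pm1}$. Writing $v$ and collecting the $A$-part, since $A$ is abelian the exponents of $a$ telescope; using moves from Lemma \ref{lemma:big} one rewrites $v$ (at linear cost) as $u\,a^{m}$ where $u\in\cS^*$ is obtained by conjugating the original $U$-letters of $v$ by appropriate powers of $a$, i.e.\ $u$ is a product of terms $\Ad(a)^{j}(s)$; each such term lies in $\cS$ once $j\ge 0$, and for $j<0$ lies in $\cS^{2^{|j|}}$, so $u\in\cS^*$ has length $O(\ell(v)\cdot 2^{N})$ where $N$ is the largest negative $a$-exponent appearing — this is the one place the bound could blow up, so instead I would do the collecting in the favorable direction, pushing all $a$'s to one side so that only nonnegative powers of $\Ad(a)$ are applied, keeping $\ell(u)=O(\ell(v))$. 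Since $v=_G 1_G$ forces $m=0$ and $u=_G 1_G$, the previous step gives $u\leadsto\varepsilon$, and assembling the rewriting homotopy with it gives $v\leadsto\varepsilon$. Then Lemma \ref{lemma:big}(6) yields that $G$ is Lipschitz $1$-connected.

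The hard part is the bookkeeping in the folding lemma: making sure that the $O(\log k)$ halving passes, together with the conjugations needed to commute the growing power of $a$ through the word, can be organized so that the \emph{total} Lipschitz constant is $O(k)$ rather than $O(k\log k)$ or worse. I expect this is handled exactly as in Gromov's ``fast folding'' / Young's area arguments: perform the conjugation incrementally, folding adjacent pairs as soon as the relevant $a^{-1}a$ is in place, so that at stage $j$ one is manipulating a word of length $O(k/2^{j})$ with an $a$-power of size $O(j)$, giving cost $O(j\cdot k/2^{j})$ at stage $j$ and hence $O(k)$ in total after summing over $j$. Everything else is a routine application of the rules in Lemma \ref{lemma:big}.
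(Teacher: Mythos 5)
Your first half (the folding of a word in $\cS_U^*$ into a single generator by conjugating with a logarithmic power of $a$) is sound and is essentially the paper's device: the paper packages it as the normal form $\omega(u)=a^k s a^{-k}$ with $k=O(\log|u|_{\cS_U})$, and performs all manipulations after inflating the power of $a$ so that everything stays in a compact neighborhood (item 5 of Lemma \ref{lemma:big}). The gap is in your reduction of a \emph{general} relation to a relation in $U$. First, a smaller point: $\cS_U\cup\{a^{\pm1}\}$ does not generate $G$ (it generates $U\rtimes\langle a\rangle$, a proper subgroup whenever $A$ is a nontrivial contractible group), so to invoke Lemma \ref{lemma:big}(6) you must take $\cS=\cS_U\cup\cS_A$ with $\cS_A$ a compact generating set of $A$ containing $a$, as the paper does; your collected $A$-part then consists of arbitrary letters of $\cS_A$, not just powers of $a$, and conjugation by a long word in $\cS_A$ can expand a generator of $U$ exponentially.

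Second, and more seriously, the proposed fix ``push the $a$'s to the favorable side so that only nonnegative powers of $\Ad(a)$ are applied'' is not available. If you push all $A$-letters to the left, the letter $s$ at a given position gets replaced by $\Ad(a)^{q}s$ where $q$ is the net $a$-exponent to its right; if you push to the right, it gets $\Ad(a)^{-p}s$ where $p$ is the net exponent to its left; and for a relation $p+q=0$, so \emph{both directions apply the same power}, whose sign is dictated by the word and can be expanding. For a concrete failure, take $U=\RR^2$, $A=\RR$ acting by $e^{-t}$ (a tame group), $s,b\in\cS_U$, and the relation $v=a^{j}sa^{-j}\,b\,a^{j}s^{-1}a^{-j}\,b^{-1}$ of length $O(j)$: either collection produces the word $(\Ad(a)^{-j}s)\,b\,(\Ad(a)^{-j}s^{-1})\,b^{-1}$, whose $\cS_U$-length is of order $e^{cj}$, exponential in $\ell(v)$, so the homotopy you build is nowhere near $O(\ell(v))$-Lipschitz. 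This is exactly the difficulty the paper's proof is structured to avoid: instead of verifying Lemma \ref{lemma:big}(6) directly, it invokes Lemma \ref{lemma:gromov}, so that only $\omega$-triangles need to be filled; there the $U$-parts are already compressed to $a^{k}sa^{-k}$, and conjugation by an arbitrary $b\in\cS_A^*$ is handled by first raising the $a$-power to $K=C\ell(b)+k$, so that the contraction by $a^{K}$ dominates whatever expansion $b$ causes and the whole homotopy stays in a bounded neighborhood of the identity. Without the Gromov triangle reduction (or an equivalent mechanism that re-compresses intermediate words), your direct attack on arbitrary relations does not give the linear bound.
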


\paragraph{Remark.} Probably, a tame group $G$ is CAT(0) for some choice of metric, but we do not know how to prove this, so we give a combinatorial proof using Lemma \ref{lemma:gromov}.

\begin{proof}
Fix $a\in A$ and a compact generating set $\cS_U\subset U$ such that $\Ad(a)\cS_U^2\subset\cS_U$ as in Proposition \ref{proposition:square}, let $\cS_A$ be a generating set for $A$ with $a\in\cS_A$, and let $\cS=\cS_U\cup\cS_A$, so that $\cS$ is a generating set for $G$. Note that $Ad(a)(s)=s$ for $s\in \cS_A$, and observe that if $\ell>\lceil\log_2 j\rceil>0$, then
$$\Ad(a)^\ell\left(\cS_U^j\right)\subset\Ad(a)^{\ell-1}\left(\cS_U^{\lceil\frac{j}{2}\rceil}\right)
\subset\Ad(a)^{\ell-2}\left(\cS_U^{\lceil\frac{1}{2}\lceil\frac{j}{2}\rceil\rceil}\right)
\subset$$
$$\ldots
\subset\Ad(a)^{\ell-\lceil\log_2 j\rceil}(\cS_U)\subset\cS_U,$$
because the function $f:j\mapsto\lceil\frac{j}{2}\rceil$ satisfies $f^{\lceil\log_2 j\rceil} j=1$ for natural numbers $j$. In other words, given $u\in U$, there exist $k\leq \lceil\log_2|u|_{\cS_U}\rceil$ and $s\in\cS_U$ such that $u=_G a^{k}sa^{-k}$. It follows, letting $\phi_A:G\To A$ denote projection, that we may define a normal form $\omega:G\To\cS^*$ such that
\begin{itemize}\item for any $g\in G$, the word $\omega(g)$ is given by $\omega(\phi_A(g))\omega(\phi_A(g)^{-1}g)$,
\item for $g\in A$, the word $\omega(g)\in\cS_A^*$ is a minimal length word representing $g$,
\item and for $g\in U\setminus\{1\}$, $\omega(g)$ is of the form $a^{k}sa^{-k}$ where $s\in\cS_U$ and $0\leq k=O(\log|g|_{\cS_U})$.\end{itemize}
To check that this is a normal form---i.e., that $\ell(\omega(g))=O(|g|_\cS)$, note that $|\phi_A(g)|_\cS\leq |g|_\cS$, and $|\phi_A(g)^{-1}g|_{\cS_U}=O(\exp|g|_\cS)$, so that
$$|\phi_A(g)^{-1}(g)|_\cS=O(\log(O(\exp|g|_\cS)))=O(|g|_\cS)$$
because $U$ is at most exponentially distorted in $G$ \cite[Proposition 6.B.2]{ct}.

It will suffice to show that $\Delta\leadsto \varepsilon$ for $\omega$-triangles $\Delta\in\cS^*$.  An $\omega$-triangle $\Delta$ has the form
$$a_1\omega(u_1)a_2\omega(u_2)a_3\omega(u_3),$$
where for $i=1,2,3$, we have $u_i\in U$ and $a_i\in\cS_A^*$ with the $a_i$ and $u_i$ satisfying $(\Ad(a_3a_2)u_1)(\Ad(a_3)u_2)u_3=1$. To show that $\Delta\leadsto\varepsilon$, it thus suffices to establish the following two facts.
\begin{itemize}
\item $\omega(u)b\leadsto b\omega(b^{-1}ub)$ for $u\in U$ and $b\in \cS_A^*$.
\item $\omega(u_1)\omega(u_2)\omega(u_3)\leadsto\varepsilon$ for $u_1,u_2,u_3\in U$ such that $u_1u_2u_3=1$.
\end{itemize} 
Lemma \ref{lemma:big} will be crucial for showing the first fact---in particular, we use the Lemma to provide homotopies between words which stay inside a bounded neighborhood of the identity.

\paragraph{Conjugation.} To show that $\omega(u)b\leadsto b\omega(b^{-1}ub)$ for $u\in U$ and $b\in \cS_A^*$, note that $\omega(u)$ may be written as $a^{k}sa^{-k}$ for some $s\in\cS_U$ and $k\geq 0$. Because $\Ad(a)\cS^{2}\subset\cS$, there is some $C\geq 1$ such that, for all $a'\in\cS_A$, we have $\Ad(a)^{C}\Ad(a')\cS\subset\cS$. Let $K=C\ell(b)+k$, so that $\Ad(a)^{K-k}\Ad(b')\cS\subset\cS$ for any word $b'\in \cS_A^*$ with $\ell(b')\leq\ell(b)$, and let $s',s''\in\cS$ be given by $s'=\Ad(a)^{K-k}s$ and $s''=b^{-1}s'b$. We homotope as follows, liberally using Lemma \ref{lemma:big}.
$$\omega(u)b=a^{k}sa^{-k}b\leadsto a^{K}a^{k-K}sa^{K-k}a^{-K}b$$
$$\leadsto a^{K}s'ba^{-K}\leadsto a^{K}bb^{-1}s'ba^{-K}\leadsto
ba^{K}s''a^{-K}\leadsto b\omega(b^{-1}ub).$$

\paragraph{Filling $\omega$-triangles in $U$.} To show that $\omega(u_1)\omega(u_2)\omega(u_3)\leadsto\varepsilon$ for $u_1,u_2,u_3\in U$ such that $u_1u_2u_3=1$, write $\omega(u_i)$ as $a^{k_i}s_ia^{-k_i}$ for $i=1,2,3$, with $k_i\geq 0$ and $s_i\in\cS_U$. Let $K=k_1+k_2+k_3$ and let $s_i'=\Ad(a^{k_i-K})s_i\in\cS_U$. We homotope as follows.
$$\omega(u_1)\omega(u_2)\omega(u_3)
=(a^{k_1}s_1a^{-k_1})(a^{k_2}s_2a^{-k_2})(a^{k_3}s_3a^{-k_3})$$
$$\leadsto (a^{K}a^{k_1-K}s_1a^{K-k_1}a^{-K})(a^{K}a^{k_2-K}s_2a^{K-k_2}a^{-K})
(a^{K}a^{k_3-K}s_3a^{K-k_3}a^{-K})$$
$$\leadsto (a^{K}s_1'a^{-K})(a^{K}s_2'a^{-K})
(a^{K}s_3'a^{-K})
\leadsto a^{K}s_1's_2's_3'a^{-K}\leadsto a^Ka^{-K}\leadsto\varepsilon.$$
\end{proof}

\subsection{Filling freely trivial words.}
\label{subsection:free}
We now return to the case where the standard solvable group $G=U\rtimes A$ is not necessarily tame. Recall that the collection of conic subsets $\cC$ is finite. For $C\in\cC$, let $G_C$ be the tame group $U_C\rtimes A$, and let $\cS_{G_C}$ be a compact generating set for this group. Let $H=\Asterisk_{C\in\cC} G_C$, and let $\cS_H\subset H$ be the union of the $\cS_{G_C}$. There is a natural map from $H$ to $G$. Lemma \ref{lemma:freehomotopy} will show that if $w\in\cS_H^*$ represents the identity in $H$, then its image in $G$ admits a $O(\ell(w))$-Lipschitz filling. We will need the following auxiliary results first---the reader should probably skip directly to the proof of Lemma \ref{lemma:freehomotopy} to understand the point of these propositions. 

\begin{itemize}
\item Proposition \ref{proposition:free} shows that a word $w\in\cS_H^*$ representing the identity in $H$ may be reduced to the identity by repeated deletion of subwords $r_j\in\cS_{G_{C_j}}^*$ such that $r_j$ represents the identity in $G_{C_j}$.
\item Proposition \ref{proposition:rectangle} describes an appropriately Lipschitz rectangular homotopy between words obtained by these deletions.
\item Proposition \ref{proposition:trivial} describes a part of the homotopy given in Proposition \ref{proposition:rectangle}.
\end{itemize}

\begin{proposition}
\label{proposition:free}
Given a word $w\in\cS_H^*$ such that $w=_H 1$, there exists a natural number $n\leq \ell(w)$ and, for $j=0,\ldots,n$, words $a_j,r_j,b_j\in\cS_H^*$ with the following properties.
\begin{itemize}
\item $w=_{\cS_H^*} a_0r_0b_0$.
\item $a_n,r_n,b_n=\varepsilon$.
\item For all $j=0,\ldots, n$, there is some $C_j\in\cC$ such that $r_j\in\cS_{G_{C_j}}^*$ and $r_j=_{G_{C_j}} 1$.
\item $a_{j}b_{j}=_{\cS_H^*} a_{j+1}r_{j+1}b_{j+1}$ for $j=0,\ldots, n-1$.
\end{itemize}
\end{proposition}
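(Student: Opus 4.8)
The plan is to argue by induction on the word length $\ell(w)$, at each step peeling off a single subword that lies in one free factor and is trivial there. For the base case $\ell(w)=0$ we have $w=\varepsilon$ and may take $n=0$ and $a_0=r_0=b_0=\varepsilon$: the first two bullets are immediate, the third holds because $\varepsilon\in\cS_{G_C}^*$ represents $1$ for any $C\in\cC$, and the fourth is vacuous.

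For the inductive step, assume $w\neq\varepsilon$ and $w=_H 1$. The key step is to produce a factorization $w=_{\cS_H^*}a_0r_0b_0$ in which $r_0$ is a \emph{nonempty} word lying in $\cS_{G_{C_0}}^*$ for some $C_0\in\cC$ and satisfying $r_0=_{G_{C_0}}1$. To do this, assign to each letter of $w$ some conic set whose generating set contains it, and write $w$ as the concatenation $w_1w_2\cdots w_k$ of its maximal runs of letters with a common assigned conic set, so that each $w_t$ is a nonempty word in $\cS_{G_{D_t}}^*$ with $D_t\neq D_{t+1}$; since $w\neq\varepsilon$ we have $k\geq 1$. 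Let $g_t\in G_{D_t}$ be the element represented by $w_t$, so that $g_1g_2\cdots g_k=_H 1$ with the $g_t$ living in free factors indexed by a sequence with no two consecutive terms equal. By the normal form theorem for free products this is impossible unless $g_t=1_{G_{D_t}}$ for some $t$ (were every $g_t$ nontrivial, $g_1\cdots g_k$ would already be a reduced expression for a nontrivial element of $H$). Fix such a $t$ and set $r_0=w_t$, $C_0=D_t$, $a_0=w_1\cdots w_{t-1}$ and $b_0=w_{t+1}\cdots w_k$.

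Now $a_0b_0$---the word $w$ with the block $r_0$ removed---still represents $1$ in $H$, since deleting $r_0$ alters the represented element by the factor $g_t=1$; moreover $\ell(a_0b_0)=\ell(w)-\ell(r_0)\leq\ell(w)-1$ because $r_0$ is nonempty. Apply the inductive hypothesis to $a_0b_0$ to obtain $n'\leq\ell(a_0b_0)$ and words $a_j',r_j',b_j'$ for $j=0,\ldots,n'$ having the asserted properties relative to $a_0b_0$. Put $n=n'+1$ and, for $j=1,\ldots,n$, set $(a_j,r_j,b_j)=(a'_{j-1},r'_{j-1},b'_{j-1})$. The first bullet is our factorization of $w$; the second follows from $a_n=a'_{n'}=\varepsilon$ and likewise for $r_n,b_n$; the third holds for $j=0$ by construction and for $j\geq 1$ by the inductive hypothesis; and for the fourth, $a_0b_0=a'_0r'_0b'_0=a_1r_1b_1$, while $a_jb_j=a'_{j-1}b'_{j-1}=a'_jr'_jb'_j=a_{j+1}r_{j+1}b_{j+1}$ for $1\leq j\leq n-1$. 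Finally $n=n'+1\leq\ell(a_0b_0)+1\leq\ell(w)$, as required.

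The only delicate point is the appeal to the free-product normal form theorem, together with keeping straight the distinction between literal equality of words in $\cS_H^*$ and equality of the elements they represent in $H$; once that is in hand the induction is entirely mechanical, and no metric considerations enter at this stage.
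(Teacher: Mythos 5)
Your proof is correct and takes essentially the same approach as the paper: locate a subword lying in a single free factor $G_{C}$ that represents the identity there (which the paper dispatches with a one-line appeal to ``the theory of free products''), delete it, and recurse. You have simply spelled out both the free-product normal-form argument and the recursion as a formal induction, which is a fine elaboration of the same idea.
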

\begin{proof}
Note that each element of $\cS_H$ lies in some $G_C$. If $w\neq \varepsilon$ represents the identity, then by the theory of free products, $w$ has a (nonempty) subword which is comprised of elements of some $\cS_{G_C}$ and represents the identity in $G_C$. Thus, we may write $w=_{\cS_H^*}a_0r_0b_0$ where $r_0\in \cS_{G_{C_0}}^*$ as desired. Applying this argument recursively, we obtain $a_j,r_j,b_j$ as desired.
\end{proof}

\begin{proposition}
\label{proposition:trivial}
Given a natural number $k$ and $w\in \cS_H^*$, there exists a $O(k+\ell(w))$ Lipschitz map $f:[0,1]\times\left[0,\frac{k}{k+\ell(w)}\right]\To G$ with the following properties.
\begin{itemize}
\item Along the bottom, $f$ is given by $1^k w$, meaning $f(t,0)=\gamma_{1^k w}(t)$ for $0\leq t\leq 1$.
\item Along the top, $f$ is given by $w 1^k$, meaning $f\left(t,\frac{k}{k+\ell(w)}\right)=\gamma_{w 1^k}(t)$ for $0\leq t \leq 1$.
\item $f$ is constant on the sides, so $f(0,s)$ and $f(1,s)$ do not depend on $s$.
\end{itemize}
\end{proposition}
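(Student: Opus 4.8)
The plan is to build $f$ by hand, sliding the block of $k$ trivial letters from the front of $1^k w$ past $\gamma_w$ to the back. Write $m=k+\ell(w)$. We may assume $k\ge 1$ and $\ell(w)\ge 1$: if $\ell(w)=0$ then $1^k w=w1^k$, and if $k=0$ the domain is degenerate, so in either of these cases a constant map does the job. First I would record the estimate $\Lip(\gamma_w)=O(\ell(w))$: indeed $\gamma_w$ is the concatenation of the $\ell(w)$ curves $s_1\cdots s_{i-1}\gamma_{s_i}$, each $O(1)$-Lipschitz since left translation is an isometry and the constants $\Lip(\gamma_s)$ are uniformly bounded, and reparameterizing each onto an interval of length $1/\ell(w)$ multiplies its Lipschitz constant by $\ell(w)$, while a concatenation is Lipschitz with the maximum of the constants of its pieces. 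Let $g\in G$ be the element represented by $w$, so $\gamma_w$ runs from $1_G$ to $g$; note that $\gamma_{1^k w}$ is constant at $1_G$ on $[0,k/m]$ and a rescaled copy of $\gamma_w$ on $[k/m,1]$, while $\gamma_{w1^k}$ is a rescaled copy of $\gamma_w$ on $[0,\ell(w)/m]$ and constant at $g$ thereafter.

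Next I would set, for $s\in[0,k/m]$, the offset $\phi(s)=k/m-s$, and define
$$f(t,s)=\begin{cases}1_G,&0\le t\le\phi(s),\\ \gamma_w\!\left(\frac{m}{\ell(w)}\bigl(t-\phi(s)\bigr)\right),&\phi(s)\le t\le\phi(s)+\ell(w)/m,\\ g,&\phi(s)+\ell(w)/m\le t\le1.\end{cases}$$
Since $\phi(s)\ge 0$ and $\phi(s)+\ell(w)/m=1-s\le 1$, the three cases cover $[0,1]$, and they agree on the overlaps because $\gamma_w(0)=1_G$ and $\gamma_w(1)=g$, so $f$ is well defined and continuous. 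Then I would verify the boundary conditions directly: at $s=0$ the offset is $k/m$, so $f(t,0)=\gamma_{1^k w}(t)$; at $s=k/m$ the offset is $0$, so $f(t,k/m)=\gamma_{w1^k}(t)$; and since $0\le\phi(s)$ and $1\ge\phi(s)+\ell(w)/m$ for every $s$, we have $f(0,s)=1_G$ and $f(1,s)=g$, constant on the two vertical sides.

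Finally I would bound $\Lip(f)$. On the middle region $f$ is $\gamma_w$ precomposed with the affine map $(t,s)\mapsto\frac{m}{\ell(w)}(t-k/m+s)$, whose gradient has norm $O(m/\ell(w))$, so that region is $\Lip(\gamma_w)\cdot O(m/\ell(w))=O(\ell(w))\cdot O(m/\ell(w))=O(m)$-Lipschitz; the other two regions are constant; and $f$ is continuous across the two Lipschitz interfaces, hence globally $O(k+\ell(w))$-Lipschitz, as required. The construction is entirely routine; the one point that needs care is keeping the rescaling factor $m/\ell(w)$ in view throughout the Lipschitz estimate, since it is precisely the cancellation $O(\ell(w))\cdot O(m/\ell(w))=O(m)$ that produces the claimed linear bound rather than something worse, and separately handling the degenerate cases $k=0$ and $w=\varepsilon$ where the formula above is not literally defined.
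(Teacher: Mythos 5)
Your construction is correct and is essentially the paper's proof: the paper extends $\gamma_{1^k w}$ to a curve $\gamma:\RR\To G$ by constants on both ends and sets $f(t,s)=\gamma(t+s)$, and your piecewise formula with offset $\phi(s)=k/m-s$ is exactly this shear written out case by case, with the same $O(\ell(w))\cdot O(m/\ell(w))=O(m)$ cancellation implicit in the Lipschitz bound. (Only a trivial slip: when $k=0$ the degenerate-domain map should be $\gamma_w$ itself rather than a constant, but your general formula already handles that case.)
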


\begin{proof}
Let $\gamma:\RR\To G$ be given as follows. $\gamma(t)=1_G$ for $t\leq 0$, $\gamma(t)=\gamma_{1^k w}(t)$ for $0<t\leq 1$, and $\gamma(t)=\gamma_{1^k w}(1)$ for $t>1$. Observe that $\Lip(\gamma)=O(k+\ell(w))$ Then we may take
$$f(t,s)=\gamma(t+s)$$
as our desired filling.
\end{proof}

\begin{proposition}
\label{proposition:rectangle}
Given $a,b\in \cS_H^*$, $r\in\cS_{G_C}^*$ a relation in $G_C$ for some $C\in\cC$, and any natural number $k$, let $\ell=\ell(arb1^k)$ and $h=\frac{\ell(r)}{\ell}$. There exists a $O(\ell)$-Lipschitz map $f:[0,1]\times\left[0,h\right]\To G$ with the following properties.
\begin{itemize}
\item Along the bottom, $f$ is given by $arb1^k$, meaning $f(t,0)=\gamma_{arb1^k}(t)$ for $t\in[0,1].$
\item Along the top, $f$ is given by $ab1^{k+\ell(r)}$, meaning $f\left(t,h\right)=\gamma_{ab1^{k+\ell(r)}}(t)$ for $t\in[0,1]$.
\item $f$ is constant on the sides, so $f(0,s)$ and $f(1,s)$ do not depend on $s$.
\end{itemize}
\end{proposition}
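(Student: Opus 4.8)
The plan is to build $f$ as a composition of three horizontal slabs stacked on top of each other, each realizing one ``move'' in the reduction $arb1^k \leadsto ab1^{k+\ell(r)}$, and then to reparameterize everything so that it fits in the strip of height $h=\ell(r)/\ell$. First I would slide the letters of $r$ to the right past $b$ using Proposition \ref{proposition:trivial}: applying that proposition with the word $r$ in the role of its ``$w$'' and $\ell(b1^k)$ in the role of its ``$k$'', after prepending the constant prefix $\gamma_a$ and reparameterizing, I get an $O(\ell)$-Lipschitz homotopy from $arb1^k$ to $ab1^k r$. (One has to be slightly careful: Proposition \ref{proposition:trivial} moves $1^k$ from the left of $w$ to the right of $w$; here I want to move $r$ rightward past $b1^k$, which is the same picture after relabeling $w\mapsto r$ and with the block of $1$'s replaced by the block $b1^k$ — the proof of \ref{proposition:trivial} via $f(t,s)=\gamma(t+s)$ works verbatim for an arbitrary right-hand block, not just a block of $1_G$'s, so I would either invoke that observation or restate \ref{proposition:trivial} in the slightly more general form.) The height consumed by this slab is $O(\ell(r)/\ell)$, which I will allot proportionally.

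Second, in the middle slab I fill the subword $r$ by a constant loop: since $r=_{G_C}1$ and $G_C$ is tame, $G_C$ is Lipschitz $1$-connected by Proposition \ref{proposition:tamelip1c}, so $\gamma_r$ (viewed in $G_C$, hence in $G$) bounds an $O(\ell(r))$-Lipschitz disk; equivalently $r\leadsto\varepsilon$ in $\cS_{G_C}^*$, and by Lemma \ref{lemma:big} (second and third items) we get $ab1^k r\leadsto ab1^k \leadsto ab1^{k+\ell(r)}$ — the last step being $r\rightsquigarrow 1^{\ell(r)}$, i.e.\ replacing each generator of $r$ one at a time by a copy of $1_G$, which stays in a bounded neighborhood of the identity and so is handled by the fifth item of Lemma \ref{lemma:big} (or simply by Proposition \ref{proposition:smallscale} applied letter by letter). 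Each of these homotopies has Lipschitz constant $O(\ell(r))=O(\ell)$; I place rescaled copies of them into thin horizontal slabs of total height $O(\ell(r)/\ell)$.

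The third and final slab is bookkeeping: I need the output word to be \emph{literally} $ab1^{k+\ell(r)}$ read along $t\in[0,1]$ with the standard parameterization $\gamma_{ab1^{k+\ell(r)}}$, and the sides to be constant, so I append one more $O(\ell)$-Lipschitz reparameterization homotopy (Lemma \ref{lemma:reparameterization}, or really Corollary \ref{corollary:square}'s argument) absorbing the discrepancy between ``concatenate then rescale'' and ``rescale then concatenate.'' Then I stack the three slabs: if $H_1,H_2,H_3$ are the three homotopies defined on $[0,1]\times[0,1]$ with Lipschitz constants $O(\ell)$, I set $f$ on $[0,1]\times[0,h]$ to be a vertically-rescaled juxtaposition, $f(t,s)=H_i(t, (\text{affine rescaling of }s))$ on the $i$-th sub-slab, choosing the three sub-slab heights to be fixed fractions of $h$. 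Vertical rescaling by a factor $O(h^{-1})=O(\ell/\ell(r))$ multiplies the Lipschitz constant in the $s$-direction, but since each $H_i$ can be taken to have Lipschitz constant $O(\ell(r))$ in the $s$-direction (the homotopies above all ``move a word of length $O(\ell(r))$ across distance $O(1)$ in time $1$'', so their vertical speed is $O(\ell(r))$, not $O(\ell)$), the rescaled vertical Lipschitz constant is $O(\ell(r))\cdot O(\ell/\ell(r)) = O(\ell)$, as required. The horizontal Lipschitz constant is unaffected and already $O(\ell)$.

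\textbf{Main obstacle.} The genuinely delicate point is the bookkeeping of Lipschitz constants under the two rescalings that are forced on us: horizontally we must squeeze words of length up to $\ell=\ell(arb1^k)$ into $[0,1]$, and vertically we must squeeze the homotopies into a strip of height only $h=\ell(r)/\ell$, which is tiny when $r$ is short relative to $arb1^k$. The proposal above survives this only because every homotopy used in the middle — sliding $r$ past $b1^k$, killing $r$ inside $G_C$, converting $r$ to $1^{\ell(r)}$ — can be arranged to have vertical speed proportional to $\ell(r)$ rather than to $\ell$; verifying that claim carefully (in particular that the tame filling of $r$ provided by Proposition \ref{proposition:tamelip1c} can be taken to be a homotopy through paths each of length $O(\ell(r))$, so that compressing it vertically by $\ell/\ell(r)$ costs a factor that cancels) is where the real work sits, and I would spell it out rather than leave it to the reader.
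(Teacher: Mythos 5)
Your overall toolkit is the same as the paper's (the shift homotopy of Proposition \ref{proposition:trivial}, the tame filling of $r$ from Proposition \ref{proposition:tamelip1c}, and the observation that a homotopy of vertical speed $O(\ell(r))$ can be compressed into a strip of height $\ell(r)/\ell$ at total cost $O(\ell)$ --- that last bookkeeping paragraph is exactly the calibration the paper builds into the choice $h=\ell(r)/\ell$). But your first slab contains a genuine gap: the claim that the proof of Proposition \ref{proposition:trivial} ``works verbatim for an arbitrary right-hand block'' is false, and it is precisely the point where the real content hides. In the shift homotopy $f(t,s)=\gamma(t+s)$, the constancy of the side edges and the identification of the top path both use that the moving block consists of identity letters: one needs $\gamma$ to be \emph{constant} on the initial interval of length $h'$ swept by the shift. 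If the moving block is $r$ (or, in your relabeling, if the block of $1$'s is replaced by $b1^k$), then the edge of the shifted region, $s\mapsto\gamma(s)$ (respectively the interface with the region over $a$ where you set $f$ constant in $s$), traverses the loop $\gamma_r$ rather than being constant. So the map you describe on slab 1 is either discontinuous across that vertical segment or requires capping off a translate of the loop $\gamma_r$ --- which is exactly the filling of $r$ that you only perform later, in slab 2. A symptom that the homotopy was not actually traced through: the correct top of the shifted path (using $r=_{G}1$ and the constant extension past the endpoint) would be $ab1^{k+\ell(r)}$, not the word $ab1^kr$ you assert.

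The fix is the paper's ordering, which is the reverse of yours: first fill $r$ \emph{in place} inside the sub-square of side $\approx\ell(r)/\ell$ lying over $r$ (Proposition \ref{proposition:tamelip1c}, rescaled --- this is where the $O(\ell(r))$-Lipschitz filling compressed by $\ell/\ell(r)$ gives $O(\ell)$), so that the word becomes $a1^{\ell(r)}b1^k$ with a literal block of identity letters; only then apply Proposition \ref{proposition:trivial} as stated to slide that identity block past $b$, where the hypotheses (constant sides, identification of the top) genuinely hold. If you insist on your order, you must cap the non-constant edge of the shift with a rescaled filling of $r$, at which point you have reproduced the paper's decomposition anyway; your slab 3 then becomes unnecessary (and, as written, you would still owe an argument that the reparameterization homotopy has vertical speed $O(\ell(r))$ rather than $O(\ell)$ before compressing it).
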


\begin{proof}
(See the right hand side of Figure \ref{figure:freefilling}.) Subdivide the rectangle into $[0,1]\times[0,h/2]$ and $[0,1]\times[h/2,1]$. Define $f$ to be $a1^{\ell(r)}b1^k$ on $[0,1]\times\{h/2\}$---i.e.,
$$f(t,h/2)=\gamma_{a1^{\ell(r)}b1^k}(t).$$

First, we extend $f$ over the top rectangle $[0,1]\times [h/2,1]$. For
$$(t,s)\in \left[0,\frac{\ell(a)}{\ell}\right]\times[h/2,1]
\cup \left[1-\frac{k}{\ell},1\right]\times[h/2,1]$$
we have that $\gamma_{a1^{r}b1^k}(t)=\gamma_{ab1^{\ell(r)+k}}(t)$, so we can just set $f$ to be constant vertically---i.e., we define $f(t,s)=\gamma_{ab1^{\ell(r)+k}}(t)$ for these $(t,s)$. To extend $f$ to
$$(t,s)\in\left[\frac{\ell(a)}{\ell},1-\frac{k}{\ell}\right]\times[h/2,1],$$
we simply apply Proposition \ref{proposition:trivial}. Thus, we have given a $O(\ell)$-Lipschitz extension of $f$ over the top rectangle.

Now we extend over the bottom rectangle $[0,1]\times [0,h/2]$. For
$$(t,s)\in \left[0,\frac{\ell(a)}{\ell}\right]\times[0,h/2]
\cup \left[\frac{\ell(ar)}{\ell},1\right]\times[0,h/2]$$
define $f(t,s)=\gamma_{arb1^k}(t)$. Finally, we apply Proposition \ref{proposition:tamelip1c} to extend $f$ over $\left[\frac{\ell(a)}{\ell},\frac{\ell(ar)}{\ell}\right]\times[0,h/2]$, since this is equivalent to filling $r$.
\end{proof}

\begin{lemma}
\label{lemma:freehomotopy}
Recalling the notation introduced at the start of \S\ref{subsection:free}, we have $w\leadsto\varepsilon$ (in $G$) for all $w\in\cS_H^*$ such that $w=_H 1$
\end{lemma}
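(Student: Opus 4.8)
The plan is to combine Proposition \ref{proposition:free}, which reduces a freely trivial word $w$ to the empty word through a sequence of intermediate words $a_j r_j b_j$, with the rectangular homotopies supplied by Propositions \ref{proposition:rectangle} and \ref{proposition:trivial}. The idea is to stack these homotopies vertically into a single filling of the unit square, keeping track of padding by powers of $1_G$ so that everything fits together metrically.

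More precisely, I would proceed as follows. First, apply Proposition \ref{proposition:free} to obtain $n\leq\ell(w)$ and words $a_j,r_j,b_j\in\cS_H^*$ with $w=_{\cS_H^*}a_0r_0b_0$, with $a_n=r_n=b_n=\varepsilon$, with each $r_j$ a relation in some $G_{C_j}$, and with $a_jb_j=_{\cS_H^*}a_{j+1}r_{j+1}b_{j+1}$. The key bookkeeping observation is that, as we pass from $a_jr_jb_j$ to $a_jb_j$ to $a_{j+1}r_{j+1}b_{j+1}$, the total length never exceeds $\ell(w)$ if we allow ourselves to pad with trailing copies of $1_G$; concretely, I would set $k_0=0$ and arrange that at stage $j$ the word under consideration is $a_jr_jb_j1^{k_j}$ with $\ell(a_jr_jb_j1^{k_j})=\ell(w)$ for all $j$, so that $k_j=\ell(w)-\ell(a_jr_jb_j)\geq 0$. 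For each $j$, Proposition \ref{proposition:rectangle} (with $a=a_j$, $b=b_j$, $r=r_j$, $k=k_j$) provides an $O(\ell(w))$-Lipschitz homotopy from $\gamma_{a_jr_jb_j1^{k_j}}$ to $\gamma_{a_jb_j1^{k_j+\ell(r_j)}}$, defined on a horizontal strip of height $\ell(r_j)/\ell(w)$. Since $a_jb_j=_{\cS_H^*}a_{j+1}r_{j+1}b_{j+1}$ and $k_j+\ell(r_j)=k_{j+1}$, the top of the $j$-th strip is a reparameterization of $\gamma_{a_{j+1}r_{j+1}b_{j+1}1^{k_{j+1}}}$, matching the bottom of the $(j+1)$-th strip (up to a harmless reparameterization absorbed via Lemma \ref{lemma:reparameterization} or simply by choosing the parameterizations consistently). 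Because $\sum_{j=0}^{n-1}\ell(r_j)\leq\ell(w)$, the total height of all strips is at most $1$, and we can rescale to fit them into $[0,1]\times[0,1]$ with a bounded multiplicative cost in the Lipschitz constant. The final word $a_nb_n1^{k_n}=1^{\ell(w)}$ represents the identity via a path that is constant, so the top strip closes off trivially (or, if one prefers, one notes $1^{\ell(w)}\leadsto\varepsilon$ directly). Stacking these and reparameterizing, and recalling that the relation is between $\gamma_w$ and $\gamma_\varepsilon$ with all side edges constant, yields the desired $O(\ell(w))$-Lipschitz filling, so $w\leadsto\varepsilon$.

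The main obstacle I anticipate is the metric bookkeeping at the seams between consecutive strips: one must verify that the top boundary of the $j$-th rectangle, namely $\gamma_{a_jb_j1^{k_j+\ell(r_j)}}$, is genuinely the same parameterized path as the bottom boundary of the $(j+1)$-st rectangle, $\gamma_{a_{j+1}r_{j+1}b_{j+1}1^{k_{j+1}}}$, rather than merely representing the same group element. Since $a_jb_j$ and $a_{j+1}r_{j+1}b_{j+1}$ are equal as words in $\cS_H^*$ (not just equal in $G$), the concatenated paths $\gamma_{a_jb_j}$ and $\gamma_{a_{j+1}r_{j+1}b_{j+1}}$ are literally identical, and the padding matches by $k_{j+1}=k_j+\ell(r_j)$; so the seams match on the nose, and no reparameterization lemma is even needed there — the only reparameterizations occur because the strips have different heights and must be rescaled to a common total height, which costs only a bounded factor since every $\ell(r_j)\geq 1$ forces each strip to have height at least $1/\ell(w)$ after the appropriate normalization. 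A secondary point to check is that the side edges of every rectangle are constant and agree with each other (they are all $\gamma_w(0)=1_G$ translated appropriately — in fact constant equal to $1_G$ since $w$ and all intermediate words begin and end at $1_G$ in $H$, hence in $G$), so that the stacked filling has constant sides as required by the definition of $\leadsto$.
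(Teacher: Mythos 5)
Your proposal is correct and follows essentially the same route as the paper: apply Proposition \ref{proposition:free}, pad each intermediate word with $1^{k_j}$ (the paper's $\ell_j=\sum_{j'<j}\ell(r_{j'})$, which equals your $k_j$), and stack the $O(\ell(w))$-Lipschitz strips of height $\ell(r_j)/\ell(w)$ from Proposition \ref{proposition:rectangle}, noting the seams agree on the nose because $a_jb_j=_{\cS_H^*}a_{j+1}r_{j+1}b_{j+1}$. The only cosmetic difference is that no rescaling is needed at the end, since $\sum_j\ell(r_j)=\ell(w)$ makes the strips fill the unit square exactly.
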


\begin{proof} (See Figure \ref{figure:freefilling}.)
Let $w\in \cS_H^*$ be a relation in $H$ and take a sequence of words $a_j,r_j,b_j$, $j=0,\ldots,n$ as in Proposition \ref{proposition:free}. We will define a $O(\ell(w))$-Lipschitz filling $f:[0,1]\times[0,1]\To G$ of $\gamma_w$ as follows. Let $\ell_k=\sum_{j<k}\ell(r_j)$, so that $\ell_0=0$ and $\ell_n=\ell(w)$, and subdivide $[0,1]\times [0,1]$ into rectangles $[0,1]\times[\ell_j,\ell_{j+1}]$ for $j=0,\ldots,n-1$. Set
$$f(t,\ell_j)=\gamma_{a_jr_jb_j1^{\ell_j}}(t),$$
noting that $\ell(a_jr_jb_j1^{\ell_j})=\ell(w)$.

Proposition \ref{proposition:rectangle} now shows that $f$ may be extended over each rectangle $[0,1]\times[\ell_j,\ell_{j+1}]$ with Lipschitz constant $O(\ell(w))$.
\end{proof}

\begin{figure}[t]
\labellist
\small\hair 2pt


\pinlabel $a_0$ at 63 11
\pinlabel $r_0$ at 150 11
\pinlabel $b_0$ at 230 11

\pinlabel $a_1$ at 32 90
\pinlabel $r_1$ at 60 90
\pinlabel $b_1$ at 131 90
\pinlabel $1^{\ell_1}$ at 223 90

\pinlabel $a_2$ at 62 122
\pinlabel $r_2$ at 127 122
\pinlabel $b_2$ at 152 122
\pinlabel $1^{\ell_2}$ at 212 122

\pinlabel $\ldots$ at 140 165

\pinlabel $r_{n-1}$ at 47 204
\pinlabel $1^{\ell_{n-1}}$ at 169 204

\pinlabel $1^{\ell_n}$ at 140 275

\pinlabel $a$ at 331 80
\pinlabel $r$ at 408 80
\pinlabel $b$ at 469 80
\pinlabel $1^{k}$ at 537 80

\pinlabel $1^{r}$ at 408 128
\pinlabel $b$ at 469 128

\pinlabel $a$ at 331 192
\pinlabel $b$ at 372 192

\pinlabel $1^{k+\ell(r)}$ at 490 214

\pinlabel $\text{Prop. \ref{proposition:tamelip1c}}$ at 408 110
\pinlabel $\text{Proposition \ref{proposition:trivial}}$ at 420 158

\endlabellist

\centering
\centerline{\psfig{file=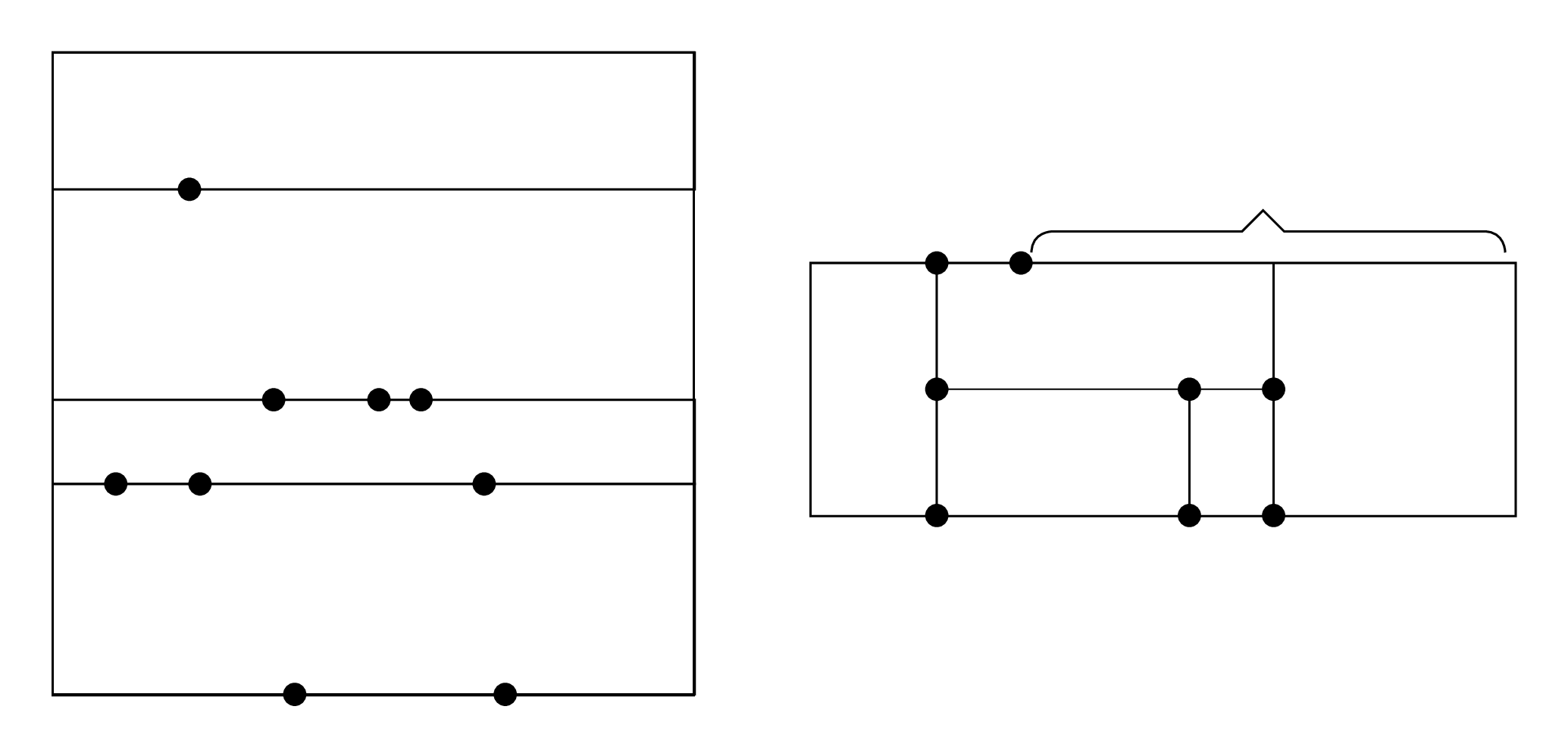,scale=70}}
\caption{The figure on the left depicts our strategy for filling of the freely trivial word $w=a_0 r_0 b_0$, where the $a_j,r_j,b_j$ are as in Proposition \ref{proposition:free}. The figure on the right depicts the proof of Proposition 
\ref{proposition:rectangle} which allows us to fill in each rectangle in the left hand figure.}
\label{figure:freefilling}
\end{figure}

\paragraph{Distortion.} We now see that if $G$ is standard solvable, then elements of $U$ may be expressed much more efficiently in the generators of $G$ than in the generators of $U$.

\begin{proposition}
\label{proposition:shortcuts}
Suppose $G=U\rtimes A$ is standard solvable, $\cS$ is a compact generating set for $G$, and $\cS_U$ is a compact generating set for $U$.

There exists $C>1$ such that if $u\in U\setminus\{1_U\}$, then
$$\frac{1}{C}\log(1+|u|_{\cS_U})\leq |u|_\cS\leq C \log(1+|u|_{\cS_U}).$$
\end{proposition}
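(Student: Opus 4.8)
The plan is to establish the two inequalities separately. The upper bound $|u|_\cS \leq C\log(1 + |u|_{\cS_U})$ is the geometrically substantial half, and it should follow almost immediately from the normal form constructed in the proof of Proposition \ref{proposition:tamelip1c}---at least in the tame case---and more generally from Abels's multiamalgam structure combined with the fact (cited from \cite[Proposition 6.B.2]{ct}) that $U$ is at most exponentially distorted in $G$. Concretely, for a tame group the normal form $\omega$ writes any $u \in U\setminus\{1\}$ as $a^k s a^{-k}$ with $s \in \cS_U$ and $k = O(\log|u|_{\cS_U})$, so $|u|_\cS \leq 2k + 1 = O(\log|u|_{\cS_U})$ directly. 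For the general standard solvable $G$, I would instead argue as follows: each weight space element can be moved into a bounded region by conjugating by a power of a suitable $a\in A$ whose weight pairing is negative on the relevant cone, and the number of conjugating steps needed is logarithmic in $|u|_{\cS_U}$ because conjugation contracts exponentially. Decomposing $u$ (up to bounded error absorbed into $\cS$) across the finitely many conic pieces $C \in \cC$ via the multiamalgam and handling each $U_C$-part by its tame normal form gives $|u|_\cS = O(\log(1+|u|_{\cS_U}))$.

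For the lower bound $\frac{1}{C}\log(1+|u|_{\cS_U}) \leq |u|_\cS$, equivalently $|u|_{\cS_U} \leq \exp(C|u|_\cS) - 1$, I would use a crude but robust counting argument. Since $\cS$ is compact, there is a uniform bound $M$ on $|s|_{\cS_U}$ for all $s \in \cS \cap U$, but of course a word of length $n$ in $\cS$ need not stay in $U$. The point is that conjugation of $\cS_U$ by the $A$-part can expand the $\cS_U$-word-length of elements of $U$ by at most a fixed constant factor per generator applied: writing $u =_G s_1 \cdots s_n$ with $s_i \in \cS$ and $n = |u|_\cS$, one tracks how the $U$-component accumulates, and each step multiplies the relevant $\cS_U$-length bound by at most some constant $\lambda > 1$ (coming from a uniform bound on the operator norms of $\Ad(a^{\pm 1})$ for $a$ in the generating set, applied to $\mfu$, together with the Baker-Campbell-Hausdorff correction terms which only contribute boundedly). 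Hence $|u|_{\cS_U} = O(\lambda^n)$, which rearranges to the claimed inequality after adjusting $C$.

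I expect the main obstacle to be making the upper bound rigorous in the \emph{non-tame} standard solvable case, where one cannot simply invoke a single global normal form as in Proposition \ref{proposition:tamelip1c}. The cleanest route is probably to reduce to the tame subgroups $G_C$: choose for each $C \in \cC$ an element $a_C \in A$ with $\alpha(a_C) < 0$ for all $\alpha \in C$ (using \cite[Proposition 4.B.5]{ct}), and observe that a generic element of $U$ can be written, modulo $[U,U]$ and iterating, as a bounded-length product of elements of the various $U_C$; each such element lies at $\cS$-distance $O(\log(1+\cdot))$ of the identity by the tame argument, and the exponential distortion bound \cite[Proposition 6.B.2]{ct} controls the error from passing between the word metrics. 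Alternatively---and this may be what the author intends---one simply quotes that exponential distortion of $U$ in $G$ is \emph{two-sided}, i.e.\ the distortion function is comparable to $\exp$ from both sides, which is essentially the content of \cite[Proposition 6.B.2]{ct} together with standard solvability ruling out faster-than-exponential distortion; then both inequalities are immediate restatements. I would check which formulation \cite{ct} actually provides and cite accordingly, falling back on the tame-subgroup decomposition if a direct citation is not available.
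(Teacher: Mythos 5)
The paper's own ``proof'' of Proposition~\ref{proposition:shortcuts} consists entirely of citing \cite[Proposition 6.B.2]{ct} (``this follows, with some effort, from\ldots''), so your final fallback---simply quote that in a standard solvable group $U$ is exponentially distorted in $G$ with two-sided control---is exactly what the paper does, and is a correct route.

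Your more detailed sketch for the upper bound, however, has a gap that's worth spelling out. Proposition~\ref{proposition:shortcuts} assumes \emph{only} that $G$ is standard solvable; it does not assume the $H_2(\mfu)_0 = 0$ and $\Kill(\mfu)_0 = 0$ hypotheses needed for the multiamalgam isomorphism $\hU \cong U$ (Theorem~\ref{theorem:multiamalgam}). So ``decomposing $u$ across the conic pieces via the multiamalgam'' is not available here, and in any case the mere group isomorphism would not give the polynomial control on $|x_i|_{\cS_{C_i}}$ in terms of $|u|_{\cS_U}$ that you would need; obtaining such quantitative control is exactly the point of the Cornulier--Tessera trick in the proof of Lemma~\ref{lemma:omega}, and is itself nontrivial. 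A second, related gap: even in a standard solvable group, the weight-zero space $\mfu_0$ can be nonzero---the condition $(U/[U,U])_0 = 0$ only kills weight zero on the abelianization, and weight zero can reappear inside $[\mfu,\mfu]$ (e.g.\ a Heisenberg $\mfu$ with $A$ acting with weights $1, -1, 0$ on $X, Y, Z = [X,Y]$). For such elements, ``conjugate by $a$ with $\alpha(a)<0$'' does nothing; one has to express them as iterated commutators of elements carrying opposite nonzero weights and shrink those factors. That inductive argument through the lower central series is the actual content of \cite[Proposition 6.B.2]{ct}, and it is not reconstructed in your sketch. The lower-bound counting argument is fine in spirit (and is the easy direction); just note that converting between $\Ad$-operator-norm growth and $\cS_U$-word-length growth in a unipotent group introduces a polynomial, not merely constant, factor, which is harmless after adjusting $C$ and $\lambda$.
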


\begin{proof}
This follows, with some effort, from \cite[Proposition 6.B.2]{ct}.
\end{proof}

\subsection{The multiamalgam.}
In this subsection, we will define the multiamalgam $\hat{G}$ of a standard solvable group $G=U\rtimes A$ (first introduced by Abels \cite{ab}), and quote a key theorem of Cornulier and Tessera, which states that certain conditions under which $G\cong\hat{G}$---this means that $G$ is put together from its standard tame subgroups in a nice way, which will eventually let us build fillings in $G$ from fillings in standard tame subgroups. In order to state this theorem in the proper generality, we must briefly discuss the theory of unipotent groups.

\paragraph{Unipotent groups.} For a commutative $\RR$-algebra $\cP$, and a real unipotent group $U$ (i.e., a closed group of upper triangular real matrices with diagonal entries equal to $1$), the theory of algebraic groups allows us to define a group $U(\cP)$ \cite[\S 1.4]{borel}. In particular, if $U\subset \GL(n;\RR)$ consists of all upper triangular matrices with diagonal entries equal to $1$, then $U(\cP)$ consists of all upper triangular $n\times n$ matrices over $\cP$ with diagonal entries equal to $1$---such matrices are certainly invertible, having determinant equal to $1$. Suppose $\cP=\RR^Y$, so that $\cP$ consists of all functions $f:Y\To\RR$. Then there is an obvious bijection $U(\cP)\leftrightarrow U^Y$, and for $y\in Y$ and $\tilde{u}\in U(\cP)$ we may speak of $\tilde{u}(y)\in U$.

\begin{definition}(See \cite[\S 10.B]{ct}).
Let $G=U\rtimes A$ be real standard solvable. The multiamalgam $\hat{G}$ of the standard tame subgroups $G_C$ is defined by
$$\hat{G}=\Asterisk_{C\in\cC} G_C/\la\la R_G\ra \ra$$
where $R_G=\{i_C(u)^{-1}i_{C'}(u):u\in G_C\cap G_{C'}\}$ and $i_C$ denotes the inclusion of $G_C$ in the direct product.

Similarly, the multiamalgam $\hat{U}$ is defined by
$$\hat{U}=\Asterisk_{C\in\cC} U_C/\la\la R_U\ra \ra$$
where $R_U=\{i_C(u)^{-1}i_{C'}(u):u\in U_C\cap U_{C'}\}$ and $i_C$ denotes the inclusion of $U_C$ in the direct product.

For any commutative $\RR$-algebra $\cP$, we define $\hat{U}(\cP)$ and $\hat{G}(\cP)$ similarly, where $G_C(\cP)$ is understood to be $U_C(\cP)\rtimes A$.
\end{definition}

Of course, $\hat{U}\rtimes A\cong \hat{G}$. Recall that $G$ admits the SOL obstruction if it surjects onto a group of SOL type. Cornulier and Tessera give conditions under which $\hat{U}$ is isomorphic to $U$.

\begin{theorem}
\label{theorem:multiamalgam}
Let $G=U\rtimes A$ be a standard solvable real Lie group.  If $H_2(\mfu)_0=0$, $\Kill_2(\mfu)_0=0$, and $G$ does not admit the SOL obstruction then $\hat{U}(\cP)\cong U(\cP)$ for all commutative $\RR$-algebras $\cP$.
\end{theorem}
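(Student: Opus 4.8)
The statement in question — Theorem \ref{theorem:multiamalgam} — is quoted directly from Cornulier--Tessera \cite{ct}, so a genuine proof would simply cite that paper. Nonetheless, here is how I would reconstruct the argument, in the spirit of Abels's original treatment of the multiamalgam \cite{ab} and the algebraic-groups reformulation in \cite[\S 10.B]{ct}.

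\paragraph{Outline of the approach.} The natural map $\hat U(\cP)\To U(\cP)$ is always surjective, since the $U_C$ generate $U$ (their Lie algebras span $\mfu$ because $\mfu=\bigoplus_{\alpha\in\cW}\mfua$ and every weight $\alpha\neq 0$ lies in some conic subset $C\in\cC$), and this passes to the $\cP$-points functorially. The whole content is injectivity, equivalently that every relation in $U(\cP)$ is a consequence of the amalgamation relations $R_U$ together with the internal relations of the $U_C(\cP)$. The plan is to prove this first at the level of Lie algebras, i.e. for the ``multiamalgam of Lie algebras'' $\hat\mfu=\bigoplus_{C}\mfu_C$ modulo the relations identifying $\mfu_C\cap\mfu_{C'}$, and then bootstrap from Lie algebras to unipotent groups using the exponential map (a global diffeomorphism here, since $U$ is simply connected nilpotent) and then from the group $\hat U$ to the functor-of-points $\hat U(\cP)$ by a base-change argument, since $U$, being unipotent, is determined by its Lie algebra over any $\RR$-algebra via the Baker--Campbell--Hausdorff formula (a polynomial formula, since $\mfu$ is nilpotent, hence valid over any commutative $\cP$).

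\paragraph{The homological heart.} The Lie-algebra-level statement is where $H_2(\mfu)_0$ and $\Kill(\mfu)_0$ enter. One presents the ``multiamalgamated Lie algebra'' $\tilde\mfu$ as the free Lie algebra (or the quotient making the bracket $A$-equivariant in the right way) on $\bigoplus_\alpha\mfua$ modulo the obvious bracket relations visible inside each single conic subgroup, and measures the failure of $\tilde\mfu\To\mfu$ to be an isomorphism. The kernel is a central ideal, and by a standard five-term-exact-sequence / Hopf-formula computation this kernel is governed by the part of $H_2(\mfu)$ supported on weights that are \emph{not} expressible using a single conic subset — precisely the phenomenon detected by $H_2(\mfu)_0$ (and the Sol obstruction), while $\Kill(\mfu)_0$ controls the subtler failure where a bracket relation $[x,y]\odot z = y\odot[x,z]$ must hold to glue the tame pieces consistently. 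The hypothesis that these three obstructions vanish is exactly what forces $\tilde\mfu\cong\mfu$. I would organize this as: (i) identify $\ker(\hat\mfu\To\mfu)$ with an explicit subquotient built from $\bigwedge^2$ and $\odot^2$ of $\mfu$ graded by $\cW$; (ii) show each graded piece of weight $0$ (under all $a\in A$) is the obstruction, via the definitions of $H_2$ and $\Kill$; (iii) conclude vanishing.

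\paragraph{Where the difficulty lies.} The routine parts are the surjectivity and the two base-change steps (Lie algebra $\leadsto$ group via BCH, group $\leadsto$ $\cP$-points via functoriality of the unipotent group scheme); these are formal once one is careful that everything is defined by the \emph{same} polynomial identities over every $\cP$. The genuinely hard step is step (i)--(ii): pinning down exactly which homological/symmetric-square data obstructs the gluing, and checking that ``weight zero for all $a\in A$'' in the sense of the $V_0$ construction matches the combinatorics of ``not supported on a single conic subset.'' This requires a careful bookkeeping of weights $\alpha+\beta$ when $\alpha,\beta$ range over $\cW$, using that $[\mfua,\mfub]\subset\mfu_{\alpha+\beta}$, and an understanding of when a cone in $\Hom(A,\RR)$ containing $\alpha$ and $\beta$ but not $0$ exists — which fails precisely in the Sol-type situation. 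Since \cite{ct} carries this out in full, for the purposes of this paper I would simply invoke \cite[\S 10.B]{ct} and move on.

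\smallskip
\noindent(For the record, in the paper this theorem is used as a black box: it is stated here only to have the isomorphism $\hat U(\cP)\cong U(\cP)$ available in \S\ref{section:main}, and no independent proof is given.)
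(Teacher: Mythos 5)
Your proposal agrees with the paper in the only respect that matters here: the paper does not prove this theorem independently but simply cites Cornulier--Tessera, just as you suggest doing. Specifically, the paper's proof reads in its entirety: ``This follows from Corollary 9.D.4 of \cite{ct}. The 2-tameness hypothesis of the corollary is satisfied because of Proposition 4.C.3 of \cite{ct}.'' So your instinct to treat this as a black box to be cited from \cite{ct} (rather than re-derived) is exactly the paper's approach, and your closing parenthetical is correct.

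Your reconstruction sketch is a reasonable high-level caricature of what goes on in \cite{ct}, but a few of the attributions are imprecise. The surjectivity/BCH/functor-of-points plumbing is accurate. However, in Cornulier--Tessera's actual division of labor, the standard-solvable condition $(U/[U,U])_0=0$ controls the weight-$0$ part of the abelianization; the absence of the Sol obstruction is a condition on \emph{pairs} of weights whose span contains $0$ (it is not a substitute for or equivalent to $H_2(\mfu)_0=0$); and $H_2(\mfu)_0=0$ together with $\Kill(\mfu)_0=0$ and the non-Sol condition jointly give what they call ``2-tameness,'' which is what their Corollary 9.D.4 requires. Your phrasing ``detected by $H_2(\mfu)_0$ (and the Sol obstruction)'' conflates two independent hypotheses, and ``these three obstructions'' elides the standard-solvable assumption, which is also in force. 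None of this affects the soundness of the proposal, since you correctly defer the real work to \cite{ct}; it would only matter if you attempted to carry out steps (i)--(ii) in detail, where the bookkeeping of which hypothesis kills which graded piece of the kernel is precisely the delicate part.
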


\begin{proof}
This follows from Corollary 9.D.4 of \cite{ct}. The 2-tameness hypotheses of the corollary is satisfied because of Proposition 4.C.3 of \cite{ct}.
\end{proof}

\section{Proof of the main theorem.}
\label{section:main}
The rest of this paper is devoted to the proof of the following theorem.

\begin{theorem}
\label{theorem:MAIN}
Let $G=U\rtimes A$ where $U$ and $A$ are contractible real Lie groups, $A$ is abelian, and $U$ is a real unipotent group (i.e., a closed group of strictly upper triangular real matrices.)

If $G$ is standard solvable and does not surject onto a group of Sol type, and $H_2(\mfu)_0$ and $\Kill(\mfu)_0$ are trivial, then $G$ is Lipschitz $1$-connected.
\end{theorem}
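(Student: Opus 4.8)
The strategy is to use Lemma \ref{lemma:gromov}: it suffices to exhibit a normal form $\omega:G\To\cS^*$ for some compact generating set $\cS$ and show that $\Delta\leadsto\varepsilon$ for $\omega$-triangles $\Delta$. The key structural input is Theorem \ref{theorem:multiamalgam}, which under the stated hypotheses gives $\hat U(\cP)\cong U(\cP)$ for all commutative $\RR$-algebras $\cP$; in particular $\hat U\cong U$, so $G\cong\hat G=\Asterisk_{C\in\cC}G_C/\la\la R_G\ra\ra$. The plan is to build $\omega$ by first writing an arbitrary $g\in G$ as a bounded-length product of elements of the standard tame subgroups $G_C$ (using $G\cong\hat G$ and a distortion estimate to control length — here Proposition \ref{proposition:shortcuts} is essential, since elements of $U$ must be written efficiently in the generators of $G$, not of $U$), then applying the tame normal forms from Proposition \ref{proposition:tamelip1c} inside each $G_C$. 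An $\omega$-triangle then decomposes into pieces, each of which is either (a) an $\omega_{G_C}$-triangle inside a single tame subgroup $G_C$ — fillable by the argument already carried out in the proof of Proposition \ref{proposition:tamelip1c}, via Lemma \ref{lemma:gromov} applied to $G_C$ — or (b) a word that is \emph{freely trivial} in $H=\Asterisk_{C\in\cC}G_C$, handled by Lemma \ref{lemma:freehomotopy}, or (c) a consequence of the amalgamation relations $R_G$, i.e.\ an identity $i_C(u)=i_{C'}(u)$ for $u\in G_C\cap G_{C'}$.

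The crux is therefore step (c): filling the amalgamation relations with a linear-Lipschitz disk. This is where the algebraic-group strength of Theorem \ref{theorem:multiamalgam} — the isomorphism over \emph{all} $\cP$, not just $\cP=\RR$ — is meant to be used. The idea is that $\hat U(\cP)\cong U(\cP)$ encodes not just that the amalgamation relations hold, but that they hold \emph{uniformly in families}: taking $\cP=\RR^Y$ for a suitable parameter space $Y$ (e.g.\ a path or a square worth of group elements), one gets a single element $\tilde u\in\hat U(\cP)$ whose value $\tilde u(y)$ interpolates between the two sides of the relation, and the isomorphism $\hat U(\cP)\cong U(\cP)$ lets one transport this family into $U$. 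Concretely, I expect that a bounded-length relation $i_C(u)=i_{C'}(u)$ can be realized by a path in $\hat U$ (equivalently in $U$, via the $\cP=\RR^{[0,1]}$ trick) that stays in a bounded neighborhood of the identity and passes through bounded-length words in $\cS$, so that the ``bounded neighborhood'' clause of Lemma \ref{lemma:big} (item five, the set $\cD$) or the reparameterization machinery of Lemma \ref{lemma:reparameterization} produces the required $O(\ell)$-Lipschitz filling. Making this precise — choosing the algebra $\cP$, extracting the interpolating family, and bounding its length and Lipschitz constant in $\cS$ — is the technical heart of the argument and the step I expect to occupy most of the remaining work.

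Assembling the pieces: given a relation $w=_G1$ with $w\in\cS^*$, use Lemma \ref{lemma:gromov} to reduce to $\omega$-triangles; write each $\omega$-triangle, via the Tietze-transformation combinatorics of the isomorphism $G\cong\hat G/\la\la R_G\ra\ra$, as a bounded product of (i) relators of the individual $G_C$, (ii) a freely-trivial word in $H$, and (iii) conjugates of elements of $R_G$; fill (i) using Proposition \ref{proposition:tamelip1c} (the tame case is already done), fill (ii) using Lemma \ref{lemma:freehomotopy}, and fill (iii) using the family/parametrized argument of the previous paragraph together with Lemma \ref{lemma:big}. Throughout, the ``$\leadsto$'' calculus of Lemma \ref{lemma:big} lets us concatenate and compose these partial homotopies while keeping all Lipschitz constants linear in $\ell(w)$, and Proposition \ref{proposition:shortcuts} guarantees that translating between $U$-length and $G$-length costs only a constant factor after the logarithm is taken. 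The main obstacle, to reiterate, is step (iii): everything else is either a direct appeal to results already established in \S\ref{section:preliminaries}--\S\ref{section:multiamalgam} or a routine bookkeeping of the kind carried out in the proof of Proposition \ref{proposition:tamelip1c}.
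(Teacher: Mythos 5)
Your overall skeleton matches the paper's — reduce to $\omega$-triangles via Lemma~\ref{lemma:gromov}, build $\omega$ by writing $g$ as a bounded product of elements of the standard tame subgroups, move the $A$-factors out of the way, handle the freely-trivial part with Lemma~\ref{lemma:freehomotopy}, and deal with the amalgamation relators. But you have misplaced the role of the Cornulier--Tessera parametrized-algebra trick, and the misplacement hides the step that actually carries the weight. Filling an individual amalgamation relator $r=i_{C'}(u)^{-1}i_{C''}(u)$ requires no parametrized algebra at all: $C'\cap C''$ is itself a conic subset and $u\in U_{C'\cap C''}$, so both minimal representatives $\olu'\in(\cS_A\cup\cS_{C'})^*$ and $\olu''\in(\cS_A\cup\cS_{C''})^*$ can be homotoped — entirely inside the tame subgroups $G_{C'}$, $G_{C''}$, using Proposition~\ref{proposition:tamelip1c} — to a common representative $\overline{u}\in(\cS_A\cup\cS_{C'\cap C''})^*$, after which $\overline{u}^{-1}\overline{u}\leadsto\varepsilon$. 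That is a few lines, not ``the technical heart.''

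The genuine content is precisely what you dismissed as ``Tietze-transformation combinatorics'': given $x_1\cdots x_K=_U 1$ with $x_i\in U_{C_i}$ (a \emph{fixed} sequence of conic subsets), you must factor $x_1\cdots x_K$ in the free product $H_U$ as $(g_1 r_1 g_1^{-1})\cdots(g_N r_N g_N^{-1})$ with $N$ and the syllable length of each $g_j$ \emph{bounded by constants independent of the $x_i$}, and with every letter length $|g_{jk}|$, $|u_j|$ polynomially bounded in $\sum_i|x_i|_{\cS_{C_i}}$ (Lemma~\ref{lemma:factoring}). The isomorphism $\hat U\cong U$ (the $\cP=\RR$ instance of Theorem~\ref{theorem:multiamalgam}) guarantees that \emph{some} such factorization exists for each individual tuple, but it gives no uniformity: a priori $N$ and $|g_j|$ could grow without bound. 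This is exactly where the ``all commutative $\RR$-algebras $\cP$'' strength of Theorem~\ref{theorem:multiamalgam} is spent. Taking $\cP=\cP_Y$ — polynomially-growing functions on $Y\times[1,\infty)$, $Y$ of continuum cardinality — one packages \emph{all} tuples $(x_1,\dots,x_K)$ with trivial product into a single element of $U_{C_1}(\cP_Y)\times\cdots\times U_{C_K}(\cP_Y)$, applies $\hat U(\cP_Y)\cong U(\cP_Y)$ \emph{once} to factor that single element in $H_U[\cP_Y]$, and then evaluates pointwise; the polynomial bounds are automatic from membership in $\cP_Y$. Your proposal instead takes $\cP=\RR^Y$ with $Y$ ``a path or square'' and tries to use it to interpolate a single relation, which is neither what is needed nor what is obtained. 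Without the uniform factorization, your assertion that each $\omega$-triangle decomposes into ``a bounded product'' of the three types of piece has no justification, and the Lipschitz constant of the assembled filling is uncontrolled. For the same uniformity reason, the $\cP_Y$ device is also used to \emph{define} $\omega$ in the first place: to write an arbitrary $u\in U$ as $x_1\cdots x_k$ with a fixed $k$ and $|x_i|_{\cS_{C_i}}$ polynomial in $|u|_{\cS_U}$ (Lemma~\ref{lemma:omega}), which is then combined with Proposition~\ref{proposition:shortcuts} to get $\ell(\omega(u))=O(|u|_\cS)$.
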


\begin{proof}
Lemma \ref{lemma:omega} will show that there exists a generating set $\cS$ for $G$ and normal form $\omega:G\To\cS^*$ with certain properties.

Lemma \ref{lemma:omegatri} will show that if $\omega$ has these properties, then $\Delta\leadsto\varepsilon$ for $\omega$-triangles $\Delta$. By Lemma \ref{lemma:gromov}, this will suffice to prove the theorem.
\end{proof}

\subsection{Defining $\omega$.}
\label{subsection:normalform}
\paragraph{Standing assumptions.} Throughout the rest of this paper, we assume that $G$ satisfies the hypotheses of the theorem. That is, $G=U\rtimes A$ is a standard solvable group such that $H_2(\mfu)_0=0$, $\Kill_2(\mfu)_0=0$, and $G$ does not surject onto a group of Sol type.

\paragraph{Notation.} Let $H=\Asterisk_{C\in\cC}G_C$ and $H_U[\cP]=\Asterisk_{C\in\cC}U_C(\cP)$ for any commutative $\RR$ algebra $\cP$, and let $i_C:U_C(\cP)\To H_U[\cP]$ denote inclusion. We will write $H_U$ for $H_U[\RR]$. Let $\cS_A$ be a compact generating set for $A$. For $C\in\cC$, let $\cS_C$ be a compact generating set for $U_C$. Let $\cS_U=\bigcup_{C\in\cC}\cS_C$---by Theorem \ref{theorem:multiamalgam} this is a compact generating set for $U$. Let $\cS=\cS_A\cup\cS_U$, this is a compact generating set for $G$. Let $\cS_H$ be a generating set for $H$ which is equal to the union of compact generating sets for $G_C$ as $C$ ranges over $\cC$, and let $\cS_{H_U}\subset H_U$ be the union of the $\cS_C$---this is a generating set for $H_U$. Given $C\in\cC$ and $x\in U_C$, let $\olx\in(\cS_A\cup\cS_C)^*$ be a minimal length word representing $x$. Let $\phi_A:G\To A$ be projection. Let the set theoretic map $\phi_U:G\To U$ be defined by $\phi_U(g)=\phi_A(g)^{-1}g$, so that $g=\phi_A(g)\phi_U(g)$.

\begin{lemma}
\label{lemma:omega}
Under our standing assumptions, there exists a finite sequence $C_1\ldots C_k$ of conic subsets and a normal form $\omega:G\To\cS^*$ such that $\omega$ has the following properties.
\begin{itemize}
\item For any $g\in G$, $\omega(g)=\omega(\phi_A(g))\omega(\phi_U(g))$.
\item For $a\in A$, $\omega(a)\in\cS_A^*$ is a minimal length word representing $a$.
\item For $u\in U$, $\omega(u)$ has the form $\overline{x_1}\ldots\overline{x_k}$, where $x_i\in U_{C_i}$. 
\end{itemize}
\end{lemma}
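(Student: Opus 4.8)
The plan is to build $\omega$ from a single ``universal'' factorization of elements of $U$ as bounded-length products of elements of the standard tame subgroups, obtained by applying Theorem \ref{theorem:multiamalgam} to the coordinate ring of $U$, and then to verify the normal-form inequality $\ell(\omega(g))=O(|g|_\cS)$ by tracking sizes through this (polynomial) factorization using the distortion estimate of Proposition \ref{proposition:shortcuts}. \emph{Step 1 (the universal factorization).} Let $\cP=\RR[U]$ be the ring of regular functions on $U$; since $U$ is unipotent it is isomorphic as a variety to an affine space, so $\cP$ is a polynomial ring. The identity morphism $U\To U$ is a canonical element $\tilu_0\in U(\cP)$. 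By Theorem \ref{theorem:multiamalgam} the natural map $\hat{U}(\cP)\To U(\cP)$ is an isomorphism, so $\tilu_0$ lifts to some $\hat{u}_0\in\hat{U}(\cP)$; as $\hat{U}(\cP)$ is a quotient of the free product $\Asterisk_{C\in\cC}U_C(\cP)$, we may write $\hat{u}_0$ as the image of a finite word $i_{C_1}(\tilx_1)\cdots i_{C_k}(\tilx_k)$ with $C_1,\ldots,C_k\in\cC$ and $\tilx_j\in U_{C_j}(\cP)$. This fixes the sequence $C_1,\ldots,C_k$ once and for all. Each $\tilx_j$, being a matrix with entries in $\cP$, defines a morphism of affine varieties $U\To U_{C_j}$, $u\mapsto x_j(u):=\tilx_j(u)$, of some fixed degree; evaluating at a point $u\in U$ the identity that expresses $\tilu_0$ in $U(\cP)$ as the product of the images of the $\tilx_j$, we obtain the factorization $u=x_1(u)\cdots x_k(u)$ valid for \emph{every} $u\in U$.

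\emph{Step 2 (definition of $\omega$ and the three properties).} Fix for each $a\in A$ a minimal-length word $\omega_A(a)\in\cS_A^*$ representing $a$, and recall the notation $\olx\in(\cS_A\cup\cS_C)^*$ for a chosen minimal-length word representing $x\in U_C$. For $u\in U\setminus\{1_U\}$ put $\omega_U(u)=\overline{x_1(u)}\cdots\overline{x_k(u)}$, set $\omega_U(1_U)=\varepsilon$, and define $\omega(g)=\omega_A(\phi_A(g))\,\omega_U(\phi_U(g))$. The first bulleted property is the definition; for $a\in A$ we have $\phi_U(a)=1_U$, so $\omega(a)=\omega_A(a)$, giving the second; for $u\in U$ we have $\phi_A(u)=1_A$ and $\omega_A(1_A)=\varepsilon$, so $\omega(u)=\omega_U(u)$ has the required form (the identity being the trivial case, with all $x_i=1$).

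\emph{Step 3 (normal form).} It remains to check $\ell(\omega(g))=O(|g|_\cS)$. Since $\phi_A:G\To A$ is a homomorphism carrying $\cS$ into $\cS_A$, $\ell(\omega_A(\phi_A(g)))=|\phi_A(g)|_{\cS_A}\leq|g|_\cS$, and $|\phi_U(g)|_\cS\leq 2|g|_\cS$, so it suffices to bound $\ell(\omega_U(u))$ for $u\in U$. Write $N=|u|_{\cS_U}$; by Proposition \ref{proposition:shortcuts} applied to $G$ we have $|u|_\cS\asymp\log(1+N)$. A standard fact about simply connected nilpotent Lie groups gives $\|u\|\leq(1+N)^{O(1)}$ (comparing word length with the size of the matrix entries), hence $\|x_j(u)\|\leq(1+N)^{O(1)}$ because $x_j$ is a polynomial morphism of fixed degree, and then $|x_j(u)|_{\cS_{C_j}}\leq(1+N)^{O(1)}$ by the same comparison in $U_{C_j}$. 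Finally $G_{C_j}=U_{C_j}\rtimes A$ is standard solvable (the weights of $U_{C_j}$ lie in the cone $C_j\not\ni 0$, hence no weight is $0$), so Proposition \ref{proposition:shortcuts} applied to $G_{C_j}$ yields $\ell(\overline{x_j(u)})=|x_j(u)|_{\cS_A\cup\cS_{C_j}}=O(\log(1+|x_j(u)|_{\cS_{C_j}}))=O(\log(1+N))=O(|u|_\cS)$. Summing over the fixed number $k$ of factors, $\ell(\omega_U(u))=O(|u|_\cS)$, whence $\ell(\omega(g))=O(|g|_\cS)$ and $\omega$ is a normal form.

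The main obstacle is the size estimate in Step 3, and hence the need in Step 1 to extract from Theorem \ref{theorem:multiamalgam} a factorization map that is polynomial of bounded degree; this is exactly why we base change to the coordinate ring $\RR[U]$ rather than to, say, the ring $\RR^U$ of all functions. Only polynomiality lets the size of $u$ control the sizes of its factors $x_j(u)$, so that the exponential distortion of unipotent subgroups (Proposition \ref{proposition:shortcuts}) can be applied twice---once to pass from $|u|_\cS$ to the ``algebraic size'' of $u$, and once to pass back from the algebraic sizes of the $x_j(u)$ to $\ell(\overline{x_j(u)})=|x_j(u)|_{\cS_A\cup\cS_{C_j}}$. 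Everything else is bookkeeping with the definitions.
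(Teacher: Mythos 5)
Your proof is correct, and it reaches the lemma by a genuinely different specialization of Theorem \ref{theorem:multiamalgam} than the paper. The paper's proof (the ``Cornulier--Tessera trick'') base changes to the large algebra $\cP_Y$ of functions on $Y\times[1,\infty)$ of polynomial growth, chooses a single ``universal family'' element $\tilg\in U(\cP_Y)$ hitting every $g\in U$ at some parameter $t=O(|g|_{\cS_U})$, factors $\tilg$ through the $U_{C_i}(\cP_Y)$, and then evaluates; the polynomial size control on the factors is built into the definition of $\cP_Y$. You instead base change to the coordinate ring $\RR[U]$ and factor the tautological element (the identity morphism), so the factorization becomes a fixed tuple of regular maps $u\mapsto x_j(u)$, and the polynomial control comes from regularity of these maps rather than from a growth condition imposed on the algebra. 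Both routes then finish identically, converting polynomial bounds into logarithmic word-length bounds via Proposition \ref{proposition:shortcuts} applied to $G$ and to the standard tame subgroups $G_{C_j}$ (your remark that each $G_{C_j}$ is standard solvable because $0\notin C_j$ is exactly the needed justification). What your version buys is canonicity and transparency: no choice of a huge set $Y$ or of a universal element, and the dependence of the factors on $u$ is visibly polynomial. What it requires, and what you correctly flag, is the standard two-sided polynomial comparability between word length and matrix-entry size in simply connected unipotent groups (e.g.\ via Malcev coordinates of the second kind); note that the paper's argument quietly uses the same fact in both directions, once to place $\tilg$ in $U(\cP_Y)$ and once to deduce $|\tilx_i(y,t)|_{\cS_{C_i}}\leq(1+t)^{\alpha}$, so this is a shared ingredient rather than a gap. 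Your Step 3 bookkeeping (splitting off $\phi_A(g)$, bounding $|\phi_U(g)|_\cS\leq 2|g|_\cS$, and handling $u=1_U$ so that $\omega(1_U)=\varepsilon$ keeps the first bullet consistent) is also a slightly cleaner route to $\ell(\omega(g))=O(|g|_\cS)$ than the paper's.
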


\begin{proof}
This follows from Proposition 6.B.2 of \cite{ct}, but we will now give a different proof in order to introduce a trick that will be used later.

\paragraph{The Cornulier-Tessera trick.} For a set $Y$, define the commutative $\RR$-algebra $\cP_Y$ as the collection of all functions $f:Y\times[1,\infty)\To\RR$ such that there is some $\beta\in\NN$ with $|f(y,t)|<(1+t)^{\beta}$. Note that an element of $U(\cP_Y)$ may be identified with a function from $Y\times[1,\infty)$ to $U$. Alternatively, one may think of an element of $U(\cP_Y)$ as a family of functions $[1,\infty)\To U$, indexed by $Y$ with matrix coefficients uniformly bounded by some polynomial $(1+t)^\beta$.

Choose $Y$ to have at least continuum cardinality, and let $\tilde{g}\in U(\cP_Y)$ be such that for every $g\in U$, there is some $y\in Y$ and $t=O(|g|_{\cS_U})$ such that $\tilde{g}(y,t)=g$. It is certainly possible to do this---for instance, one might take $Y=U$ and set $\tilde{g}(y,t)$ to be $1_U$ for $t<|y|_{\cS_U}$ and $y$ for $t\geq|y|_{\cS_U}$. In claiming that $\tilg\in U(\cP_Y)$, we have used the fact that the matrix coefficients of $g\in U$ are at most polynomial in $|g|_{\cS_U}$.

Since, by Theorem \ref{theorem:multiamalgam}, $U(\cP_Y)$ is generated by the union of the $U_C(\cP_Y)$, we may write $\tilg=\tilx_1\ldots \tilx_k$ where each $\tilx_i$ is an element of some $U_{C_i}(\cP_Y)$. Observe that there exists some $\alpha\in\NN$ such that $|\tilx_i(y,t)|_{\cS_{C_i}}\leq(1+t)^{\alpha}$ for all $i=1,\ldots,k$, $y\in Y$, and $t\geq 1$.

Now, let $g$ be an element of $U$. By definition of $\tilg$, there exist $y\in Y$ and $t=O(|g|_{\cS_U})$ such that $\tilg(y,t)=g$.  For $i=1,\ldots,k$, let $x_i=\tilx_i(y,t)$. We have that
$$g=\tilg(y,t)=\tilx_1(y,t)\ldots\tilx_k(y,t)=x_1\ldots x_k,$$
and $x_i\in U_{C_i}(\cP_Y)$ with $|x_i|_{\cS_{C_i}}=O(t^\alpha)=O(|g|_{\cS_U}^\alpha)$. Take $\omega(g)$ to be $\overline{x_1}\ldots\overline{x_k}$, and note that $|\omega(g)|_\cS=O(\log|g|_{\cS_U})$ because $|\overline{x_i}|_{\cS_A\cup\cS_{C_i}}=O(\log|x_i|_{\cS_{C_i}})=O(\log|g|_{\cS_U})$ by \ref{proposition:shortcuts}.

We have thus defined $\omega$ on elements of $U$, with the desired properties. Define $\omega$ on $A$ by taking $\omega(a)$ to be the shortest word in $\cS_A^*$ representing $a\in A$. Extend $\omega$ to all of $g$ by setting $\omega(g)=\omega(\phi_A(g))\omega(\phi_U(g))$. We must show that $\omega$ is a normal form---i.e., that, for $g\in G$, $\ell(\omega(g))=O(|g|_\cS)$.

We have $\ell(\omega(\phi_A(g)))=|\phi_A(g)|_\cS=O(|g|_\cS)$, so it suffices to show that $\ell(\omega(\phi_U(g)))=O(|g|_\cS)$. If $w\in\cS^*$ is a minimal length word representing some $g\in G$, note that $\omega(\phi_A(g))^{-1}w$ represents $\phi_U(g)$. By \ref{proposition:shortcuts}, there is some constant $C>1$ not depending on $g$ such that
$$|\omega(\phi_A(g))^{-1}w|_\cS\geq\frac{1}{C}\log|\phi_U(g)|_{\cS_U}.$$
Thus, since $|\omega(\phi_U(g))|_\cS=O(\log|\phi_U(g)|_{\cS_U})$, we have that
$$\omega(\phi_U(g))=O(|\phi_A(g)|_\cS+|w|_\cS)=O(|g|_\cS)$$
as desired.
\end{proof}

\subsection{Filling $\omega$-triangles.}
\label{subsection:normalformtriangles}
We wish to show that we may fill $\omega$ triangles, where $\omega$ is a normal form produced by Lemma \ref{lemma:omega}. Proposition \ref{proposition:conjugation} will allow us to homotope $\omega$-triangles into relations of the form $\overline{x_i}\ldots\overline{x_K}$ where each $\overline{x_i}$ is a word in $\cS_A\cup\cS_{C_i}$ efficiently representing an element $x_i$ of $U_{C_i}$, where $C_1,\ldots,C_K$ is some fixed sequence of conic subsets. In order to fill such relations, recall from Theorem \ref{theorem:multiamalgam} that under our standing assumptions, $U(\cP)\cong \hU(\cP)$ for any commutative $\RR$-algebra $\cP$. Consequently, the kernel of
$H_U[\cP] \To U(\cP)$
is normally generated by elements of the form
$$i_C(u)^{-1}i_{C'}(u)\quad\quad\quad\quad [u\in U_C(\cP)\cap U_{C'}(\cP).]$$
In order to fill $\overline{x_1}\ldots\overline{x_K}$ in Corollary \ref{corollary:factoring}, we will need to factor $x_1\ldots x_K$ in the free product $H_U$ as a product of a bounded number of elements of the form $g^{-1}i_C(u)^{-1}i_{C'}(u)g$, where each $g\in H_U$ is a product of a bounded number of elements living in some factors $U_C$, with $|g|_{\cS_U}$ and $|u|_{\cS_U}$ controlled by some polynomial of $\sum_{j=1}^K|x_j|_{\cS_{C_j}}$.

\begin{lemma}
\label{lemma:factoring}
(See \cite[Lemma 7.B.1]{ct}). Suppose that our standing assumptions are satisfied. Given a sequence of conical subsets $C_1,\ldots C_K$, there exist natural numbers $N,\mu,\beta$ such that for any sequence $x_i\in U_{C_i}$ with $x_1x_2\ldots x_K =_U 1_U$ there is an equality of the form
$$x_1\ldots x_K =_{H_U} (g_1 r_1 g_1^{-1})\ldots(g_N r_N g_N^{-1})$$
satisfying
\begin{itemize}
\item $g_j=_{H_U} g_{j1}\ldots g_{j\mu}$ where $g_{jk}\in U_{C_{jk}}$ for some $C_{jk}\in \cC$.
\item Each $r_j$ is of the form $i_{C'_j}(u_j)i_{C''_j}(u_j)^{-1}$ for some conical subsets $C'_j,C''_j$ and some $u_j\in U_{C'_j} \cap U_{C''_j}$.
\item $|g_{jk}|_{\cS_{C_{jk}}}=O(\ell^\beta),|u_j|_{\cS_{C'_j}}=O(\ell^\beta),$ and $|u_j|_{\cS_{C''_j}}=O(\ell^\beta)$ where $\ell=1+\sum_{i=1}^K |x_i|.$
\end{itemize}
\end{lemma}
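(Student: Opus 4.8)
The statement is quoted as Lemma 7.B.1 of \cite{ct}, so the plan is to deduce it from the structural results already available, principally Theorem \ref{theorem:multiamalgam}, together with a polynomial-control argument run through the Cornulier--Tessera trick introduced in the proof of Lemma \ref{lemma:omega}. The key point is to simultaneously (i) produce the factorization into conjugates of amalgamation relators $r_j = i_{C'_j}(u_j)i_{C''_j}(u_j)^{-1}$, with the number of factors $N$, the conjugator-length $\mu$, and the number of relators all bounded independently of the $x_i$, and (ii) show that the group elements appearing ($g_{jk}$, $u_j$) have word length polynomially bounded in $\ell = 1+\sum_i |x_i|$.

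\textbf{Step 1: the existence of a bounded-length factorization over a single algebra.} Fix the sequence $C_1,\dots,C_K$. By Theorem \ref{theorem:multiamalgam}, the natural map $H_U[\cP] \To U(\cP)$ is surjective with kernel normally generated by the relators $i_C(u)^{-1}i_{C'}(u)$ for $u \in U_C(\cP)\cap U_{C'}(\cP)$, for \emph{every} commutative $\RR$-algebra $\cP$. Apply this with $\cP = \cP_Y$ (the polynomially-bounded functions $Y\times[1,\infty)\To\RR$) for a suitably large index set $Y$, choosing $\tilx_i \in U_{C_i}(\cP_Y)$ so that every admissible tuple $(x_1,\dots,x_K)$ with $x_1\cdots x_K =_U 1$ is a specialization $x_i = \tilx_i(y,t)$ with $t = O(\ell)$. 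Since $\tilx_1\cdots\tilx_K$ lies in the kernel of $H_U[\cP_Y]\To U(\cP_Y)$, it is a product of conjugates of the relators, say
\[
\tilx_1\cdots\tilx_K =_{H_U[\cP_Y]} (\tilg_1 \tilr_1 \tilg_1^{-1})\cdots(\tilg_N \tilr_N \tilg_N^{-1}),
\]
where $N$, and the number $\mu$ of factors needed to write each $\tilg_j$ as a product of elements of the $U_C(\cP_Y)$, are finite numbers determined by this single identity over $\cP_Y$ --- hence independent of the choice of $(x_1,\dots,x_K)$. This is exactly where the ``one universal witness'' idea pays off: boundedness of $N$ and $\mu$ is automatic because we are looking at a \emph{single} element of a \emph{single} group.

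\textbf{Step 2: polynomial control under specialization.} Each $\tilg_{jk}$ and each $\tilu_j$ is a fixed element of some $U_C(\cP_Y)$, so its matrix entries are functions in $\cP_Y$, i.e.\ bounded by $(1+t)^{\beta_0}$ for some common $\beta_0$. Specializing at $(y,t)$ with $t = O(\ell)$ yields $g_{jk} = \tilg_{jk}(y,t)\in U_{C_{jk}}$ and $u_j = \tilu_j(y,t)$ with matrix entries $O(\ell^{\beta_0})$; by the relation between matrix-entry size and word length in a unipotent group (the polynomial bound already invoked in the proof of Lemma \ref{lemma:omega}), this gives $|g_{jk}|_{\cS_{C_{jk}}} = O(\ell^{\beta})$ and $|u_j|_{\cS_{C'_j}}, |u_j|_{\cS_{C''_j}} = O(\ell^\beta)$ for a suitable $\beta$. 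Specializing the identity of Step 1 at $(y,t)$ and using $x_i = \tilx_i(y,t)$ gives the asserted identity $x_1\cdots x_K =_{H_U}(g_1 r_1 g_1^{-1})\cdots(g_N r_N g_N^{-1})$ with all the required properties, since specialization $\cP_Y \To \RR$ is an algebra homomorphism and hence induces a group homomorphism $H_U[\cP_Y]\To H_U$ (and $U_C(\cP_Y)\To U_C$) respecting the $i_C$.

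\textbf{Main obstacle.} The genuinely delicate point is verifying that word length in each $U_{C}$ is controlled by a polynomial in the size of matrix entries, uniformly over the relevant family --- in other words, that unipotent groups have at most polynomially distorted matrix-entry-size versus word-length in \emph{both} directions, with exponent depending only on $n$ (the matrix size) and the sequence $C_1,\dots,C_K$, not on the individual element. For the $g_{jk}$ this is the standard fact that a unipotent group in $\GL(n,\RR)$ has polynomially bounded growth and polynomially bounded entries (used already implicitly in Lemma \ref{lemma:omega}); the subtlety is only that one must make sure the specialization $\cP_Y\To\RR$ does not blow up length, which it does not because it is literally evaluation at a point. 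Everything else --- surjectivity and the description of the kernel, the construction of the universal witness $\tilg$, and the extraction of $N,\mu$ --- is a direct citation of Theorem \ref{theorem:multiamalgam} and a repetition of the trick from Lemma \ref{lemma:omega}.
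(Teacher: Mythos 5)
Your proposal is correct and follows essentially the same route as the paper's proof: apply Theorem \ref{theorem:multiamalgam} over the polynomially-bounded function algebra $\cP_Y$, choose a universal witness $(\tilx_1,\dots,\tilx_K)$ with the strong surjectivity property, extract a single factorization into $N$ conjugates of amalgamation relators with conjugator-length $\mu$ and polynomial coefficient bounds, then specialize. The only small note is that both you and the paper implicitly rely on the witness being constructed so that $\tilx_1\cdots\tilx_K$ actually lies in $\ker\bigl(H_U[\cP_Y]\To U(\cP_Y)\bigr)$ (i.e.\ the product of the specializations is $1$ for every $(y,t)$, not just the witnessing ones), which is arranged in the construction but is worth stating explicitly.
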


\begin{proof}
This is a special case of \cite[Lemma 7.B.1]{ct}, but we will reprise most of the details here. We will use the same Cornulier and Tessera trick we used to prove Lemma \ref{lemma:omega}.
Take $\cP_Y$ as in the proof of Lemma \ref{lemma:omega}. Recall that $\tilx\in U_C(\cP_Y)$ may be thought of as a function from $Y\times[0,\infty)$ to $U_C$. Let $Y$ be a set with at least continuum cardinality, so that there exists $(\tilx_1,\ldots,\tilx_K)\in U_{C_1}(\cP_Y)\times
\ldots\times U_{C_K}(\cP_Y)$ which has the following strong surjectivity property: for any $(x_1\ldots x_K)\in U_{C_1}(\cP_Y)\times\ldots\times U_{C_K}(\cP_Y)$ with $x_1\ldots x_K=_U 1$ there exists $y\in Y$ and $t=O(|x_1|_{\cS_{C_1}}+\ldots+|x_K|_{\cS_{C_K}})$ with $\tilx_i(y,t)=x_i$.

By Theorem \ref{theorem:multiamalgam}, we know that there is some equality of the form
$$\tilx_1\ldots\tilx_K=_{H_U[\cP_Y]}
(\tilg_1^{-1}\tilr_1\tilg_1)\ldots(\tilg_N^{-1}\tilr_N\tilg_N)$$
where $\tilg\in\Asterisk U_C(\cP_Y)$ and each $\tilr_j$ has the form $i_{C'_j}(\tilu_j)^{-1}i_{C^{\prime\prime}_{j}}(\tilu_j)$ for some conical subsets $C'_j,C''_j$ and $\tilu_j\in U_{C'_j}(\cP_Y)\cap U_{C''_j}(\cP_Y)$. Since the $U_C(\cP_Y)$ generate $H_U[\cP_Y]$, there must be some $\mu$ such that for all $j=1,\ldots,N$,
$$\tilg_j=\tilg_{j1}\ldots\tilg_{j\mu},$$
where each $\tilg_{jk}$ lives in $U_{C_{jk}}$ for some $C_{jk}\in\cC$. Note that by definition of $\cP_Y$, there is some $\beta$ such that all $|\tilg_{jk}(y,t)|_{\cS_U}$ and $|\tilu_j(y,t)|_{\cS_{C'_j}}$ are $O(t^\beta)$.

Given, $x_1,\ldots,x_K\in U_{C_1}\times\ldots\times U_{C_K}$, let $\ell=\sum_{i=1}^K|x_i|_{\cS_{C_i}}$ and choose $y\in Y$ and $t=O(\ell)$ such that $\tilx_i(y,t)=x_i$ for $i=1,\ldots,K$. For $j=1,\ldots,N$ and $k=1,\ldots,\mu$, let $g_j=\tilg_j(y,t)$, $g_{jk}=\tilg_{jk}(y,t)$, $u_j=\tilu_j(y,t)$, and $r_j=i_{C'}(u_j)^{-1}i_{C''}(u_j)$. It follows that
$$x_1\ldots x_K =_{H_U} (g_1 r_1 g_1^{-1})\ldots(g_N r_N g_N^{-1}),$$
and the $g_j$ and $r_j$ satisfy the desired conditions.
\end{proof}

\begin{corollary}
\label{corollary:factoring}
Suppose that our standing assumptions are satisfied. Given a sequence of conical subsets $C_1,\ldots C_K$,  we have, in $G$, that
$$\overline{x_1}\ldots \overline{x_K} \leadsto \varepsilon.$$
for any sequence $x_i\in U_{C_i}$ with $x_1x_2\ldots x_K =_U 1_U$.
\end{corollary}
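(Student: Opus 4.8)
The plan is to use Lemma \ref{lemma:factoring} to rewrite the relation $\overline{x_1}\ldots\overline{x_K}$ in a form where every piece is something we already know how to fill. By Lemma \ref{lemma:factoring}, since $x_1x_2\ldots x_K =_U 1_U$ we have an equality in $H_U$ of the form $x_1\ldots x_K =_{H_U} (g_1 r_1 g_1^{-1})\ldots(g_N r_N g_N^{-1})$, where $N,\mu,\beta$ are fixed (depending only on the sequence $C_1,\ldots,C_K$), each $g_j$ factors as $g_{j1}\ldots g_{j\mu}$ with $g_{jk}\in U_{C_{jk}}$, each $r_j = i_{C'_j}(u_j)i_{C''_j}(u_j)^{-1}$ with $u_j\in U_{C'_j}\cap U_{C''_j}$, and all of $|g_{jk}|_{\cS_{C_{jk}}}$, $|u_j|_{\cS_{C'_j}}$, $|u_j|_{\cS_{C''_j}}$ are $O(\ell^\beta)$ where $\ell = 1+\sum_i |x_i|_{\cS_{C_i}}$. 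Taking logarithms and using Proposition \ref{proposition:shortcuts}, this says that the word obtained by spelling out the right-hand side in $\cS$ has length $O(\log\ell) = O(\ell(\overline{x_1}\ldots\overline{x_K}))$, since each $\overline{x_i}$ already has length $\Theta(\log(1+|x_i|_{\cS_{C_i}}))$.

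First I would use Lemma \ref{lemma:big} (the third, fourth, and sixth-style concatenation rules) together with Lemma \ref{lemma:freehomotopy} to replace $\overline{x_1}\ldots\overline{x_K}$ — which represents the identity in $G$ but perhaps not in $H_U$ — by the word $W$ obtained from the factorization above. More precisely, the word $\overline{x_1}\ldots\overline{x_K}\cdot W^{-1}$ represents the identity not merely in $G$ but in $H_U$ (this is exactly the content of the $H_U$-equality in Lemma \ref{lemma:factoring}), so by Lemma \ref{lemma:freehomotopy} it satisfies $\leadsto\varepsilon$; combined with $W W^{-1}\leadsto\varepsilon$ and the transitivity/concatenation rules of Lemma \ref{lemma:big}, we get $\overline{x_1}\ldots\overline{x_K}\leadsto W$. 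It then suffices to show $W\leadsto\varepsilon$.

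Next I would handle $W$ itself. Writing $W$ as a concatenation $V_1\ldots V_N$ where $V_j$ is a word for $g_j r_j g_j^{-1}$, and using that $vv^{-1}\leadsto\varepsilon$ plus the concatenation rules, it suffices to homotope each conjugate $g_j r_j g_j^{-1}$ to $\varepsilon$; since $\Lip$ is controlled and $N$ is fixed, this reduces to showing that a single word spelling out $g_j r_j g_j^{-1}$ satisfies $\leadsto\varepsilon$. Conjugating, this reduces (again via Lemma \ref{lemma:big}, inserting $g_j^{-1}g_j$) to showing that a word spelling out $r_j = i_{C'_j}(u_j)i_{C''_j}(u_j)^{-1}$ satisfies $\leadsto\varepsilon$. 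But $i_{C'_j}(u_j)$ and $i_{C''_j}(u_j)$ both represent the same element of $G$, namely $u_j\in U_{C'_j}\cap U_{C''_j}\subset U$ — indeed they even agree in $H_U$ up to the defining relations of the multiamalgam, and certainly their images agree in $G$ — so $r_j$ represents the identity in $G$; moreover $r_j$, viewed as a word in $\cS_{H_U}$, represents the identity in... well, it is a defining relator of $H_U\to U$, so it need not die in $H_U$, but its image in $G$ is trivial and its $\cS$-length is $O(\log\ell)$. Here the cleanest route is to fill $r_j$ directly inside the tame group $G_{C'_j}$ together with $G_{C''_j}$: since $u_j\in U_{C'_j}\cap U_{C''_j}$, the two words $\overline{u_j}$ computed in $\cS_A\cup\cS_{C'_j}$ and in $\cS_A\cup\cS_{C''_j}$ both represent $u_j$ in $G$, hence in each ambient tame group, and Proposition \ref{proposition:tamelip1c} (Lipschitz 1-connectedness of tame groups) gives a homotopy of the appropriate Lipschitz constant between them; concatenating gives $r_j\leadsto\varepsilon$. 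I expect the main obstacle to be the bookkeeping in this last step: carefully tracking that every word produced has $\cS$-length $O(\log\ell)$ — so that after applying Proposition \ref{proposition:shortcuts} the Lipschitz constants telescope correctly through the fixed number $N\mu$ of concatenations — and making sure that the $H_U$-equality of Lemma \ref{lemma:factoring} is invoked in exactly the place where Lemma \ref{lemma:freehomotopy} applies, rather than the weaker statement of equality in $G$.
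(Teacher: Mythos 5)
Your overall route is the same as the paper's: apply Lemma \ref{lemma:factoring}, use Proposition \ref{proposition:shortcuts} to see that the spelled-out right-hand side has length $O(\ell(\overline{x_1}\ldots\overline{x_K}))$, use Lemma \ref{lemma:freehomotopy} together with Lemma \ref{lemma:big} to pass from $\overline{x_1}\ldots\overline{x_K}$ to $(\overline{g_1}\,\overline{r_1}\,\overline{g_1}^{-1})\ldots(\overline{g_N}\,\overline{r_N}\,\overline{g_N}^{-1})$ (the relevant equality and Lemma \ref{lemma:freehomotopy} are invoked in $H=\Asterisk_{C\in\cC}G_C$ rather than $H_U$, since your words contain $\cS_A$-letters, but that is only a wording issue), and then cancel the conjugators, reducing everything to $\overline{r_j}\leadsto\varepsilon$. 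The gap is in that last step. You write the word for $r_j$ as $(\olu'_j)^{-1}\olu''_j$ with $\olu'_j\in(\cS_A\cup\cS_{C'_j})^*$ and $\olu''_j\in(\cS_A\cup\cS_{C''_j})^*$, and assert that Proposition \ref{proposition:tamelip1c} supplies a homotopy between the two words because both represent $u_j$ ``in each ambient tame group.'' But Proposition \ref{proposition:tamelip1c} fills loops lying in a single tame group, and the loop determined by $(\olu'_j)^{-1}\olu''_j$ lies in neither $G_{C'_j}$ nor $G_{C''_j}$: it uses generators from two different factors, and the subgroup generated by $U_{C'_j}\cup U_{C''_j}$ need not be tame (if it always were, the multiamalgam machinery would be superfluous). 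As you yourself observe, Lemma \ref{lemma:freehomotopy} is also unavailable here because $r_j$ is nontrivial in $H$. So ``concatenating gives $r_j\leadsto\varepsilon$'' is precisely the point that still lacks an argument.

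The missing idea, which is how the paper closes this step, is to route through the intersection: $C'_j\cap C''_j$ is itself a conic subset (it is nonempty unless $u_j=1$, in which case there is nothing to prove), and $U_{C'_j}\cap U_{C''_j}=U_{C'_j\cap C''_j}$, so one may choose a word $\overline{u_j}\in(\cS_A\cup\cS_{C'_j\cap C''_j})^*$ representing $u_j$ with $\ell(\overline{u_j})=O(\ell(\overline{r_j}))$, by Proposition \ref{proposition:shortcuts} together with the polynomial comparability of the various unipotent word metrics. Since $U_{C'_j\cap C''_j}\leq U_{C'_j}$, the homotopy $\olu'_j\leadsto\overline{u_j}$ takes place entirely inside the single tame group $G_{C'_j}$, where Proposition \ref{proposition:tamelip1c} applies; symmetrically, $\olu''_j\leadsto\overline{u_j}$ inside $G_{C''_j}$. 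Hence $\overline{r_j}=(\olu'_j)^{-1}\olu''_j\leadsto\overline{u_j}^{-1}\overline{u_j}\leadsto\varepsilon$, and with this insertion your argument coincides with the paper's proof.
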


\begin{proof}
By Lemma \ref{lemma:factoring}, we have (for $\beta,N,\mu$ independent of the $x_j$),
$$x_1\ldots x_K =_{H_U} (g_1 r_1 g_1^{-1})\ldots(g_N r_N g_N^{-1})$$
where $g_j=_{H_U}g_{j1}\ldots g_{j\mu}$ for $g_{jk}\in U_{C_{jk}}$, each $r_j$ is of the form $i_C'(u_j)^{-1}i_{C''}(u_j)$ for some conical subsets $C'_j,C''_j$ and some $u_j\in U_{C'_j} \cap U_{C''_j}$, and $|g_{jk}|_{\cS_{C_{jk}}},|u_j|_{\cS_{C'_j}}=O(\ell^\beta)$ where $\ell=1+\sum_{i=1}^K |x_i|.$

For each $j=1,\ldots,N$, let $\overline{g_j}\in\cS^*$ be $\overline{g_{j1}}\ldots\overline{g_{j\mu}}$. Let $\overline{r_j}\in\cS^*$ be equal to $(\olu'_{j})^{-1}\olu''_j$, where $\olu'_j\in \cS_A\cup\cS_{C'_j}$ is a minimal length word representing $u_j$ in $G_{C'_j}$ and $\olu''_j\in \cS_A\cup\cS_{C''_j}$ is a minimal length word representing $u_j$ in $G_{C''_j}$---this implies that $\overline{r_j}$ represents $r_j$ in $H$.

First we show that $\overline{r_j}\leadsto \varepsilon$. Note that $C'_j\cap C''_j$ is itself a conic subset, so there is some $\overline{u_j}\in (\cS_A\cup \cS_{C'_j\cap C''_j})^*$ which represents $u_j$ in $G_{C'_j\cap C''_j}$ with $\ell(\overline{u_j})=O(\ell(\overline{r_j}))$. Thus we have (by Proposition \ref{proposition:tamelip1c}),

$$\overline{r_j}=(\olu'_j)^{-1}\olu''_j\leadsto\overline{u_j}^{-1}\overline{u_j}\leadsto\varepsilon.$$

Next, observe that
$$\ell(\overline{g_j})=\sum_{k=1}^\mu\ell(\overline{g_{jk}})=\sum_{k=1}^\mu O(\log|g_{jk}|_{\cS_{C_{jk}}})$$
$$=O\left(\beta\log\left(1+\sum_{i=1}^K|x_i|\right)\right)=O(\ell(\overline{x_1}\ldots\overline{x_K})),$$
by Proposition \ref{proposition:shortcuts}, and $\ell(\overline{r_i})=O(\ell(\overline{x_1}\ldots\overline{x_K}))$ similarly.

Because
$$\overline{x_1}\ldots \overline{x_K} =_{H} (\overline{g_1} \overline{r_1} \overline{g_1}^{-1})\ldots(\overline{g_N} \overline{r_N} \overline{g_N}^{-1}),$$
and
$$\ell((\overline{g_1} \overline{r_1} \overline{g_1}^{-1})\ldots(\overline{g_N} \overline{r_N} \overline{g_N}^{-1}))=O(\ell(\overline{x_1}\ldots\overline{x_K})),$$
we have by Lemma \ref{lemma:big} and Lemma \ref{lemma:freehomotopy} that
$$\overline{x_1}\ldots \overline{x_K}\leadsto(\overline{g_1} \overline{r_1} \overline{g_1}^{-1})\ldots(\overline{g_N} \overline{r_N} \overline{g_N}^{-1})$$
$$\leadsto(\overline{g_1}\overline{g_1}^{-1})\ldots(\overline{g_N}\overline{g_N}^{-1})\leadsto\varepsilon$$
because $\overline{r_j}\leadsto\varepsilon$ as noted above.
\end{proof}

We need one more proposition before we can fill $\omega$-triangles. Given $a,x\in G$, let $^ax$ denote $axa^{-1}$.

\begin{proposition}
\label{proposition:conjugation}
Fix a sequence of conical subsets $C_1,\ldots,C_K\in\cC$. We have that
$$\omega(a)\overline{x_1}\ldots\overline{x_K}\leadsto\overline{^ax_1}\ldots\overline{^ax_2}\omega(a)$$
for all $a\in A$ and $x_i\in U_{C_i}$.
\end{proposition}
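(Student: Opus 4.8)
The goal is to push a generator $a \in A$ (represented by $\omega(a)$) from the left side of a product $\overline{x_1}\ldots\overline{x_K}$ to the right side, at the cost of replacing each $x_i$ by its conjugate ${}^a x_i = a x_i a^{-1} \in U_{C_i}$. The natural strategy is an induction on $K$: it suffices to prove the single commutation relation $\omega(a)\,\overline{x} \leadsto \overline{{}^a x}\,\omega(a)$ for $a \in A$ and $x \in U_C$ with $C \in \cC$ arbitrary, since then one applies the rule $K$ times (using the concatenation rule, the second item of Lemma~\ref{lemma:big}, to handle the remaining untouched letters $\overline{x_{i+1}}\ldots\overline{x_K}$ and the already-processed $\overline{{}^a x_1}\ldots\overline{{}^a x_{i-1}}$), composing the homotopies with the third item of Lemma~\ref{lemma:big}. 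The $O(\ell)$-Lipschitz bookkeeping works because $K$ is a fixed constant and, crucially, conjugation by the fixed element $a$ distorts word length in $U_C$ by at most a multiplicative constant, so $|{}^a x|_{\cS_C} = O(|x|_{\cS_C})$, whence $\ell(\overline{{}^a x}) = O(\ell(\overline{x}))$ by Proposition~\ref{proposition:shortcuts}. A subtlety: $a$ itself is a single element of $A$, but $\omega(a)$ may be a long word in $\cS_A^*$; since $\omega(a)$ is a \emph{minimal} length word, $\ell(\omega(a))$ is bounded by a constant (as $a$ ranges over the fixed generating set $\cS_A$, or more precisely we only need $a \in \cS_A$ for the induction, then iterate over the letters of a general $\omega(a) \in \cS_A^*$).

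For the base commutation relation $\omega(a)\,\overline{x} \leadsto \overline{{}^a x}\,\omega(a)$ with $x \in U_C$: here $\overline{x}$ is a minimal-length word in $(\cS_A \cup \cS_C)^*$ representing $x$, so it is a string $s_1 s_2 \cdots s_m$ with each $s_j \in \cS_A \cup \cS_C$. The idea is to commute $\omega(a)$ past each letter $s_j$ in turn. If $s_j \in \cS_A$, then since $A$ is abelian, $\omega(a) s_j =_G s_j \omega(a)$, and this is a relation of bounded length lying in a bounded neighborhood of the identity, hence homotopes with the fifth item of Lemma~\ref{lemma:big} (or rather, one uses that $\omega(a) s_j \omega(a)^{-1} s_j^{-1}$ is a bounded relation). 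If $s_j \in \cS_C$, then $\omega(a) s_j =_G ({}^a s_j)\, \omega(a)$ where ${}^a s_j \in U_C$; since $\cS_C$ is compact and $a$ is fixed, $\{{}^a s : s \in \cS_C\}$ is a fixed compact subset of $U_C$, so each ${}^a s_j$ has bounded word length in $\cS_C$, and again $\omega(a) s_j ({}^a s_j)^{-1} \omega(a)^{-1}$ is a relation of bounded length staying in a bounded neighborhood of the identity in $G$, so it homotopes to $\varepsilon$ by the fifth item of Lemma~\ref{lemma:big}. Repeating $m$ times, and noting $m = \ell(\overline{x}) = O(\ell(\overline{x}))$, we get a homotopy from $\omega(a)\,\overline{x}$ to $\overline{({}^a s_1)}\,\overline{({}^a s_2)}\cdots\overline{({}^a s_m)}\,\omega(a)$ where each $\overline{({}^a s_j)}$ is a fixed bounded word. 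But the concatenation $({}^a s_1)({}^a s_2)\cdots({}^a s_m)$ equals ${}^a x$ in $U_C$, so by Proposition~\ref{proposition:tamelip1c} applied inside the tame group $G_C$ (together with Lemma~\ref{lemma:big}) we may homotope $\overline{({}^a s_1)}\cdots\overline{({}^a s_m)}$ to $\overline{{}^a x}$, with the right length bound since $\ell(\overline{{}^a x}) = O(\log|{}^a x|_{\cS_C}) = O(\log|x|_{\cS_C}) = O(m)$.

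I would then assemble these pieces: first reduce to $a \in \cS_A$ by processing the letters of $\omega(a)$ one at a time (using the abelian structure of $A$ and Lemma~\ref{lemma:big}, third item); then reduce to $K=1$ by the induction above; then prove the $K=1, a\in\cS_A$ case by the letter-by-letter commutation argument of the previous paragraph. The main obstacle I anticipate is purely in the length bookkeeping: one must check at each stage that every intermediate word has length $O(\ell(\overline{x_1}\ldots\overline{x_K}))$ and every auxiliary homotopy has Lipschitz constant within the same bound, which requires knowing (i) that $\omega(a)$ has bounded length for $a \in \cS_A$ — immediate from minimality — and (ii) that conjugation by a fixed $a$ distorts $\cS_C$-length in $U_C$ by at most a constant, so that the final application of Proposition~\ref{proposition:shortcuts} gives $\ell(\overline{{}^a x_i}) = O(\ell(\overline{x_i}))$; since conjugation by a fixed element of any Lie group is bilipschitz on the word metric, (ii) holds. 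No genuinely new geometric input is needed beyond Lemma~\ref{lemma:big}, Proposition~\ref{proposition:tamelip1c}, and Proposition~\ref{proposition:shortcuts}; the content is entirely combinatorial.
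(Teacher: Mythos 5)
Your overall strategy --- commute $\omega(a)$ past the $\overline{x_i}$'s and then invoke the Lipschitz $1$-connectedness of the tame group $G_{C}$ (Proposition~\ref{proposition:tamelip1c}) to collapse the result into the normal form $\overline{{}^ax_i}$ --- is in the same spirit as the paper's proof, and you correctly identify Proposition~\ref{proposition:tamelip1c} and Proposition~\ref{proposition:shortcuts} as the necessary ingredients. However, your proposed execution has a genuine gap in the Lipschitz bookkeeping, and it is precisely the gap that the paper's more economical argument is designed to avoid.

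The problem is the letter-by-letter reduction. You propose to reduce to $a\in\cS_A$ by processing the letters of $\omega(a)$ one at a time, and then to commute a single generator past $\overline{x}=s_1\cdots s_m$ one letter at a time, in both cases invoking the transitivity rule (third item of Lemma~\ref{lemma:big}). But the proof of that rule stacks two rescaled homotopies in $[0,1]\times[0,\frac12]$ and $[0,1]\times[\frac12,1]$; each rescaling halves the height and hence doubles the vertical Lipschitz constant, and more generally composing $n$ homotopies this way multiplies the constant by a factor of order $n$. Here $n$ is $\ell(\omega(a))$ in the first reduction and $\ell(\overline{x})$ in the second, both unbounded, so the composed homotopy is only $O(\ell^2)$-Lipschitz where $\ell=\ell(\omega(a)\overline{x_1}\cdots\overline{x_K})$. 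That gives a quadratic isoperimetric bound, not Lipschitz $1$-connectedness. (The ``staircase'' idea of arranging the $m$ local moves diagonally could in principle save the second reduction, but that is not what items $3$ or $5$ of Lemma~\ref{lemma:big} deliver, and the first reduction is worse because each substep --- commuting one generator $a_j$ past all of $\overline{x_1}\cdots\overline{x_K}$ --- is a global, not local, move.) A secondary issue: you appeal to ``conjugation by a fixed element is bilipschitz,'' but $a$ ranges over all of $A$, so the bilipschitz constant is not uniform; this is exactly why you wanted to reduce to $a\in\cS_A$, and that reduction is where the cost blows up.

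The paper sidesteps all of this with a fixed number of compositions. After inserting $K-1$ copies of $\omega(a)^{-1}\omega(a)$ (a bounded number, since $K$ is fixed), one observes that $\omega(a)\overline{x_i}\omega(a)^{-1}$ and $\overline{{}^ax_i}$ are both words in $(\cS_A\cup\cS_{C_i})^*$ representing the element ${}^ax_i$ of the tame group $G_{C_i}$; since $G_{C_i}$ is Lipschitz $1$-connected by Proposition~\ref{proposition:tamelip1c}, the homotopy from one to the other is produced in a single step, with no per-letter iteration. The length control $\ell(\overline{{}^ax_i})=O(\ell(\omega(a))+\ell(\overline{x_i}))$ follows from minimality and the triangle inequality, so the application of Proposition~\ref{proposition:tamelip1c} yields an $O(\ell)$-Lipschitz filling, and only $K$ compositions are ever performed.
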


\begin{proof}
$$\omega(a)\overline{x_1}\ldots\overline{x_K}\leadsto
(\omega(a)\overline{x_1}\omega(a)^{-1})(\omega(a)\overline{x_2}\omega(a)^{-1})\ldots(\omega(a)\overline{x_K}\omega(a)^{-1})\omega(a)$$
$$\leadsto
\overline{^ax_1}\ldots\overline{^ax_K}\omega(a),
$$
by Proposition \ref{proposition:tamelip1c}.
\end{proof}

We now conclude the proof of our main theorem by showing that we may fill $\omega$-triangles.

\begin{lemma}
\label{lemma:omegatri}
Under our standing assumptions, if $g_1,g_2,g_3\in G$ with $g_1g_2g_3=_G 1_G$, we have
$$\omega(g_1)\omega(g_2)\omega(g_3)\leadsto \varepsilon.$$
\end{lemma}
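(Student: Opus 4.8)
The strategy is to decompose an $\omega$-triangle into pieces that can each be filled by the tools already assembled, and to do so by passing all the abelian parts to one side. Recall that an $\omega$-triangle has the form $\omega(g_1)\omega(g_2)\omega(g_3)$ with $g_1g_2g_3 =_G 1_G$, and that by Lemma \ref{lemma:omega} each $\omega(g_i)$ factors as $\omega(\phi_A(g_i))\omega(\phi_U(g_i))$, where $\omega(\phi_A(g_i)) =: \omega(a_i) \in \cS_A^*$ and $\omega(\phi_U(g_i))$ has the form $\overline{x_{i1}}\ldots\overline{x_{ik}}$ with $x_{ij}\in U_{C_j}$ for the fixed sequence $C_1,\ldots,C_k$ coming from Lemma \ref{lemma:omega}. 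So the triangle reads
$$\omega(a_1)\,\overline{x_{11}}\ldots\overline{x_{1k}}\,\omega(a_2)\,\overline{x_{21}}\ldots\overline{x_{2k}}\,\omega(a_3)\,\overline{x_{31}}\ldots\overline{x_{3k}}.$$

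\textbf{First step: collect the abelian letters.} I would use Proposition \ref{proposition:conjugation} repeatedly to push $\omega(a_2)$ and then $\omega(a_3)$ leftward (or the unipotent blocks rightward) past the intervening $\overline{x}$-blocks, at the cost of conjugating those blocks by the corresponding element of $A$. Since $\Ad(a)$ preserves each weight space $\mfua$, it preserves each $U_{C_j}$, so $\,^a x_{ij}$ again lies in $U_{C_j}$ and $\overline{^a x_{ij}}$ is again an admissible block of the same shape; the length is controlled because $a$ ranges over a bounded set of group elements (the $\omega(a_i)$ have length $O(|g|_\cS)$ but conjugation by a fixed $a$ changes word length in $U_{C_j}$ only polynomially, hence changes $|\cdot|_{\cS}$ by a bounded additive amount by Proposition \ref{proposition:shortcuts}). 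Carrying this out moves the word to the form
$$\overline{y_{11}}\ldots\overline{y_{1k}}\,\overline{y_{21}}\ldots\overline{y_{2k}}\,\overline{y_{31}}\ldots\overline{y_{3k}}\,\omega(a_1 a_2 a_3),$$
where $y_{ij}$ is an appropriate $A$-conjugate of $x_{ij}$ and still lies in $U_{C_j}$. Because $g_1g_2g_3=1_G$ and the $A$-part of this product is $a_1a_2a_3$, projecting to $A$ forces $a_1a_2a_3 = 1_A$, so $\omega(a_1a_2a_3)$ is the empty word (it is the minimal-length representative of the identity) and this tail vanishes; applying $\phi_U$ to $g_1g_2g_3=1_G$ shows the remaining $3k$ unipotent blocks multiply to $1_U$. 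Now Corollary \ref{corollary:factoring} applies directly with $K = 3k$ and the concatenated sequence of conic subsets, giving $\overline{y_{11}}\ldots\overline{y_{3k}}\leadsto\varepsilon$. Composing the homotopies via Lemma \ref{lemma:big} yields $\omega(g_1)\omega(g_2)\omega(g_3)\leadsto\varepsilon$.

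\textbf{Main obstacle.} The delicate point is the length bookkeeping in the first step: when I conjugate $\overline{x_{ij}}$ by $\omega(a_1a_2)$ or $\omega(a_1)$, the resulting word must still have length $O(\ell(\text{triangle}))$, and $\omega(a_1)$ itself can be as long as $O(|g_1|_\cS)$ — not bounded. The point is that conjugation by an element of $A$ acts on $U_{C_j}$ by $\Ad$, which is an automorphism of $U_{C_j}$; even though $|\,^a x|_{\cS_{C_j}}$ may grow, it grows only polynomially in $|x|_{\cS_{C_j}}\cdot e^{O(|a|_{\cS_A})}$, and after taking logs (Proposition \ref{proposition:shortcuts}) the block length $\ell(\overline{^a x})$ grows by only $O(|a|_{\cS_A}) = O(\ell(\text{triangle}))$. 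One must check that the homotopy cost of the conjugation move in Proposition \ref{proposition:conjugation}, applied with $a = a_1a_2$ etc., is likewise $O(\ell(\text{triangle}))$ rather than exponential — this is exactly what that proposition is stated to give (its proof goes through Proposition \ref{proposition:tamelip1c}, which handles conjugation by abelian elements inside a tame group with only polynomial-in-word-length cost, hence linear cost after passing to $\cS$). Once these estimates are in place, assembling the homotopy is a routine application of the composition and concatenation rules of Lemma \ref{lemma:big}.
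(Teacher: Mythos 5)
Your proposal matches the paper's proof of Lemma \ref{lemma:omegatri} essentially step for step: expand each $\omega(g_i)$ as $\omega(a_i)\,\overline{x_{i1}}\ldots\overline{x_{ik}}$, use Proposition \ref{proposition:conjugation} to slide the $\cS_A^*$-words to one end (conjugating each block within its own $U_{C_j}$, with the length control you describe coming from Proposition \ref{proposition:shortcuts}), observe the resulting abelian tail is a relation in the Euclidean group $A$ and so can be discarded, and finish by applying Corollary \ref{corollary:factoring} to the fixed sequence $C_1,\ldots,C_k,C_1,\ldots,C_k,C_1,\ldots,C_k$ of length $3k$. The only cosmetic imprecision is that the tail produced by the sliding is $\omega(a_1)\omega(a_2)\omega(a_3)$ rather than the single word $\omega(a_1a_2a_3)$, but since $a_1a_2a_3=1_A$ this is a bounded-length-per-prefix relation in $A$ and Lemma \ref{lemma:big} fills it, so your argument goes through.
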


\begin{proof}
Recall that $\omega(g)$ has the form $\omega(a)\overline{x_1}\ldots\overline{x_k}$ where $\overline{u}_i\in\cS_A\cup\cS_{C_i}$ for some fixed sequence $C_1,\ldots C_k$ of conical subsets. Let $g_1,g_2,g_3\in G$ with $g_1g_2g_3=1_G$, and let $a_i=\phi_A(g_i)$ for $i=1,2,3$. Let $\omega(\phi_U(g_1))=\overline{x_1}\ldots\overline{x_k}$, $\omega(\phi_U(g_2))=\overline{x'_1}
\ldots\overline{x'_k}$ and $\omega(\phi_U(g_3))=\overline{x''_1}\ldots\overline{x''_k}$. Expanding and applying Proposition \ref{proposition:conjugation} repeatedly, we slide the $a$-words to the right to see that
$$\omega(g_1)\omega(g_2)\omega(g_3)
=\omega(a_1)\overline{x_1}\ldots\overline{x_k}
\omega(a_2)\overline{x'_1}\ldots\overline{x'_k}
\omega(a_3)\overline{x''_1}\ldots\overline{x''_k}$$
$$\leadsto
\overline{^{a_1}x_1}\ldots\overline{^{a_1}x_k}
\overline{^{a_2a_1}x'}_1\ldots\overline{^{a_2a_1}x'_k}
\overline{^{a_3a_2a_1}x''}_1\ldots\overline{^{a_3a_2a_1}x''_k}
\omega(a_1)\omega(a_2)\omega(a_3)$$
$$\leadsto
\overline{^{a_1}x_1}\ldots\overline{^{a_1}x_k}
\overline{^{a_2a_1}x'}_1\ldots\overline{^{a_2a_1}x'_k}
\overline{^{a_3a_2a_1}x''}_1\ldots\overline{^{a_3a_2a_1}x''_k}.
$$
This resulting word admits a Lipschitz filling by Corollary \ref{corollary:factoring}.
\end{proof}

\bibliographystyle{plain}
\bibliography{bibliography}

\noindent
David Bruce Cohen\\
Department of Mathematics\\
University of Chicago\\
5734 S. University Avenue,\\
Room 208C\\
Chicago, Illinois 60637\\
E-mail: {\tt davidbrucecohen@gmail.com}\\

\end{document}